\documentclass[12pt,a4paper,reqno]{amsart}

\usepackage{amsfonts,amsthm,amsmath,amssymb,amscd,mathrsfs}
\usepackage{esint}
\usepackage{indentfirst}
\usepackage{cite}
\usepackage{bm,xcolor}
\usepackage{enumitem}
\usepackage{float}
\usepackage{mathtools}
\usepackage[hidelinks]{hyperref} 
\usepackage{graphicx}
\usepackage{accents}
\usepackage{tikz}
\usetikzlibrary{arrows.meta,decorations.markings}
\usepackage{caption}

\definecolor{contour0}{RGB}{68,1,84}
\definecolor{contour1}{RGB}{72,24,106}
\definecolor{contour2}{RGB}{71,45,123}
\definecolor{contour3}{RGB}{66,64,134}
\definecolor{contour4}{RGB}{59,82,139}
\definecolor{contour5}{RGB}{51,99,141}
\definecolor{contour6}{RGB}{44,114,142}
\definecolor{contour7}{RGB}{38,130,142}
\definecolor{contour8}{RGB}{33,145,140}
\definecolor{contour9}{RGB}{31,160,136}
\definecolor{contour10}{RGB}{40,174,128}
\definecolor{contour11}{RGB}{63,188,115}
\definecolor{contour12}{RGB}{94,201,98}
\definecolor{contour13}{RGB}{132,212,75}
\definecolor{contour14}{RGB}{173,220,48}
\definecolor{contour15}{RGB}{216,226,25}
\definecolor{contour16}{RGB}{253,231,37}

\theoremstyle{plain} 
\newtheorem{theorem}{Theorem}[section]
\newtheorem{remark}{Remark}[section]
\newtheorem{lemma}[theorem]{Lemma}
\newtheorem{prop}[theorem]{Proposition}
\newtheorem{coro}[theorem]{Corollary}

\theoremstyle{definition}
\newtheorem{definition}{Definition}[section]
\newtheorem{example}[definition]{Example}

\def\mbN{\mathbb{N}}
\def\mbR{\mathbb{R}}
\def\mbS{\mathbb{S}}
\def\mbT{\mathbb{T}}
\def\mbZ{\mathbb{Z}}

\def\be{\mathbf{e}}
\def\bg{\mathbf{g}}

\def\bv{\mathbf{v}}
\def\bw{\mathbf{w}}

\def\bF{\mathbf{F}}

\def\mcD{\mathcal{D}}
\def\mcH{\mathcal{H}}

\def\mcL{\mathcal{L}}
\def\mcN{\mathcal{N}}
\def\mcR{\mathcal{R}}

\def\mcT{\mathcal{T}}

\def\msN{\mathscr{N}}
\def\msR{\mathscr{R}}

\def\Div{{\rm div}}

\DeclareMathOperator{\sgn}{sgn}

\DeclareMathOperator{\dist}{dist}

\newcommand{\Rn}[1]{\expandafter{\romannumeral #1\relax}}
\newcommand{\osc}{\operatorname{osc}}
\newcommand{\BMO}{\mathrm{BMO}}
\newcommand{\btau}{\boldsymbol{\tau}}
\newcommand{\bga}{\boldsymbol{\gamma}}

\begin{document}

\title[Pressure estimates and classifications for steady Euler flows]{Pressure estimates and classifications for steady $H^1_{\rm loc}$-solutions to two-dimensional incompressible Euler equations}

\author{Changfeng Gui}
\address{Department of Mathematics, Faculty of Science, University of Macau, Taipa, Macao.}
\email{changfenggui@um.edu.mo}

\author{Qinfeng Li}
\address{School of Mathematics, Hunan University, Changsha, P.R. China.}
\email{liqinfeng1989@gmail.com}

\author{Chunjing Xie}
\address{School of Mathematical Sciences,  Ministry of Education Key Laboratory of Scientific and Engineering Computing, CMA-Shanghai, Shanghai Jiao Tong University, Shanghai 200240, China.}
\email{cjxie@sjtu.edu.cn}

\author{Huan Xu}
\address{Department of Mathematics, Faculty of Science, University of Macau, Taipa, Macao.}
\email{hzx0016@auburn.edu}

\date{}

\begin{abstract}
We study two-dimensional steady incompressible Euler flows with finite total curvature. A distinguished family of such flows arises from finite-Morse-index solutions to semilinear elliptic equations in two dimensions. We establish three main properties of finite-total-curvature flows in the $L^\infty\cap H^1_{\rm loc}$ class. First, in the plane and the half-plane, every such flow without stagnation points is a parallel shear flow. Second, in the plane, the half-plane, and the infinite strip, the pressure converges uniformly to a constant at each end at infinity. Third, in the plane, the half-plane, the infinite strip, and the periodic strip, we establish sharp lower bounds for the total curvature in terms of the pressure oscillation.

The rigidity result in fact extends to a broader class of flows: in the plane and the half-plane, any steady Euler flow without stagnation points is a parallel shear flow provided that $\nabla P\in L^q$ for some $1\le q\le 2$. Combined with standard elliptic estimates for the pressure, this result yields a concise new proof of Hamel and Nadirashvili's rigidity theorems \cite{HN_CPAM_2017,HN_ARMA_2019}, with the regularity assumption sharpened optimally to $H^1_{\rm loc}$.

The pressure serves as a unifying quantity throughout the analysis.
\end{abstract}

\subjclass{35Q31, 35B53}

\keywords{Incompressible Euler equations, steady solutions, rigidity, total curvature, pressure oscillation.}

\maketitle



\section{Introduction}

This paper studies the two-dimensional steady incompressible Euler equations
\begin{eqnarray}\label{Euler}
\left\{\begin{aligned}
&\bv\cdot\nabla\bv+\nabla P=0, & \hbox{in } \Omega, \\
&\Div\,\bv=0, & \hbox{in } \Omega,
\end{aligned}\right.
\end{eqnarray}
where $\bv=(v_1,v_2)$ is the velocity field and $P$ is the pressure.
The domain $\Omega\subset\mbR^2$ is one of the following basic domains:
\[
\mbR^2,\quad
\mbR^2_+:=\mbR\times(0,\infty),\quad
\Omega_\infty:=\mbR\times(-1,1),\quad
\Omega_{\rm per}:=\mbT\times(-1,1),
\]
where $\mbT:=\mbR/2\pi\mbZ$.
On domains with horizontal flat boundaries, we impose the {\it slip boundary condition}
\begin{equation}\label{SBC}
v_2=0 \quad \hbox{on } \partial\Omega.
\end{equation}
We impose only local $H^1$-regularity on the solution throughout, so the above boundary condition is understood in the sense of trace.

The vorticity form of \eqref{Euler}, written solely in terms of $\bv$, reads
\begin{equation}\label{divergence-form Euler}
\Div(v_1\nabla v_2-v_2\nabla v_1)=0.    
\end{equation}
Using this equation, the authors of \cite{GXX_CMP_2026} classified steady Euler flows in $\mbR^2$, $\mbR^2_+$, and $\Omega_\infty$ in terms of the velocity directions by estimating the {\it total curvature}
\begin{align*}
\mcT(\bv,\Omega):=\int_{\Omega}\frac{|v_1\nabla v_2-v_2\nabla v_1|^2}{|\bv|^2}\,dx,
\end{align*}
where the integrand is defined to be zero on the stagnation set $\{|\bv|=0\}$.
Total curvature arises naturally from energy estimates for \eqref{divergence-form Euler}, and it is particularly well-suited for studying rigidity, as a flow is parallel shear precisely when its total curvature vanishes.
Here, $\bv$ is called a parallel shear flow if there exist a unit vector $\be$ and a scalar function $V$ such that
\[
\bv(x)=V(x\cdot \be^\perp)\be.
\]
In the half-plane or the strip with the slip boundary condition \eqref{SBC}, a parallel shear flow necessarily has the form
\[
\bv(x_1,x_2)=(V(x_2),0).
\]

This work studies the class of flows with finite total curvature.
This class contains important flows generated by semilinear elliptic equations. 
Indeed, if $u$ solves a semilinear elliptic equation
\begin{equation}\label{slee}
\Delta u=F'(u) \quad \hbox{in } \mbR^2,    
\end{equation}
and
\[
\bv=\nabla^\perp u, \quad P=F(u)-\frac12|\nabla u|^2,
\]
then $(\bv,P)$ is a solution to \eqref{Euler}.
It is well-known that finite-Morse-index solutions to \eqref{slee} generate finite-total-curvature flows (see, e.g., \cite{WW_CPAM_2019} and the references therein).
In particular, finite-Morse-index solutions of the Allen--Cahn equation \[
\Delta u=u^3-u
\]
have been studied extensively in \cite{DKP_TAMS_2013,DKPW_JFA_2010,Gui_JDE_2012,KLP_NHM_2012,KLPW_CVPDE_2015,Wang_NoDEA_2017,WW_CPAM_2019}, providing a distinguished family of examples of finite-total-curvature flows.

We are interested in qualitative and quantitative properties of finite-total-curvature flows, with a particular focus on rigidity, far-field asymptotics, and sharp lower bounds on the total curvature.
Throughout our analysis, pressure serves as the central quantity.

Regarding rigidity, most existing results are naturally formulated in terms of the velocity
\cite{DER_CMP_2026,EFR_arXiv_2025,HN_CPAM_2017,HN_ARMA_2019,HN_JEMS_2023,LLSX_Poincare_2026,Ruiz_ARMA_2023,WZ_arXiv_2023}, the stream function \cite{EHSX_Duke_2026}, the vorticity \cite{CEW_ARMA_2023,GPSY_Duke_2021,LZ_ARMA_2011}, or the geometry of streamlines \cite{DN_CPAM_2026,GXX_arXiv_2026}. A representative result of Hamel and Nadirashvili states that a steady Euler flow in the plane, the half-plane, or the strip, with a positive lower bound on its speed must be a parallel shear flow \cite{HN_CPAM_2017,HN_ARMA_2019}.
The strictly positive lower bound assumption cannot be removed \cite{DR_CVPDE_2025}.

We show that any bounded finite-total-curvature flow in the plane or the half-plane, without essential stagnation points, must be a parallel shear flow. This rigidity result in fact extends to a broader class of flows. 
Indeed, by the Euler system \eqref{Euler}, the total curvature admits the reformulation
\begin{align*}
\mcT(\bv,\Omega)=\int_{\Omega}\frac{|\nabla P|^2}{|\bv|^2}\,dx.
\end{align*}
Consequently, any bounded flow with finite total curvature satisfies $\nabla P\in L^2$.
We can establish a stronger rigidity theorem that any steady Euler flow in the plane or the half-plane without essential stagnation points must be a parallel shear flow, provided that $\nabla P\in L^q$ for some $q\in[1,2]$.
This result requires neither boundedness of $|\bv|$ nor a uniform positive lower bound on $|\bv|$.
However, when the two-sided positive bounds on $|\bv|$ are imposed, standard elliptic estimates yield $\nabla P\in L^2$.  
Thus, we obtain a concise new proof of Hamel and Nadirashvili's rigidity results for merely $H^1_{\rm loc}$ solutions. Remarkably, this local $H^1$ regularity requirement is sharp: the Euler flow $\bv=\frac{x^\perp}{|x|}$ belongs to $W^{1,q}_{\rm loc}$ for every $q\in[1,2)$, has constant magnitude $1$, yet is not a parallel shear flow.

We turn next to the far-field asymptotic behavior of finite-total-curvature flows in $\mbR^2$, $\mbR^2_+$, and $\Omega_\infty$. 
Due to the presence of stagnation points, obtaining the asymptotic behavior of the velocity field seems formidable. 
We instead focus on the pressure and show that it converges uniformly to a constant at each end at infinity.

Lastly, we study lower bound estimates for the total curvature.
This problem was initiated in \cite{GXX_CMP_2026} for steady flows in the half-plane and the infinite strip, where bounded Euler flows are classified into shears and flows whose velocity directions form the full circle or a closed semicircle. 
The classification relies on a sharp lower bound of the total curvature involving a boundary quantity, with equality attained precisely by shears and flows whose set of directions is a closed semicircle.
The latter thus forms the least nontrivial class, further studied in \cite{GRXX_ARMA_2026}.
However, since this lower bound depends on the far‑field asymptotics of the horizontal velocity trace, it admits no natural analogue in domains like the whole plane or the periodic strip. 
Our main quantitative advance is to replace that boundary‑dependent term with the pressure oscillation
\[
\osc_\Omega P:=\sup_\Omega P-\inf_\Omega P.
\]
This substitution is natural, since zero total curvature and constant pressure each characterize a parallel shear flow.
Thus, both the total curvature and pressure oscillation measure the deviation of a steady Euler flow from the shear regime.
The resulting estimates are more intrinsic to the Euler flow and apply to all geometries considered here.

\subsection{Main results}
The first main result, stated below, concerns the qualitative and quantitative behavior of finite-total-curvature flows.

\begin{theorem}\label{thm_FTC flow}
Let $\Omega\in \{ \mbR^2, \mbR_+^2, \Omega_\infty, \Omega_{\rm per} \}$.
Let $\bv\in H^1_{\textup{loc}}(\Bar{\Omega})\cap L^\infty(\Omega)$ and $P\in H^1_{\textup{loc}}(\Bar{\Omega})$ solve \eqref{Euler}.
Except for $\Omega=\mbR^2$, the flow is supplemented with the slip boundary condition \eqref{SBC} in the sense of trace.
If the total curvature is finite, then the following statements hold.
\begin{enumerate}[label=\textup{(\roman*)}]
\item For $\Omega\in \{ \mbR^2, \mbR_+^2\}$, assume that $\bv$ has no essential stagnation points in $\Bar{\Omega}$ in the following sense: for any $K\Subset\Bar{\Omega}$, there exists $c_K>0$ such that $|\bv|\ge c_K$ a.e. in $K$. Then $\bv$ is a shear flow.

\item The pressure $P$ belongs to $C(\Bar{\Omega})\cap L^\infty(\Omega)$. Moreover, for $\Omega\in \{ \mbR^2, \mbR_+^2\}$, there exists a constant $P_\infty$ such that
\[
\lim_{|x|\to\infty}P(x)=P_\infty.
\]
For $\Omega=\Omega_\infty$, there exist constants $P_+$ and $P_-$ such that
\[
\lim_{x_1\to\pm\infty}P(x)=P_\pm \quad \hbox{ uniformly in  } x_2\in[-1,1].
\]

\item It holds that
\begin{align}\label{ineq_sharp total curvature}
\mcT(\bv,\Omega) \ge c_{\Omega} \cdot \osc_{\Omega}P,
\end{align}
where
\[
c_\Omega=
\begin{cases}
2\pi, &\text{if} \ \Omega=\mbR^2 \ or \ \Omega_{\rm per},\\
\pi, &\text{if} \ \Omega=\mbR^2_+ \ or \ \Omega_\infty.
\end{cases}
\]
Moreover, there exists a nonshear steady Euler flow, satisfying all the assumptions of the theorem, for which equality in \eqref{ineq_sharp total curvature} is attained.
\end{enumerate}
\end{theorem}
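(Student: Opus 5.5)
The plan is to work throughout with the pressure rather than the velocity. The starting point is the identity $\nabla P=-\bv\cdot\nabla\bv$, which in two dimensions gives $\nabla P=\omega\,\bv^\perp-\tfrac12\nabla|\bv|^2$. Combined with the divergence form \eqref{divergence-form Euler}, this shows that $|\nabla P|\,|\bv|\ge|\bv\cdot\nabla P^\perp|$ a.e. Since $\bv\in L^\infty$, finite total curvature gives $\nabla P\in L^2(\Omega)$ with $\|\nabla P\|_{L^2}^2\le\|\bv\|_\infty^2\,\mcT$. The key structural fact I would establish first is that $P$ is constant along streamlines in a weak sense. Equivalently, the level sets of $P$ are transported: $\bv\cdot\nabla P=0$ a.e. To see this, note that $\bv\cdot\nabla P=-\bv\cdot(\bv\cdot\nabla\bv)=-\tfrac12\bv\cdot\nabla|\bv|^2$, so on its own it is not zero. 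The right object is instead the Bernoulli-type structure in which $P+\tfrac12|\bv|^2$ is constant along streamlines. I would therefore work with $H:=P+\tfrac12|\bv|^2$, which satisfies $\nabla H=\omega\bv^\perp$, so that $\nabla H\parallel\bv^\perp$ and $|\nabla P|^2/|\bv|^2$ controls the angular part of $\nabla\bv$.

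For (i), I would treat the more general statement $\nabla P\in L^q$, $q\in[1,2]$, with no essential stagnation. On each compact set, $|\bv|\ge c_K$ allows writing $\bv=|\bv|(\cos\theta,\sin\theta)$ with $\theta\in H^1_{\rm loc}$; simple connectivity of $\mbR^2$ and $\mbR^2_+$ makes $\theta$ single valued. The equation \eqref{divergence-form Euler} becomes $\Div(|\bv|^2\nabla\theta)=0$, and the total curvature equals $\int|\bv|^2|\nabla\theta|^2$. I would test this equation with $\theta\,\eta_R^2$, where $\eta_R$ is a logarithmic cutoff on the annulus $R<|x|<R^2$. The relation $|\bv|^2|\nabla\theta|=|\nabla P|$ gives $\int|\bv|^2|\nabla\theta|^2\eta_R^2\le C\big(\int_{\rm ann}|\nabla P|\,|\theta|\,|\nabla\eta_R|\big)$. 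Bounding $\theta$ requires care, because $\theta$ is not a priori bounded. I would replace $\theta$ by the truncation $\min(\max(\theta-\bar\theta,-M),M)$, or use the bounded test function $\sin(\theta-\theta_0)$ applied to the identity $\Div(|\bv|^2\nabla\theta)=0$. For $q\le2$, Hölder together with the log-cutoff estimate $\|\nabla\eta_R\|_{L^{q'}}\to0$ gives the needed decay, since $q'\ge2$ and the log cutoff has $\|\nabla\eta_R\|_{L^2}\sim(\log R)^{-1/2}$. This yields $\nabla\theta=0$, so $\theta$ is constant and the flow is a parallel shear flow. In the half-plane, the slip condition eliminates the boundary term, because $\theta\in\pi\mbZ$ on $\partial\Omega$.

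For (ii), I would show continuity and the limits via a Wente/compensated-compactness argument. The derivative $\partial_iP=-\partial_j(v_iv_j)$ gives $\Delta P=-\partial_i\partial_j(v_iv_j)=2\det\nabla\bv$, using $\Div\bv=0$. This is a Jacobian structure, which puts $\Delta P$ in the Hardy space, so $P\in W^{2,1}_{\rm loc}\subset C$. Uniform convergence at infinity should come from $\nabla P\in L^2$ combined with an oscillation estimate on dyadic annuli: $\osc_{A_R}P\le C\|\nabla P\|_{L^2(\tilde A_R)}+(\text{Wente term})$, with both terms tending to zero. The strip is handled on unit squares.

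For (iii), I would use a coarea/level-set argument. For a.e. level $t$ between $\inf P$ and $\sup P$, the level curve $\{P=t\}$ separates the domain, and on it the total curvature density integrates to at least $c_\Omega$ per unit of $t$. Along each such curve, $\bv$ turns by angle $\ge2\pi$ in $\mbR^2$ and $\Omega_{\rm per}$, and by $\ge\pi$ when a boundary is present. Cauchy–Schwarz in the form $\int|\nabla P|^2/|\bv|^2\ge\int_{\inf P}^{\sup P}\!\int_{\{P=t\}}|\nabla P|/|\bv|^2\,ds\,dt$, together with the identity that $|\nabla P|/|\bv|^2$ equals the turning rate of the direction along level curves, gives \eqref{ineq_sharp total curvature}. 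The equality example would be a radially symmetric vortex with a suitable profile.

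The main obstacle is the turning lemma in (iii): proving rigorously, for merely $H^1$ data with stagnation points, that the level sets of $P$ are closed curves on which the direction of $\bv$ winds fully.
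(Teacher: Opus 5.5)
Your argument for (i) is sound, and it is the dual of the paper's. You test $\Div(|\bv|^2\nabla\theta)=0$ with the truncated phase times a logarithmic cutoff and use $|\bv|^2|\nabla\theta|=|\nabla P|$ to get $\int_{B_R}|\bv|^2|\nabla\theta|^2\mathbf 1_{\{|\theta-\bar\theta|<M\}}\,dx\le 2M\|\nabla P\|_{L^q}\|\nabla\eta_R\|_{L^{q'}}\to0$. The paper instead applies the weak maximum principle to $\Div(|\bv|^{-2}\nabla P)=0$ on circles where $\osc P\to0$. Both rest on the same borderline $q\le 2$. Discard the $\sin(\theta-\theta_0)$ variant, since $\cos(\theta-\theta_0)$ changes sign. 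In the half-plane, take $\bar\theta$ equal to the (constant) boundary value of $\theta$, or reflect.

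The genuine gap is in (ii). The splitting $\osc_{A_R}P\le C\|\nabla P\|_{L^2(\tilde A_R)}+(\text{Wente term})$ is fine. But the Wente term is only bounded by $\|\nabla v_1\|_{L^2(\tilde A_R)}\|\nabla v_2\|_{L^2(\tilde A_R)}$, and finite total curvature gives no control of $\nabla\bv$ at infinity: it controls $|\bv|^2|\nabla\theta|^2$, not $\nabla|\bv|$. For the Allen--Cahn saddle solution, a finite-total-curvature flow, $\nabla\bv=\nabla^2u$ is of order one along the axes, so this bound grows like $R$.

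In fact no argument using only $\bv\in L^\infty$, $\nabla P\in L^2$ and the Jacobian structure of $\Delta P$ can work. Take disjoint radial vortices centered at $x_k$, with $V=\epsilon_k$ on $r_k<|x-x_k|<r_k e^{\epsilon_k^{-2}}$ and $\sum_k\epsilon_k^2<\infty$. They have $\|\nabla P\|_{L^2}^2\approx 2\pi\sum_k\epsilon_k^2$, yet $\osc P\approx 1$ on every vortex (their total curvature is $\approx 2\pi$ each).

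The missing idea is the winding argument you postpone to (iii), used already on annuli. On a good level of $P$ the velocity does not vanish and its phase satisfies $\partial_s\theta=|\nabla P|/|\bv|^2>0$. So each closed level component contributes at least $2\pi$ to $\int_{P^{-1}(t)}|\nabla P|/|\bv|^2\,ds$. Levels outside $P(\partial A)$ consist only of such loops. This gives $\osc_A P\le\osc_{\partial A}P+\frac1{2\pi}\mcT(\bv,A)$ and $\dist(P(\Gamma_1),P(\Gamma_2))\le\frac1{2\pi}\mcT(\bv,A)$ on an annulus $A$ with boundary circles $\Gamma_1,\Gamma_2$. Combine these with radii $r_j\to\infty$ where $\osc_{\partial B_{r_j}}P\to0$, which $\nabla P\in L^2$ does supply. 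This yields boundedness of $P$ and the limit. The same argument with $\pi$ and wall-to-wall arcs handles $\mbR^2_+$ and $\Omega_\infty$.

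Part (iii) inherits this gap. That $\{P=t\}$ separates the domain does not make $\bv$ wind along it: in $\mbR^2$ an unbounded level component need not wind at all. You need (ii) to know that good levels $t\ne P_\infty$ have only compact components.

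In $\Omega_{\rm per}$ you assert $2\pi$, while also saying only $\pi$ is available when a boundary is present. A wall-to-wall arc does give only $\pi$, so you must produce a second component:
\begin{itemize}
\item an arc joining the two walls cannot separate the cylinder;
\item an arc with both ends on one wall where $v_1$ has the same sign already gives $2\pi$;
\item an arc with both ends on one wall where $v_1$ has opposite signs cannot be the whole level set. The wall Bernoulli law $P+\frac12 v_1^2=C$ makes the stagnation points on the two complementary wall arcs maximizers of $P$ on that wall, although one lies in $\{P<t\}$ and the other in $\{P>t\}$.
\end{itemize}

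Finally, a radial vortex gives equality only for $c_\Omega=2\pi$. For $\mbR^2_+$ and $\Omega_\infty$ you need flows whose good levels are single wall-to-wall arcs with phase increment exactly $\pi$, e.g.\ the saddle solution restricted to the half-plane and the De Regibus--Ruiz flows in the strip.
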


Since a bounded finite-total-curvature flow satisfies $\nabla P\in L^2$, the rigidity result in Theorem \ref{thm_FTC flow} will follow from the following stronger rigidity theorem.

\begin{theorem}\label{thm_new rigidity}
Suppose $\Omega=\mbR^2$ or $\mbR^2_+$. Let $\bv\in H^1_{\rm loc}(\Bar{\Omega})$ and $P\in H^1_{\rm loc}(\Bar{\Omega})$ solve \eqref{Euler}; in the half-plane, impose the slip boundary condition \eqref{SBC}. 
Suppose $\bv$ has no essential stagnation points in $\Bar{\Omega}$.
If $\nabla P\in L^q(\Omega)$ for some $q\in[1,2]$, then $P$ is constant; equivalently, $\bv$ is a parallel shear flow.
\end{theorem}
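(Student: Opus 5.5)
The plan is to reduce Theorem~\ref{thm_new rigidity} to a Liouville-type statement for a stream function of the divergence-free field $\bw:=v_1\nabla v_2-v_2\nabla v_1$ appearing in \eqref{divergence-form Euler}. For the half-plane, I first reflect: extend $v_1$ evenly and $v_2$ oddly in $x_2$, and $P$ evenly. Because of the slip condition \eqref{SBC}, the odd extension of $v_2$ stays in $H^1_{\rm loc}$. A standard check against test functions split into even and odd parts should show that the extended pair solves \eqref{Euler} weakly in $\mbR^2$. The extension still has no essential stagnation points, and $\nabla P\in L^q(\mbR^2)$. So it suffices to treat $\Omega=\mbR^2$.

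\textbf{Lifting and the stream function.} Since $|\bv|\ge c_K>0$ on compacts, $\bv/|\bv|\in H^1_{\rm loc}(\mbR^2;\mbS^1)$. On the simply connected plane it admits a lift $\theta\in H^1_{\rm loc}$ with $\bv=|\bv|(\cos\theta,\sin\theta)$ (a Bourgain--Brezis--Mironescu type lifting), and then $\bw=|\bv|^2\nabla\theta$. A pointwise computation from \eqref{Euler} gives $|\bw|=|\nabla P|$; this is the identity behind $\mcT=\int|\nabla P|^2/|\bv|^2$. Since $\Div\bw=0$, there is $\psi\in H^1_{\rm loc}$ with $\bw=\nabla^\perp\psi$ and $|\nabla\psi|=|\nabla P|\in L^q(\mbR^2)$. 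The condition $\operatorname{curl}\nabla\theta=0$ becomes
\[
\Div\Big(\frac{\nabla\psi}{|\bv|^2}\Big)=0 \quad\text{in } \mbR^2 .
\]
This equation is uniformly elliptic on every compact set, because $|\bv|^{-2}$ is bounded above on compacts by $c_K^{-2}$ and below by $\|\bv\|_{L^\infty(K)}^{-2}$. I expect to need either local boundedness of $\bv$, or De Giorgi theory with a possibly unbounded-below coefficient, to justify this. Granting it, De Giorgi--Nash--Moser gives that $\psi$ is continuous, and the weak maximum principle holds on every disk.

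\textbf{Oscillation decay and conclusion.} For a.e.\ $r$, $\psi|_{\partial B_r}\in W^{1,1}$. The maximum principle gives
\[
\osc_{B_r}\psi\le\osc_{\partial B_r}\psi\le\int_{\partial B_r}|\nabla\psi|\,ds .
\]
Averaging over $r\in(R,2R)$, using monotonicity of $r\mapsto\osc_{B_r}\psi$, and applying H\"older on the annulus $A_R=B_{2R}\setminus B_R$ yields
\[
\osc_{B_R}\psi\le \frac1R\int_{A_R}|\nabla\psi|\,dx\le C R^{1-2/q}\|\nabla P\|_{L^q(A_R)} .
\]
For $q<2$ the right side tends to $0$ because the exponent $1-2/q$ is negative. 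For $q=2$ the power is $R^0$, but the tail $\|\nabla P\|_{L^2(A_R)}\to0$. Either way $\psi$ is constant, so $\bw=0$ and $\nabla P=0$. Then $\nabla\theta=0$, the direction of $\bv$ is constant, and together with $\Div\bv=0$ this makes $\bv$ a parallel shear flow. In the half-plane case, the reflection symmetry forces the direction to be horizontal.

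\textbf{Main obstacle.} The hardest step will be the regularity bookkeeping for merely $H^1_{\rm loc}$ data. This includes the existence of the lift $\theta$ and the distributional identities $\bw=|\bv|^2\nabla\theta$ and $|\bw|=|\nabla P|$. It also includes the validity of the maximum principle for $\psi$, and the reflection argument in the half-plane. The oscillation estimate itself is short once these are in place.
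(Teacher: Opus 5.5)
This is essentially the paper's proof. The momentum equation gives $\nabla^\perp P=v_1\nabla v_2-v_2\nabla v_1=\bw$ exactly, not merely $|\bw|=|\nabla P|$, so your $\psi$ is $P$ up to a constant and your equation for $\psi$ is the paper's pressure equation (Lemma~\ref{lemma_P eq}); your averaging over $r\in(R,2R)$ is a tidy substitute for the paper's choice of good radii (Lemma~\ref{lemma_good radii}).

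The obstacle you flag does not arise. As in Proposition~\ref{prop_rigidity}, testing with $(t-\psi)_+\in H^1_0(B_r)$, $t=\min_{\partial B_r}\psi$, gives the weak maximum principle using only $|\bv|^{-2}\in L^\infty_{\rm loc}$ (no essential stagnation points) and $|\bv|^{-2}>0$ a.e. So you need neither local boundedness of $\bv$ nor De Giorgi--Nash--Moser, and essential oscillations suffice.
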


A few remarks are in order.

\begin{remark}
Theorem \ref{thm_new rigidity} requires neither $\bv\in L^\infty$ nor $\inf_\Omega|\bv|>0$.
\end{remark}

\begin{remark}
In an infinite strip, this rigidity result no longer holds. Indeed, De Regibus and Ruiz \cite{DR_CVPDE_2025} constructed bounded non-shear flows without stagnation points from monotone heteroclinic solutions of semilinear elliptic equations; these flows have finite total curvature \cite{GXX_CMP_2026}, and hence satisfy $\nabla P\in L^2$.
\end{remark}

\begin{remark}
It remains unclear whether the integrability exponent $q=2$ is optimal. See Remark \ref{remark_two is sharp} for further discussion.
\end{remark}

As a corollary of Theorem \ref{thm_new rigidity}, we recover Hamel--Nadirashvili's rigidity theorems \cite{HN_CPAM_2017,HN_ARMA_2019} with a new proof, and relax the regularity assumption from $C^2$ to the optimal $H^1$.

\begin{coro}\label{coro_HN rigidity}
Let $\Omega\in\{\mbR^2,\mbR^2_+,\Omega_{\infty}\}$.  
Suppose $\bv\in H^1_{\rm loc}(\Bar{\Omega})$ and $P\in H^1_{\rm loc}(\Bar{\Omega})$ solve \eqref{Euler}, together with the slip boundary condition \eqref{SBC} when a boundary is present. Assume that for some constants $0<m\le M<\infty$,
\begin{equation*}
 m\le |\bv(x)|\le M \quad \text{for a.e. } x\in\Omega.
\end{equation*}
Then $\bv$ is a parallel shear flow.
\end{coro}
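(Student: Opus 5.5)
The plan is to reduce Corollary \ref{coro_HN rigidity} to Theorem \ref{thm_new rigidity} when $\Omega\in\{\mbR^2,\mbR^2_+\}$, by showing that the two-sided bounds $m\le|\bv|\le M$ force $\nabla P\in L^2(\Omega)$. The strip $\Omega_\infty$ needs a separate argument, given at the end.

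Taking the divergence of the momentum equation gives
\[
-\Delta P=\Div\Div(\bv\otimes\bv)=\partial_i\partial_j(v_iv_j).
\]
On $\mbR^2_+$ the slip condition $v_2=0$ on the boundary yields a Neumann-type condition for $P$: the normal component of the momentum equation reads $\partial_2P=-\partial_j(v_jv_2)$. This is exactly what makes the weak formulation $\int\nabla P\cdot\nabla\varphi=\int v_iv_j\partial_i\partial_j\varphi$ valid for test functions $\varphi$ with $\partial_2\varphi=0$ on the boundary. Equivalently, one can reflect evenly ($v_1$ even, $v_2$ odd, $P$ even in $x_2$) and work in the whole plane. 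The right-hand side involves the bounded tensor $\bv\otimes\bv$, but boundedness alone gives only $P\in\BMO$, not $\nabla P\in L^2$. So an additional ingredient is required.

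That ingredient is the relation $\mcT(\bv,\Omega)=\int|\nabla P|^2/|\bv|^2\,dx$ combined with $|\bv|\le M$, which gives $\|\nabla P\|_{L^2}^2\le M^2\mcT$. It therefore suffices to bound the total curvature. The idea is to test the vorticity equation \eqref{divergence-form Euler} against a cutoff. Write $\bv=|\bv|(\cos\theta,\sin\theta)$ locally, which is possible because $|\bv|\ge m$; then $v_1\nabla v_2-v_2\nabla v_1=|\bv|^2\nabla\theta$ and \eqref{divergence-form Euler} becomes $\Div(|\bv|^2\nabla\theta)=0$. Since $\theta$ may be multivalued, test instead with the globally defined bounded quantities $\be\cdot\bv/|\bv|$, or use a local energy (Caccioppoli) inequality on balls $B_R$:
\[
\int_{B_R}|\bv|^2|\nabla\theta|^2\,dx\lesssim \frac{M^2}{m^2}\cdot\frac{1}{R^2}\int_{B_{2R}\setminus B_R}(\theta-\bar\theta)^2\,dx .
\]
In two dimensions the usual log-cutoff trick (as in Liouville theorems for $\Div(a\nabla\theta)=0$ with uniformly elliptic $a=|\bv|^2$) shows that bounded oscillation of the direction on dyadic annuli forces $\nabla\theta\equiv 0$. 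In fact this argument would prove rigidity directly, but the direction angle is not bounded globally, which is the classical difficulty.

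I therefore expect the main obstacle to be establishing $\nabla P\in L^2$, or at least $\nabla P\in L^q$ for some $q\le2$, from the two-sided speed bounds alone. The first route I would try is the following. The streamfunction $\psi$ with $\nabla^\perp\psi=\bv$ is a bi-Lipschitz-type map along level sets because $m\le|\nabla\psi|\le M$. Use the Bernoulli structure: $P+\tfrac12|\bv|^2$ is constant along streamlines. Then express $\nabla P$ through $\nabla\theta$ via $\nabla P=-|\bv|^2\,\partial_\tau\theta\,\bv^\perp/|\bv|$ (centripetal acceleration) together with the tangential derivative of the speed, and apply the Caccioppoli estimate with the logarithmic cutoff $\eta(x)=\log(R^2/|x|)/\log R$ on $R\le|x|\le R^2$. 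If the direction oscillation on annuli can be controlled using $|\bv|\ge m$ (no stagnation implies the degree of $\bv/|\bv|$ on circles is zero in simply connected domains), this would yield $\mcT<\infty$, after which Theorem \ref{thm_new rigidity} applies.

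For $\Omega_\infty$, Theorem \ref{thm_new rigidity} is unavailable. There I would argue that in a strip $|\bv|\ge m$ forces $v_1$ to have a fixed sign, since the flux $\int_{-1}^1v_1\,dx_2$ is constant in $x_1$ and streamlines must traverse the strip. This makes $\psi$ strictly monotone in $x_2$, so $\Delta\psi=f(\psi)$ holds on the whole strip. Rigidity then follows by a sliding/moving-plane argument in $x_1$, comparing $\psi(\cdot+h,\cdot)$ with $\psi$, using that $\psi$ is constant on each boundary line.
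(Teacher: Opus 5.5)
\textbf{There is a genuine gap.} You never establish $\nabla P\in L^2$, which is the step you yourself flag as the main obstacle. Your proposed route runs a Caccioppoli (or log-cutoff) estimate on the phase $\theta$ in $\Div(|\bv|^2\nabla\theta)=0$. It stalls for exactly the reason you give: there is no a priori control of the oscillation of $\theta$ on large annuli. The fact that $\bv/|\bv|$ has degree zero on circles says nothing about that oscillation.

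The missing idea is to run the same Caccioppoli estimate on the conjugate quantity $P$ instead of on $\theta$. By Lemma \ref{lemma_P eq}, $\nabla P=-|\bv|^2\nabla^\perp\theta$, so $P$ solves $\Div(|\bv|^{-2}\nabla P)=0$ with a coefficient lying between $M^{-2}$ and $m^{-2}$. Unlike $\theta$, $P$ does have global mean-oscillation control. You already noted that $P\in\BMO(\mbR^2)$, from $-\nabla P=\Div(\bv\otimes\bv)$ and the $L^\infty\to\BMO$ bound for double Riesz transforms, but you dismissed it. Testing with $\phi^2(P-P_{B_{2R}})$ and then applying John--Nirenberg \eqref{JN} gives
\[
\int_{B_R}|\nabla P|^2\,dx\le \frac{C M^2}{m^2R^2}\int_{B_{2R}}|P-P_{B_{2R}}|^2\,dx\le C\,\frac{M^2}{m^2}\,[P]_{\BMO}^2 ,
\]
uniformly in $R$. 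Hence $\nabla P\in L^2(\mbR^2)$, and Theorem \ref{thm_new rigidity} applies. This is precisely the paper's argument (Lemmas \ref{lemma_bdd to BMO} and \ref{lemma_BMO to H1}).

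\textbf{The strip argument.} Your separate argument for $\Omega_\infty$ should be discarded rather than repaired.
\begin{enumerate}
\item The claims that $v_1$ has a fixed sign and that streamlines traverse the strip are essentially the content of Hamel--Nadirashvili's strip theorem.
\item These claims rely on trajectory analysis that is not available for a merely $H^1_{\rm loc}$, possibly discontinuous, velocity field.
\item The sliding step would additionally need a Lipschitz nonlinearity $f$ and control as $x_1\to\pm\infty$, neither of which you have.
\end{enumerate}
None of this is needed. By Lemma \ref{lemma_reflection}(ii), the strip solution reflects to an $H^1_{\rm loc}$ solution on $\mbR^2$ that is $4$-periodic in $x_2$ and satisfies the same bounds $m\le|\widetilde\bv|\le M$. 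The whole-plane argument above applies to it verbatim; the same holds for $\mbR^2_+$ via Lemma \ref{lemma_reflection}(i).

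In the strip, the only thing that fails for Theorem \ref{thm_new rigidity} is the hypothesis $\nabla P\in L^q$, which periodic reflection destroys. The two-sided speed bound survives reflection, and the $\BMO$--Caccioppoli estimate then produces $\nabla\widetilde P\in L^2(\mbR^2)$ on its own, forcing $\widetilde P$ to be constant.
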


\begin{remark}
The Sobolev $H^1$-regularity assumption cannot be lowered.  
Indeed, define
\begin{equation*}
 \bv(x)=\frac{x^\perp}{|x|}, \qquad P(x)=\log|x|.
\end{equation*}
Then $(\bv,P)$ is a non-shear solution of \eqref{Euler} in $\mbR^2$ with $|\bv|\equiv1$. Moreover,
\[
 \bv,P\in W^{1,q}_{\rm loc}(\mbR^2)\qquad\text{for every } 1\le q<2.
\]
The failure of rigidity is due to a point-degree defect in the pressure equation
\begin{equation*}
 \Div\left(\frac{\nabla P}{|\bv|^2}\right)
 =\Delta\log|x|=2\pi\delta_0.
\end{equation*}
\end{remark}

\begin{remark}
In the strip, the upper bound of $|\bv|$ is not needed for $C^2$ solutions (see \cite{HN_CPAM_2017}).
\end{remark}


\subsection{Key ingredients of the proofs}
Pressure is the central quantity throughout our analysis, which sets the present work apart from the existing literature.
Let us now describe the key ingredients underlying our main proofs.

\subsubsection{Rigidity via maximum principle and elliptic regularity}
An advantage of proving rigidity for $H^1$-solutions is that rigidity in the whole plane implies rigidity in other domains, since an elementary reflection sends the original solution to $H^1$-solutions in the whole plane.

In the whole plane, if $\bv$ has no essential stagnation points, $P$ is a weak solution to
\begin{equation}\label{eq_elliptic P}
\Div\left(\frac{\nabla P}{|\bv|^2}\right)=0 \quad \text{in } \mbR^2.
\end{equation}
The assumption that $\nabla P\in L^q(\mbR^2)$ for some $q\in[1,2]$ implies that there are good radii $r_j\to\infty$ such that
\[
\osc_{\partial B_{r_j}}P\to 0.
\]
Then Theorem \ref{thm_new rigidity} follows from the maximum principle for weak solutions.

The first equation of the Euler system \eqref{Euler} can be reformulated as
\begin{align*}
-\nabla P=\Div(\bv\otimes\bv).
\end{align*}
If $\bv$ is bounded, the endpoint Calder\'on--Zygmund theory yields $P \in \BMO(\mbR^2)$. If, in addition, $|\bv|$ has a positive lower bound, applying Caccioppoli estimates to \eqref{eq_elliptic P}, together with the John--Nirenberg inequality, lifts this $\BMO$ regularity to $\nabla P\in L^2(\mbR^2)$. Consequently, Corollary \ref{coro_HN rigidity} follows from Theorem \ref{thm_new rigidity}.

\subsubsection{Pressure oscillation estimates}
For clarity, we illustrate the key idea in the smooth setting. The $H^1$ case is technically more delicate and requires the Sobolev coarea formula \cite{MSZ_TAMS_2003}, the level-set structure of $W^{2,1}$ functions and their approximation by $C^1$ functions \cite{BKK_RMI_2013}.

First, away from stagnation points, the Euler system implies that
\begin{equation*}
\frac{|v_1\nabla v_2-v_2\nabla v_1|^2}{|\bv|^2}
=|\nabla P|\,\left|\nabla\left(\frac{\bv}{|\bv|} \right)\right|.
\end{equation*}
Then the coarea formula gives
\begin{equation}\label{coarea1}
\mcT(\bv,\Omega)=
\int_{\mbR}
\left(
\int_{P^{-1}(t)\cap\{ |\bv|>0 \}}\left|\nabla\left(\frac{\bv}{|\bv|} \right)\right|\,ds
\right)dt.
\end{equation}
Locally, $\bv$ admits a phase function $\theta$. Direct computation yields
\begin{align*}
\left|\nabla\left(\frac{\bv}{|\bv|} \right)\right|
=|\nabla\theta|.
\end{align*}
Moreover, the Euler equations imply that $\nabla P$ and $\nabla\theta$ are orthogonal. Consequently, the coarea integral (i.e., the inner integral on the right-hand side of \eqref{coarea1}) represents the phase increment of the flow along $P^{-1}(t)$.
We shall show that this phase increment has a positive lower bound, uniformly for a.e. $t$; consequently, the pressure oscillation is controlled by the total curvature.

\begin{remark}
There is also a dual direction-level representation for the total curvature. Since $\nabla P\perp\nabla\theta$, the coarea formula applied to the direction map $\bv/|\bv|$ gives
\begin{equation}\label{coarea2}
\mcT(\bv,\Omega)=
\int_{\mbS^1}
\int_{\Sigma_y}|\nabla P|\,ds\,d\theta(y),
\end{equation}
where
\begin{align*}
\Sigma_y:=\left\{x\in\Omega: |\bv(x)|>0, \frac{\bv(x)}{|\bv(x)|}=y\right\}.    
\end{align*}
This formula is closely related to the equidistribution property of the total curvature established in \cite{GXX_CMP_2026}.    
\end{remark}

This paper is organized as follows. In Section \ref{sec_prelim}, we collect some preliminaries. Section \ref{sec_rigidity} proves the rigidity results in Theorem \ref{thm_new rigidity} and Corollary \ref{coro_HN rigidity}. Section \ref{sec_pressure asymptotics} establishes the far-field asymptotics of the pressure for finite total curvature flows in the plane, the half-plane, and the strip. Section \ref{sec_lower bound} proves the total curvature estimates in terms of the pressure oscillation, whose sharpness is demonstrated in Section \ref{sec_equality case} by concrete examples.

\section{Preliminaries}\label{sec_prelim}
First, we show that an elementary reflection maps $H^1_{\rm loc}$-solutions in the half-plane or strip to solutions in the whole plane with the same regularity.
\begin{lemma}\label{lemma_reflection}
Let $\Omega=\mbR^2_+$ or $\Omega_\infty$. Suppose $\bv\in H^1_{\rm loc}(\Bar{\Omega})$ and $P\in H^1_{\rm loc}(\Bar{\Omega})$ solve \eqref{Euler}-\eqref{SBC}. 
\begin{enumerate}[label=\textup{(\roman*)}]
\item For $\Omega=\mbR^2_+$, define $\widetilde\bv=(\widetilde v_1,\widetilde v_2)$ and $\widetilde P$ by
\begin{equation*}
 \widetilde v_1(x)=v_1(x_1,|x_2|), \ 
 \widetilde v_2(x)=\sgn(x_2) v_2(x_1,|x_2|), \ 
 \widetilde P(x)=P(x_1,|x_2|).
\end{equation*}
Then $\widetilde\bv\in H^1_{\rm loc}(\mbR^2)$ and $\widetilde P\in H^1_{\rm loc}(\mbR^2)$ solve \eqref{Euler} in $\mbR^2$.

\item For $\Omega=\Omega_\infty$, successive reflections across the lines $x_2=2k+1$, $k\in\mathbb Z$, extending $v_1$ and $P$ evenly and $v_2$ oddly across each reflection line, produce a pair $(\widetilde\bv,\widetilde P)$ on $\mbR^2$ that is $4$-periodic in $x_2$, such that $\widetilde\bv\in H^1_{\rm loc}(\mbR^2)$ and $ \widetilde P\in H^1_{\rm loc}(\mbR^2)$ solve \eqref{Euler} in $\mbR^2$. 
\end{enumerate}
\end{lemma}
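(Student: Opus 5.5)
The plan is to first check that the reflected functions lie in $H^1_{\rm loc}(\mbR^2)$, and then verify the weak formulation of \eqref{Euler} on $\mbR^2$ by splitting every test function into even and odd parts in $x_2$.

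\textbf{Regularity.} For $H^1$ regularity across $\{x_2=0\}$, recall a standard fact. If $f\in H^1(B\cap\mbR^2_+)$, then its even extension lies in $H^1(B)$, with weak derivatives $\partial_1 f(x_1,|x_2|)$ and $\sgn(x_2)\,\partial_2 f(x_1,|x_2|)$. This follows by integrating by parts on each half-ball: the boundary traces from the two sides coincide, so they cancel. The odd extension of $f$ lies in $H^1(B)$ exactly when the trace of $f$ on $\{x_2=0\}$ vanishes, since the jump term is then zero. The slip condition \eqref{SBC} gives $v_2=0$ in the trace sense, so $\widetilde v_2\in H^1_{\rm loc}$. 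Meanwhile $\widetilde v_1$ and $\widetilde P$ are even extensions and need no condition.

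\textbf{Divergence-free condition.} Since $\widetilde v_1$ is even and $\widetilde v_2$ is odd in $x_2$, the function $\Div\widetilde\bv$ is even and equals $\Div\bv$ on each half. Hence it vanishes a.e.

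\textbf{Momentum equation.} The cleanest route is to note that $\bv\in H^1_{\rm loc}\subset L^4_{\rm loc}$. The products $v_iv_j$ then lie in $W^{1,1}_{\rm loc}$ (indeed in $W^{1,r}_{\rm loc}$ for $r<2$), so \eqref{Euler} holds pointwise a.e. as an identity between $L^1_{\rm loc}$ functions, and it suffices to check it a.e. on each half-plane. For $x_2<0$, with $y=(x_1,-x_2)$, the first component of $\widetilde\bv\cdot\nabla\widetilde\bv$ is
\[
v_1\partial_1 v_1(y)+(-v_2(y))(-\partial_2 v_1(y))=(\bv\cdot\nabla v_1)(y)=-\partial_1P(y)=-\partial_1\widetilde P(x).
\]
The second component picks up one sign, as expected:
\[
-(v_1\partial_1v_2+v_2\partial_2v_2)(y)=\partial_2P(y)=-\partial_2\widetilde P(x).
\]
Because all terms are $L^1_{\rm loc}$ functions and the extensions are in $W^{1,1}_{\rm loc}(\mbR^2)$ (no singular part on the line), the equation holds in $\mathcal D'(\mbR^2)$.

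If one prefers a weak formulation $\int \widetilde v_i\widetilde v_j\partial_j\varphi_i+\widetilde P\,\Div\varphi=0$, the same argument works by splitting $\varphi$ into parity components. On the relevant parity class, the boundary terms vanish, again using $v_2=0$ on the line.

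\textbf{The strip.} For (ii), I repeat the construction inductively. Reflect across $x_2=1$, giving a function on $\mbR\times(-1,3)$, then across $x_2=-1$, and so on. Each reflection line is a line where the current $v_2$ has zero trace: the original boundary lines at first, and by oddness, the images of these lines afterwards. So the local argument above applies near every line $x_2=2k+1$. The composition of reflections across $x_2=1$ and $x_2=-1$ is translation by $4$, which gives the 4-periodicity and consistency of the definition.

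The main subtle point, rather than an obstacle, is justifying that no distributional mass concentrates on the reflection lines. This is exactly where the trace condition $v_2=0$ enters, both for the $H^1$ regularity of $\widetilde v_2$ and for the cancellation in the momentum equation.
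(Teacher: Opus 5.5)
Your proposal is correct and follows the paper's own argument. Like the paper, you extend $v_1$ and $P$ evenly and extend $v_2$ oddly, and you justify the odd extension by the vanishing trace coming from \eqref{SBC}. You then spell out the verification of \eqref{Euler} that the paper calls straightforward: it holds a.e. on each half-plane, and the reflection line is a null set. You also carry out the inductive strip construction, including the $4$-periodicity and the check of each reflection line.
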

\begin{proof}
The even extension of an $H^1$ function preserves the $H^1$ regularity. The odd extension has the same property when the trace of the function on the flat boundary vanishes. Hence the slip boundary condition implies $ \widetilde\bv\in H^1_{\rm loc}(\mbR^2)$ and $ \widetilde P\in H^1_{\rm loc}(\mbR^2)$. It is straightforward to verify that $( \widetilde\bv, \widetilde P)$ solves \eqref{Euler} in $\mbR^2$.
\end{proof}

Next, we show that the pressure has more regularity than assumed a priori, owing to a Jacobian structure in the pressure equation. The following lemma is known (see, e.g., \cite{KPR_ARMA_2013} for a closely related result).

\begin{lemma}\label{lemma_pressure in W21}
Let $\bv\in H^1_{\rm loc}(\mbR^2)$ and $P\in H^1_{\rm loc}(\mbR^2)$ be a solution to \eqref{Euler}. Then $P\in W^{2,1}_{\rm loc}(\mbR^2)$. In particular, $P$ admits a continuous representative.
\end{lemma}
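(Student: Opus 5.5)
Taking the divergence of the momentum equation gives the pressure equation $-\Delta P=\partial_i\partial_j(v_iv_j)$. Using $\Div\,\bv=0$ and $\bv\in H^1_{\rm loc}$, this equals $-\Delta P = 2\det(\nabla\bv)$, with
\[
\det(\nabla\bv)=\partial_1v_1\partial_2v_2-\partial_2v_1\partial_1v_2 .
\]
I will verify this identity first for smooth divergence-free fields and then pass to the limit. The passage works because mollification $\bv_\varepsilon=\bv*\rho_\varepsilon$ preserves $\Div\,\bv_\varepsilon=0$. Moreover $\partial_i\partial_j(v_iv_j)\to\partial_i\partial_j(v_iv_j)$ in distributions, since $v_iv_j\in L^1_{\rm loc}$ with $\bv\in L^p_{\rm loc}$ for all $p<\infty$. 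The determinants converge in $L^1_{\rm loc}$ because $\nabla\bv_\varepsilon\to\nabla\bv$ in $L^2_{\rm loc}$. Thus $-\Delta P=2\det\nabla\bv$ holds weakly in $\mbR^2$. The determinant has the div-curl (null Lagrangian) structure
\[
\det\nabla\bv=\partial_1(v_1\partial_2v_2)-\partial_2(v_1\partial_1v_2),
\]
which is the ingredient needed for improved regularity.

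Next I localize. Fix a ball $B=B_R$ and a cutoff $\eta\in C_c^\infty(B_{2R})$ with $\eta\equiv1$ on $B_R$. On $B_{2R}$ the components of $\nabla\bv$ satisfy $\nabla v_2\in L^2$ and $\nabla^\perp v_1$ is curl-free with $L^2$ norm. To fit the Coifman--Lions--Meyer--Semmes theorem, I extend $\eta v_1,\eta v_2\in H^1(\mbR^2)$ with compact support. The quantity $\det\nabla(\eta\bv)$ belongs to the Hardy space $\mcH^1(\mbR^2)$, being a Jacobian of an $H^1$ map, and on $B_R$ it coincides with $\det\nabla\bv$. I then write $P=P_1+P_2$ on $B_R$, where
\[
P_1=-\frac{1}{\pi}\,\Gamma*\det\nabla(\eta\bv), \qquad \Gamma=\frac{1}{2\pi}\log|x|,
\]
up to sign conventions, so that $-\Delta P_1=2\det\nabla(\eta\bv)$ on $\mbR^2$. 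The remainder $P_2=P-P_1$ is harmonic in $B_R$ and hence smooth there.

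The key step is that $\nabla^2 P_1=c\,\nabla^2\Gamma*f$ with $f\in\mcH^1$. Second derivatives of the Newtonian potential are Riesz transforms $R_iR_j f$, and Riesz transforms are bounded on $\mcH^1\subset L^1$. Hence $\nabla^2P_1\in L^1(\mbR^2)$, and $P_1$ is locally in $L^1$ with $\nabla P_1\in L^{2,\infty}$, in particular locally integrable. It follows that $P_1\in W^{2,1}_{\rm loc}$, and therefore $P\in W^{2,1}(B_{R/2})$. Since $R$ is arbitrary, $P\in W^{2,1}_{\rm loc}(\mbR^2)$.

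Continuity then follows from the two-dimensional embedding $W^{2,1}_{\rm loc}\hookrightarrow C$. This is the borderline Sobolev embedding: in two dimensions, $W^{2,1}$ functions are continuous, for instance via $\partial_1\partial_2$-representation or because $\nabla u\in W^{1,1}\subset L^{2,1}$ gives continuity. An alternative for the whole argument is Wente's inequality applied directly. The main obstacle I expect is the careful justification of the Jacobian identity and of the $\mcH^1$ membership at the $H^1$ level, including the role of the cutoff. The CLMS theorem requires a curl-free field paired with a divergence-free field in $L^2$. Here $\nabla(\eta v_2)$ is curl-free and $\nabla^\perp(\eta v_1)$ is divergence-free, so the hypothesis holds with no extra condition, but the lower-order terms produced by $\eta$ must be checked to be absorbed into the localization.
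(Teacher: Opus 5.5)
Your proposal is correct and follows the paper's proof essentially step for step: the Jacobian form of the pressure equation, the cutoff $\bw=\eta\bv\in H^1(\mbR^2)$ so that CLMS puts $\det\nabla\bw$ in the Hardy space, the logarithmic potential whose Hessian is a double Riesz transform and hence lies in $L^1$, and harmonicity of the remainder on the ball. Two small fixes: there is a sign slip, since the divergence of the momentum equation gives $\Delta P=2\det\nabla\bv$ rather than $-\Delta P=2\det\nabla\bv$. Also, your worry about lower-order cutoff terms is unfounded, because $\det\nabla(\eta\bv)$ is an exact Jacobian and coincides with $\det\nabla\bv$ wherever $\eta\equiv1$.
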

\begin{proof}
By the Euler system \eqref{Euler}, one has
\[
\Delta P=-\sum_{i,j=1}^2\partial_i v_j\,\partial_j v_i=2\det\nabla\bv.
\]
For any $U\Subset\mathbb R^2$, choose $\eta\in C_c^\infty(\mbR^2)$ such that $\eta=1$ on a
neighborhood of $\Bar{U}$, and let $\bw=\eta\bv$. Then $\bw\in H^1(\mbR^2)$ and
\[
F:=2\det\nabla\bw=2\det\nabla\bv \qquad \text{a.e. on } U.
\]
The Coifman--Lions--Meyer--Semmes Jacobian estimate \cite{CLMS_JMPA_1993} gives that $F$ belongs to the real Hardy space $H^1_{\rm Har}(\mbR^2)$.

Let $\Gamma(x)=(2\pi)^{-1}\log|x|$ and put $Q=\Gamma*F$. Then $\Delta Q=F$, while each component of $\nabla^2Q$ is a double Riesz transform of $F$. The endpoint Calder\'on--Zygmund estimate (see, e.g., \cite{Grafakos_modern_2009}) therefore yields $\nabla^2 Q\in L^1(\mbR^2)$. Since $\Gamma$ and $\nabla\Gamma$ are locally integrable and $F$ is compactly supported, it follows that $Q\in W^{2,1}_{\mathrm{loc}}(\mathbb R^2)$.

On $U$ we have $\Delta(P-Q)=0$. Hence $P-Q$ is smooth on $U$, and consequently $P\in W^{2,1}(U)$. Since $U\Subset\mathbb R^2$ is arbitrary, the conclusion follows.
\end{proof}

In the next two lemmas, we gather the consequences of \cite{BKK_RMI_2013} concerning the level-set structure of $W^{2,1}$ functions and their approximation by $C^1$ functions.

\begin{lemma}[\hspace{0.05em}\cite{BKK_RMI_2013}]\label{lemma_BKK_Sard}
Let $\Omega\subset\mbR^2$ be a bounded Lipschitz domain. If $P\in W^{2,1}(\Omega)$, there is an $\mcL^1$-null set $\msN_0\subset\mbR$ such that, for every $t\in\mbR\setminus\msN_0$, the level set $P^{-1}(t)$ is a finite disjoint union of $C^1$ loops or simple arcs ending transversally on the boundary, and the tangent vector on each component is absolutely continuous.    
\end{lemma}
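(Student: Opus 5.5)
The statement is a fine-level Sard/Morse-type theorem for $W^{2,1}$ functions in the plane. I would prove it in three steps: reduce to $C^1$ information on most of the domain, use a Dubovitskii--Federer type Sard theorem to discard the critical values, and read off the level-set structure from the implicit function theorem. The basic facts about $P\in W^{2,1}$ in two dimensions are the following. $P$ has a continuous representative, since $W^{2,1}\hookrightarrow C$ in dimension two. Moreover $\nabla P\in W^{1,1}$, so $\nabla P$ has a representative that is quasi-continuous with respect to $\mathcal H^1$-capacity and has Lebesgue points everywhere outside an $\mathcal H^1$-null set. Working first with an extension of $P$ to a compactly supported $W^{2,1}(\mbR^2)$ function (possible since $\Omega$ is Lipschitz), I would invoke a Luzin-type $C^1$ approximation at the $\mcH^1$ level. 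That is, for each $\varepsilon>0$ there is an open set $U_\varepsilon$ with $\mcH^1_\infty(U_\varepsilon)<\varepsilon$ and a $C^1$ function $g_\varepsilon$ with $g_\varepsilon=P$ and $\nabla g_\varepsilon=\nabla P$ on $\mbR^2\setminus U_\varepsilon$. This refinement of the Federer--Ziemer/Luzin theorem is the key technical input from \cite{BKK_RMI_2013}.

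\textbf{Sard step.} I would show that $\mcL^1(P(Z))=0$, where $Z=\{x:\nabla P(x)=0\}$ is taken for the precise representative. On $Z\setminus U_\varepsilon$, the function $g_\varepsilon$ is $C^1$ with vanishing gradient, and its second-derivative information is controlled in an $L^1$ sense. A covering argument then gives $\mcL^1\bigl(P(Z\cap B)\bigr)\lesssim \int_B|\nabla^2P|$ for small balls centred at density points. The idea is a Morse--Sard estimate of Dubovitskii type: $\osc_{B_r}P\lesssim r\cdot\fint$-type bounds on $\nabla P$, where $\nabla P\approx0$ on $Z$ and Poincaré is applied to $\nabla P$. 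Summing gives a Vitali-type bound that tends to zero by absolute continuity of $\int|\nabla^2P|$. The exceptional part $Z\cap U_\varepsilon$ has image of small measure because $P$ is continuous and $\mcH^1_\infty(U_\varepsilon)<\varepsilon$. Here one uses that $\mcH^1$-small sets have images of small $\mcL^1$-measure, via the $W^{2,1}$ oscillation estimate $\osc_{B_r}P\lesssim \int_{B_r}|\nabla^2 P|+r^{-1}\!\int_{B_r}|\nabla P|$. I expect this step to be the main obstacle. The delicate point is exactly that $\nabla^2P$ is only $L^1$, so the usual Sard argument, which needs $C^{1,1}$ or $W^{2,p}$ with $p>1$, must be replaced by the fine $\mcH^1$-level estimates above. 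Similarly, I would remove the images of the non-Lebesgue points of $\nabla P$ and of the analogous critical set of $P|_{\partial\Omega}$ on the Lipschitz boundary, together with the tangential degeneracy there. Together with a countable union over $\varepsilon=1/k$, these define $\msN_0$.

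\textbf{Structure step.} For $t\notin\msN_0$, every point $x\in P^{-1}(t)$ is a Lebesgue point with $\nabla P(x)\ne0$ and $\nabla P$ continuous at $x$ along the relevant set. A version of the implicit function theorem for functions that are differentiable with gradient continuous at the point, available from \cite{BKK_RMI_2013}, shows that near $x$ the level set is a $C^1$ graph. Compactness of $P^{-1}(t)\cap\Bar\Omega$ then yields finitely many components. Each component is a $C^1$ one-manifold, hence a loop or an arc. Arcs must end on $\partial\Omega$, and they do so transversally because $t$ avoids the critical values of the boundary trace. Finally, the unit tangent is $\nabla^\perp P/|\nabla P|$. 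Since $\nabla P\in W^{1,1}$ restricts to an absolutely continuous function on $\mcH^1$-a.e. level curve (by coarea applied to $\nabla P$ along levels of $P$, again discarding a null set of $t$), and $|\nabla P|$ is bounded below on the compact curve, the tangent is absolutely continuous on each component.
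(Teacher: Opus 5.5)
Your overall architecture matches \cite{BKK_RMI_2013}, which the paper cites as a black box without giving a proof: Luzin-type $C^1$ approximation off an $\mcH^1_\infty$-small open set, the $\mcH^1$-Luzin $N$ property, a Morse--Sard theorem, then the level-set structure, and coarea for the absolute continuity of $\nabla^\perp P/|\nabla P|$.

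\textbf{The gap is the Morse--Sard step}, the one step you try to prove yourself, $\mcL^1(P(Z))=0$. The bound $\mcL^1(P(Z\cap B))\lesssim\int_B|\nabla^2P|$ you aim for is the right kind of statement. However, oscillation plus Poincar\'e plus covering cannot produce it.
\begin{itemize}
\item Poincar\'e bounds $r\fint_{B_r}|\nabla P-(\nabla P)_{B_r}|$ by $C\int_{B_r}|\nabla^2P|$. It does not bound $r|(\nabla P)_{B_r}|=r|(\nabla P)_{B_r}-\nabla P(x)|$. Since $W^{1,1}\not\subset L^\infty$ in the plane, $\nabla P(x)=0$ only makes this term $o(r)$.
\item This is not a technicality. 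Take $v=\xi\cdot x$, and in $B_\rho(a)$ replace it by $\xi\cdot a+\xi\cdot(x-a)\,\phi(|x-a|/\rho)$, with $\phi(s)=s^2$ near $0$ and $\phi=1$ for $s\ge1$. Do the same at $b$, with $b-a\parallel\xi$ and $|b-a|=\ell$. This creates smooth critical points $a,b$ at cost $\|\nabla^2v\|_{L^1}\lesssim|\xi|\rho$, yet $|v(a)-v(b)|=|\xi|\ell$. So $\osc_B P$ over a ball meeting $Z$ is not controlled by $\int_B|\nabla^2P|$.
\item Your covering sum is therefore $\sum_i\bigl(C\int_{B_i}|\nabla^2P|+o(r_i)\bigr)$. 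This is small only when $\sum_i r_i$ is controlled, i.e.\ on $\mcH^1$-small sets. Vitali's enlargement also spoils the disjointness needed for the first sum; Besicovitch repairs that, but not the second.
\end{itemize}

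\textbf{What survives, and what is missing.} The argument does work at points of $Z$ where $\nabla P$ is $L^1$-differentiable with zero differential, which includes a.e.\ point of $Z$. There $r\fint_{B_r}|\nabla P|=o(r^2)$. A Besicovitch cover inside an open $G\supset Z$ with $\int_G|\nabla^2P|$ small then closes, because $\sum r_i^2\lesssim|G|$.

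What remains is an $\mcL^2$-null part of the critical set that can be large in the $\mcH^1$ sense, and none of your tools reaches it. On $Z\setminus U_\varepsilon$ you only know $g_\varepsilon\in C^1$ with $\nabla g_\varepsilon=0$, and Morse--Sard fails for $C^1$ functions on $\mbR^2$ (Whitney). The Luzin $N$ property covers only $\mcH^1_\infty$-small sets, and this residue cannot be absorbed into one. Showing its image is null is precisely the nontrivial content of the Bourgain--Korobkov--Kristensen Morse--Sard theorem for $W^{2,1}$. You must either supply that argument or cite it, as the paper does.

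\textbf{A smaller point on the structure step.} There is no implicit function theorem for functions differentiable only at a point, and $P$ is not differentiable on a neighbourhood. Argue instead as follows. For $t\notin P(U_\varepsilon)\cup P(Z)$, where $\mcL^1(P(U_\varepsilon))\to0$ by Luzin $N$, you have $P^{-1}(t)\subset g_\varepsilon^{-1}(t)$ with $\nabla g_\varepsilon\neq0$. Then $P-t$ must change sign across each local arc of $g_\varepsilon^{-1}(t)$. Otherwise $P$ would have a local extremum at a point where it is differentiable with nonzero gradient. Hence $P^{-1}(t)$ is locally that whole $C^1$ arc.
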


\begin{lemma}[\hspace{0.05em}\cite{BKK_RMI_2013}]\label{lemma_C1_approx}
Let $\Omega\subset\mbR^2$ be a bounded Lipschitz domain. If $P\in W^{2,1}(\Omega)$, then for every $\varepsilon>0$ there are an open set $V_\varepsilon\subset\mbR$ with $\mcH^1(V_\varepsilon)<\varepsilon$ and a function $P_\varepsilon\in C^1(\mbR^2)$ such that, on $P^{-1}(\mbR\setminus V_\varepsilon)$,
\begin{equation*}
P=P_\varepsilon, \quad \nabla P=\nabla P_\varepsilon\neq0.
\end{equation*}    
\end{lemma}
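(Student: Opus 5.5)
This lemma is quoted from \cite{BKK_RMI_2013}, so the plan is to show how it follows from the results there, rather than to rebuild them. The argument has three steps: reduce to a function on all of $\mbR^2$, use the Luzin-type $C^1$ property of $W^{2,1}$ functions, and transfer the approximation to the level sets.

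\textbf{Step 1 (extension and continuity).} Since $\Omega$ is a bounded Lipschitz domain, Stein's extension operator gives $\widetilde P\in W^{2,1}(\mbR^2)$ with compact support and $\widetilde P=P$ on $\Omega$. In two dimensions $W^{2,1}\hookrightarrow C_0$, so $\widetilde P$ has a continuous representative, which we use from now on. Moreover $\nabla\widetilde P\in W^{1,1}$, so $\nabla\widetilde P$ has a representative defined and approximately continuous outside a set of $\mcH^1$-measure zero. This is the refined fine-property result for $W^{1,1}$ in the plane, as used in \cite{BKK_RMI_2013}.

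\textbf{Step 2 (Luzin-type $C^1$ approximation).} By the result of Bourgain--Korobkov--Kristensen, for every $\varepsilon>0$ there are an open set $U_\varepsilon\subset\mbR^2$ with $\mcH^1_\infty(U_\varepsilon)<\varepsilon$ and $g_\varepsilon\in C^1(\mbR^2)$ such that $\widetilde P=g_\varepsilon$ and $\nabla\widetilde P=\nabla g_\varepsilon$ on $\mbR^2\setminus U_\varepsilon$. The key point is that the exceptional set is small in one-dimensional Hausdorff content, not merely in Lebesgue measure.

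\textbf{Step 3 (passing to the range).} Let $Z=\{x:\nabla\widetilde P(x)=0\}$, defined via the precise representative. The Morse--Sard theorem for $W^{2,1}$ in the plane \cite{BKK_RMI_2013} gives $\mcH^1(\widetilde P(Z))=0$. Because $\widetilde P$ is continuous and a $W^{2,1}$ function maps sets of small $\mcH^1_\infty$-content to sets of small $\mcH^1$-measure (the Luzin N-property proved there), $\mcH^1(\widetilde P(U_\varepsilon))$ is small. Enlarge $\widetilde P(U_\varepsilon)\cup\widetilde P(Z)$ to an open set $V_\varepsilon\subset\mbR$ with $\mcH^1(V_\varepsilon)<\varepsilon$, possible by outer regularity. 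Set $P_\varepsilon=g_\varepsilon$. If $x\in\Omega$ and $P(x)\notin V_\varepsilon$, then $x\notin U_\varepsilon$ and $x\notin Z$. Hence $P(x)=P_\varepsilon(x)$ and $\nabla P(x)=\nabla P_\varepsilon(x)\neq 0$, which is the claim.

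\textbf{Main obstacle.} The difficulty is quantitative control of $\mcH^1(\widetilde P(U_\varepsilon))$ by the smallness of $U_\varepsilon$. This needs the estimate $\mcH^1(\widetilde P(E))\lesssim\mcH^1_\infty(E)^{1/2}\|\nabla^2\widetilde P\|_{L^1}^{1/2}$-type bounds, up to a local modulus, which come from the Hardy--Littlewood maximal estimates for $W^{2,1}$ functions on dyadic squares. I would take this directly from \cite{BKK_RMI_2013}, applied to the compactly supported extension, rather than reprove it.
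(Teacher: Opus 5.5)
Your proposal is correct and follows essentially the paper's route. The paper simply quotes this result from \cite{BKK_RMI_2013}. Your reduction is exactly how that theorem is derived there: Stein extension to a compactly supported $\widetilde P\in W^{2,1}(\mbR^2)$, then the Luzin-type $C^1$ approximation off an open set of small $\mcH^1_\infty$-content, the $\mcH^1_\infty$-version of the Luzin N-property, the Morse--Sard property $\mcH^1(\widetilde P(Z))=0$, and finally an open $V_\varepsilon\supset\widetilde P(U_\varepsilon)\cup\widetilde P(Z)$ of small measure.

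The only inaccuracy is the heuristic in your last paragraph, which your argument does not actually use. A bound of the form $\mcH^1(\widetilde P(E))\lesssim\mcH^1_\infty(E)^{1/2}\|\nabla^2\widetilde P\|_{L^1}^{1/2}$ cannot hold with a uniform constant. Replacing $(\widetilde P,E)$ by $(\widetilde P(\lambda\,\cdot),\lambda^{-1}E)$ leaves $\|\nabla^2\widetilde P\|_{L^1}$ and $\mcH^1(\widetilde P(E))$ unchanged while dividing $\mcH^1_\infty(E)$ by $\lambda$. The N-property in \cite{BKK_RMI_2013} is a qualitative $\varepsilon$--$\delta$ statement: for the given $\varepsilon$ you simply choose $U_\varepsilon$ with content below the corresponding $\delta$.
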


Throughout, every locally integrable function is identified with its precise representative, i.e., the canonical representative determined by local averages.

\begin{lemma}\label{lemma_trace on good level}
Let $\Omega\subset\mbR^2$ be a bounded Lipschitz domain. Suppose that $\bv\in H^1(\Omega)$ and $P\in W^{2,1}(\Omega)$. Let $\msN_0$ be the $\mcL^1$-null set in Lemma \ref{lemma_BKK_Sard}. Then there is an $\mcH^1$-null set $\msN\subset\mbR$ containing $\msN_0$ such that, for every $t\in\mbR\setminus\msN$ and every connected component $\Gamma$ of $P^{-1}(t)$, the following assertions hold.
\begin{enumerate}[label=\textup{(\roman*)}]
\item It holds that
\[
\bv_\Gamma:=\bv|_\Gamma\in H^1(\Gamma), \qquad
(\nabla \bv)|_\Gamma \in L^2(\Gamma),
\]
where the restriction $\bv_\Gamma$ agrees with the ordinary Sobolev trace of $\bv$ for $\mcH^1$-almost every point on $\Gamma$. Denote by $\btau$ the unit tangent field on $\Gamma$, and by $\partial_s$ the tangential derivative along the direction of $\btau$. Then
\[
\partial_s \bv_\Gamma=(\nabla \bv)|_\Gamma\,\btau, \quad \text{$\mcH^1$-a.e. on $\Gamma$}.
\]

\item 
If $\Gamma$ is an arc and $x\in\partial\Gamma\subset\partial\Omega$, then
\begin{equation*}
\operatorname{Tr}_{\partial\Gamma}\bv_\Gamma(x)=\bigl(\operatorname{Tr}_{\partial\Omega}\bv\bigr)(x),
\end{equation*}
where $\operatorname{Tr}$ denotes the trace operator.

\item After enlarging $\msN$ by another $\mcH^1$-null set, the above results are independent of the choices of representatives of $\bv$, $\nabla\bv$ and $\operatorname{Tr}_{\partial\Omega}\bv$.
\end{enumerate}
\end{lemma}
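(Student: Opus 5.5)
The plan is to obtain the good levels from the coarea formula for $W^{2,1}$ (hence $C^1$ on most of its level sets) functions, together with the $C^1$ approximation in Lemma \ref{lemma_C1_approx}. I would reduce everything to an integrability statement on level sets and then use Fubini-type arguments.

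\textbf{Step 1: integrability on almost every level.} Fix representatives of $\bv$ and $\nabla\bv$. Apply Lemma \ref{lemma_C1_approx} with $\varepsilon=1/k$ to get $V_k$ and $P_k\in C^1$. On $E_k:=P^{-1}(\mbR\setminus V_k)$ we have $\nabla P=\nabla P_k\neq 0$. Near $E_k$, or more precisely near each point of the compact set $\{|\nabla P_k|\ge \delta\}\cap\Bar\Omega$, the implicit function theorem straightens the level sets of $P_k$ by a $C^1$ bi-Lipschitz change of variables $(s,t)$, with $t=P_k$. The coarea formula applied to $|\nabla\bv|^2+|\bv|^2\in L^1$ then gives
\[
\int_{\mbR}\int_{P^{-1}(t)\cap\{|\nabla P|\ge\delta\}}\bigl(|\nabla\bv|^2+|\bv|^2\bigr)\,d\mcH^1\,dt\le \delta^{-1}\|\bv\|_{H^1}^2<\infty.
\]
By Lemma \ref{lemma_BKK_Sard}, for $t\notin\msN_0$ the gradient $\nabla P$ does not vanish on the compact $C^1$ curves $P^{-1}(t)$. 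Taking $\delta=1/m$ and using countably many null sets, we get that for a.e.\ $t$ the restrictions $\bv|_\Gamma$ and $(\nabla\bv)|_\Gamma$ lie in $L^2(\Gamma)$.

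\textbf{Step 2: tangential derivative and identification with the trace.} In the straightened coordinates $(s,t)$, the function $\bv\circ\Phi$ belongs to $H^1$. A Sobolev function has a representative that is absolutely continuous on almost every coordinate line, with classical partial derivatives equal to the weak ones. The $s$-lines are exactly the level arcs, and the chain rule gives $\partial_s\bv_\Gamma=(\nabla\bv)\btau$ (up to the Jacobian normalization) for a.e.\ $t$.

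We need this to hold for the precise representative. Lebesgue points of $\bv$ are $\mcH^1$-a.e.\ on almost every line (the $H^1$ capacity-null exceptional set meets almost every line in an $\mcH^1$-null set), so the line-wise absolutely continuous representative coincides with the precise representative there. The same observation gives (iii): two representatives differ on a Lebesgue-null set $Z$, and by coarea $\mcH^1(Z\cap P^{-1}(t))=0$ for a.e.\ $t$. Lemma \ref{lemma_BKK_Sard} lets us cover each good level set by finitely many such charts.

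\textbf{Step 3: boundary endpoints.} Near $x\in\partial\Gamma\subset\partial\Omega$, the arc meets $\partial\Omega$ transversally, so the chart can be chosen with $\partial\Omega$ as one coordinate boundary, e.g.\ $\{s=0\}$ after flattening the Lipschitz boundary. For $H^1$ functions on a half-strip, the $t$-a.e.\ limit of the line-wise absolutely continuous representative as $s\to0$ equals the Sobolev trace at $(0,t)$ for a.e.\ $t$. This follows from Fubini and the fact that the trace is the $L^2$-limit of the restrictions to $\{s=\sigma\}$. Since $t\mapsto x(t)$ is bi-Lipschitz along $\partial\Omega$, null sets in $t$ correspond to $\mcH^1$-null boundary sets, which gives (ii) and its representative-independence.

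\textbf{Main obstacle.} The delicate point is the passage from "a.e.\ $t$ within each local chart" to a single global exceptional set. Charts exist only on the regions $\{|\nabla P_k|\ge\delta\}$, and good level sets might approach the bad set $P^{-1}(V_k)$. I would handle this by taking the union over $k$ and $\delta$ of the countably many exceptional sets. For fixed $t\notin\bigcup_k V_k^c$-complement failures, i.e.\ $t$ outside $\bigcap_k V_k$, which is a null set, the level set lies entirely in $E_k$ for some $k$ and is compact with $|\nabla P|\ge\delta$ on it. A finite subcover of charts then works.
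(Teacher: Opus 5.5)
Your route differs from the paper's. The paper never straightens level sets. It takes $\bv_j\in C^\infty(\Bar{\Omega})$ with $\sum_j\|\bv_j-\bv\|_{H^1}^2<\infty$ and applies the coarea formula to $F_j/|\nabla P|$, where $F_j=|\bv_j-\bv|^2+|\nabla\bv_j-\nabla\bv|^2$. This yields, for a.e.\ $t$, $\bv_j|_\Gamma\to\bv|_\Gamma$ and $(\nabla\bv_j)|_\Gamma\to(\nabla\bv)|_\Gamma$ in $L^2(\Gamma)$. The classical chain rule for $\bv_j$ then passes to the limit, giving (i) together with $H^1(\Gamma)$-convergence, with no charts at all. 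Your charts plus ACL/Fubini can also deliver (i) and (iii), but three points need repair:
\begin{enumerate}
\item The display in Step 1 is wrong as written. The coarea formula gives $\int_{\mbR}\int_{P^{-1}(t)\cap A}f\,d\mcH^1\,dt=\int_A f|\nabla P|\,dx$, so you need an upper bound on $|\nabla P|$, not the lower bound $\delta$. For instance, $|\nabla P|=|\nabla P_k|\le\|\nabla P_k\|_{L^\infty(\Bar{\Omega})}$ on $P^{-1}(\mbR\setminus V_k)$.
\item Your charts straighten level sets of $P_k$, but for $t\notin V_k$ you only know $P^{-1}(t)\subset P_k^{-1}(t)$. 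You need an open--closed argument on each chart line to see that they coincide locally.
\item Charts centered at boundary points leave $\Omega$. You must either extend $\bv$ to $H^1(\mbR^2)$ or settle for local absolute continuity on the open arc, which suffices for $H^1(\Gamma)$ once Step 1 gives the $L^2$ bounds.
\end{enumerate}

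The genuine gap is Step 3. You need a chart in which simultaneously $t=P_k$ and $\partial\Omega=\{s=0\}$. That requires every level curve of $P_k$ near $x$ to cross $\partial\Omega$ exactly once, uniformly transversally, on a whole neighborhood of $x$. This follows from transversality at $x$ when $\partial\Omega$ is $C^1$ near $x$, but not for a Lipschitz boundary. For example, take $\Omega=\{y_2>g(y_1)\}$ with $g(y_1)=y_1+y_1^2\sin(1/y_1)$ and $P_k=y_2$. The line $\{y_2=0\}$ is transversal at the origin, yet $g'<0$ on small intervals accumulating at $0$. So every neighborhood contains level lines crossing $\partial\Omega$ three times, and no such chart exists. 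A bi-Lipschitz flattening of $\partial\Omega$ destroys $t=P_k$, and for the same reason $t\mapsto x(t)$ need not be bi-Lipschitz.

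The missing ingredient is the paper's one-dimensional argument on $\partial\Omega$ itself. Here $p=P|_{\partial\Omega}\in W^{1,1}(\partial\Omega)$ is absolutely continuous. Set $\bg=\operatorname{Tr}_{\partial\Omega}\bv$ and $\bg_j=\bv_j|_{\partial\Omega}$. The area formula applied to $\sum_j|\bg_j-\bg|^2/|p'|$ gives $\bg_j(x)\to\bg(x)$ at every point of $p^{-1}(t)$, for a.e.\ $t$. Combined with the uniform convergence $\bv_j|_\Gamma\to\bv_\Gamma$ on the closed arc, this identifies the endpoint value with the trace. Representative independence then needs only Lusin's (N) property of $p$, not a bi-Lipschitz parametrization. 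As written, your Step 3 is valid when $\partial\Omega$ is $C^1$ away from finitely many corners, which a.e.\ level avoids. That covers the disks, annuli, half-annuli and rectangles used later in the paper, but not the lemma in its stated Lipschitz generality.
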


\begin{proof}
By Lemma \ref{lemma_BKK_Sard}, for every $t\in\mbR\setminus\mcN_0$, the level set $P^{-1}(t)$ is a finite disjoint union of $C^1$ loops or simple arcs ending transversally on the boundary.

By Lemma \ref{lemma_C1_approx}, for every $k\in\mbN_+$, there exist an open set $V_k\subset\mbR$ with $\mcH^1(V_k)<\frac1k$ and a function $P_k\in C^1(\mbR^2)$ such that
\begin{equation*}
P=P_k, \quad \nabla P=\nabla P_k\neq0, \quad \text{on} \ P^{-1}(\mbR\setminus V_k).
\end{equation*}    
Let
\[
\mcN_1=\bigcap_{k=1}^{\infty} V_k.
\]
Then $\mcH^1(\mcN_1)=0$. For every $t\in\mbR\setminus\mcN_1$, there exists $k\in\mbN_+$ such that $t\in \mbR\setminus V_k$.

(\Rn{1}) 
By density of smooth functions in $H^1(\Omega)$, there are $\bv_j\in C^\infty(\overline\Omega)$ such that
\begin{equation}\label{eq:summable-approximation}
\sum_{j=1}^{\infty}\|\bv_j-\bv\|_{H^1(\Omega)}^2<\infty.
\end{equation}
Define the nonnegative measurable function
\[
F_j(x):=|\bv_j(x)-\bv(x)|^2+|\nabla \bv_j(x)-\nabla \bv(x)|^2.
\]
The Sobolev coarea formula \cite{MSZ_TAMS_2003,PR_Selecta_2020} gives
\begin{equation}\label{eq:weighted-coarea}
\int_{\{|\nabla P|>0\}}F_j(x)\,dx
=\int_{\mbR}\int_{P^{-1}(t)\cap\{|\nabla P|>0\}}\frac{F_j(x)}{|\nabla P(x)|}\,d\mcH^1\,dt.
\end{equation}
Let
\[
Q_j(t):=\int_{P^{-1}(t)\cap\{|\nabla P|>0\}}\frac{F_j(x)}{|\nabla P(x)|}\,d\mcH^1.
\]
By \eqref{eq:summable-approximation} and \eqref{eq:weighted-coarea},
\begin{align*}\label{eq:sum-Q}
\int_\mbR\sum_{j=1}^{\infty}Q_j(t)\,dt
= \sum_{j=1}^{\infty}\int_{\{|\nabla P|>0\}}F_j(x)\,dx
\le \sum_{j=1}^{\infty}\|\bv_j-\bv\|_{H^1(\Omega)}^2<\infty.
\end{align*}
It follows that there is a null set $\msN_2\subset\mbR$ such that
\begin{equation}\label{eq:Q-convergence}
\sum_{j=1}^{\infty}Q_j(t)<\infty, \quad \text{and hence} \quad Q_j(t)\longrightarrow0,
\end{equation}
for every $t\in\mbR\setminus \msN_2$.

Fix
\[
t\in\mbR\setminus(\msN_0\cup \msN_1 \cup \msN_2)
\]
and let $\Gamma$ be any component of $P^{-1}(t)\cap\Bar{\Omega}$. By Lemma~\ref{lemma_C1_approx}, $|\nabla P|$ is continuous and positive on $\Gamma$. Therefore
\begin{align*}
\int_\Gamma \left(|\bv_j-\bv|^2+|\nabla \bv_j-\nabla \bv|^2\right)\,ds
\le \|\nabla P\|_{L^\infty(\Gamma)} Q_j(t) \rightarrow 0.
\end{align*}
This implies that
\begin{equation}\label{eq:two-convergences}
\bv_j|_\Gamma \longrightarrow \bv|_\Gamma \quad \text{in }L^2(\Gamma), \quad
(\nabla \bv_j)|_\Gamma \longrightarrow (\nabla \bv)|_\Gamma \quad \text{in }L^2(\Gamma).
\end{equation}
In particular, both $\bv|_\Gamma$ and $(\nabla \bv)|_\Gamma$ belong to $L^2(\Gamma)$.
By the continuity of the trace operator
\[
\operatorname{Tr}_\Gamma: H^1(\Omega) \longrightarrow L^2(\Gamma)
\]
and the smoothness of $\bv_j$, one has
\[
\bv_j|_\Gamma=\operatorname{Tr}_\Gamma \bv_j \longrightarrow \operatorname{Tr}_\Gamma \bv \quad\text{in }L^2(\Gamma).
\]
This gives that $\bv|_\Gamma=\operatorname{Tr}_\Gamma \bv$ in $L^2(\Gamma)$. 

Let $\bga: I\longrightarrow \Gamma$ be an arclength parametrization. For an arc take $I=(0,\ell)$, where $\ell=\mcH^1(\Gamma)$; for a loop take the one-dimensional torus $I=\mbR/\ell\mbZ$. Let $\btau(s)=\bga'(s)$.
The classical chain rule gives
\[
\partial_s (\bv_j|_\Gamma)=(\nabla \bv_j)|_\Gamma\,\btau.
\]
It follows from \eqref{eq:two-convergences} that
\[
\partial_s (\bv_j|_\Gamma) \longrightarrow (\nabla \bv)|_\Gamma\,\btau \quad\text{in } L^2(\Gamma).
\]
This, together with \eqref{eq:two-convergences}, further implies that
\[
\bv|_\Gamma \in H^1(\Gamma), \quad 
\partial_s(\bv|_\Gamma)=(\nabla \bv)|_\Gamma\,\btau \quad \mcH^1 \text{-almost everywhere on } \Gamma.
\]
So we have proved part (\Rn{1}).

(\Rn{2}) 
For simplicity of notation, let us introduce
\[
p:=P|_{\partial\Omega}, \quad \bg:=\operatorname{Tr}_{\partial\Omega}\bv, \quad \bg_j:=\bv_j|_{\partial\Omega}.
\]
The trace theorem gives $p\in W^{1,1}(\partial\Omega)$ (see, e.g., \cite{KJF_book_1977}).
Moreover, continuity of the trace map $H^1(\Omega)\to L^2(\partial\Omega)$ and \eqref{eq:summable-approximation} imply
\begin{equation}\label{eq:summable-boundary-traces}
\sum_{j=1}^{\infty}\|\bg_j-\bg\|_{L^2(\partial\Omega)}^2<\infty.
\end{equation}
Choose the precise representative of $\bg$, and denote the arclength derivative of $p$ by $p'$.  For $t\in\mbR$, define
\[
R_j(t):=\sum_{x\in p^{-1}(t): \, |p'(x)|>0} \frac{|\bg_j(x)-\bg(x)|^2}{|p'(x)|}.
\]
The area formula and \eqref{eq:summable-boundary-traces} give
\begin{align*}
\int_\mbR\sum_{j=1}^{\infty}R_j(t)\,dt
=\sum_{j=1}^{\infty} \int_{\partial\Omega\cap\{|p'|>0\}}|\bg_j-\bg|^2\,d\mcH^1<\infty.
\end{align*}
Consequently, there is an $\mcH^1$-null set $\msN_3\subset\mbR$ such that
\begin{equation}\label{eq:boundary-pointwise-convergence}
R_j(t)\longrightarrow 0 \quad \hbox{ for every } t\in\mbR\setminus\msN_3.
\end{equation}
It follows that, whenever $t\notin \msN_3$,
\begin{equation}\label{eq:boundary-values-converge}
\bg_j(x)\longrightarrow \bg(x) \qquad \text{for every } x \in p^{-1}(t).
\end{equation}

Suppose finally that $\Gamma$ is an arc. The proof of part (\Rn{1}) gives that $\bv_j|_\Gamma$ converges to $\bv|_\Gamma$ in $H^1(\Gamma)$, hence uniformly converges by Sobolev embedding.
Hence, one has
\[
\operatorname{Tr}_{\partial L}\bv_L(x) 
=\lim_{j\to\infty}\bv_j(x)
=\lim_{j\to\infty}\bg_j(x)
=\bg(x)=\bigl(\operatorname{Tr}_{\partial\Omega}\bv\bigr)(x).
\]
So part (\Rn{2}) is also proved.

(\Rn{3})
Suppose that one changes a representative of $\bv$ or $\nabla \bv$ on an $\mcL^2$-null set $E\subset\Omega$. The coarea formula implies that for almost every $t$,
\[
\int_{E\cap P^{-1}(t)\cap\{|\nabla P|>0\}}\frac{1}{|\nabla P|}\,d\mcH^1=0.
\]
On every good level $|\nabla P|$ is continuous and strictly positive, so $\mcH^1(E\cap P^{-1}(t))=0$. Hence the coarea restrictions are independent of all choices of planar representatives, after enlarging $\msN$ by another null set.  

Suppose that $\bg_1$ and $\bg_2$ are two representatives of the trace of $\bv$ such that they only differ on an $\mcH^1$-null set $G\subset\partial\Omega$. The area formula implies that for almost every $t$,
\[
\sum_{x\in G\cap p^{-1}(t)\cap \{ |p'|>0\} } \frac{1}{|p'(x)|}=0.
\]
This means $G\cap p^{-1}(t)\cap \{ |p'|>0\}$ is an empty set.
Consequently, $\bg_1=\bg_2$ on $p^{-1}(t)\cap \{ |p'|>0\}$.
This finishes the proof.  
\end{proof}

\section{Rigidity for non-stagnating flows}\label{sec_rigidity}

This section is devoted to the proof of Theorem \ref{thm_new rigidity} and Corollary \ref{coro_HN rigidity}.
Thanks to Lemma \ref{lemma_reflection}, it suffices to treat the whole plane case, since all other cases can be reduced to it by reflection.

Let $\bv\in H^1_{\rm loc}(\mbR^2)$ and $P\in H^1_{\rm loc}(\mbR^2)$ solve \eqref{Euler}. First, we derive an elliptic equation for $P$ to which the maximum principle applies.

\begin{lemma}\label{lemma_P eq}
Assume for every compact set $K$, there exists $c_K>0$ such that
\begin{equation*}
 |\bv|\ge c_K \qquad \text{a.e. in } K.
\end{equation*}
Then the pressure $P$ solves
\begin{equation}\label{eq_pressure}
 \Div\!\left(\frac{\nabla P}{|\bv|^2}\right)=0  \qquad \text{in } \mcD'(\mbR^2).
\end{equation}
\end{lemma}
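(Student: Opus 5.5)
Since $\bv\in H^1_{\rm loc}$ and $|\bv|\ge c_K>0$ on compacts, the coefficient $1/|\bv|^2$ is locally bounded. Hence $\nabla P/|\bv|^2\in L^2_{\rm loc}$ and the distribution $\Div(\nabla P/|\bv|^2)$ makes sense. The idea is that $\nabla P$ is pointwise parallel to $(\bv^\perp)$ times a scalar, so $\nabla P/|\bv|^2$ can be rewritten as a curl of something built from $\bv/|\bv|$, or equivalently paired against $\nabla\varphi$ via the vorticity equation.

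\textbf{Step 1 (pointwise identity).} Writing the momentum equation as $\nabla P=-\bv\cdot\nabla\bv$ and using $\Div\bv=0$, I will compute a.e.
\[
\bv\cdot\nabla P=-\tfrac12\,\bv\cdot\nabla|\bv|^2 \quad\text{and}\quad \bv^\perp\cdot\nabla P = -\bv^\perp\cdot(\bv\cdot\nabla\bv).
\]
This is the standard Lamb-form decomposition $\bv\cdot\nabla\bv=\nabla\tfrac12|\bv|^2-\omega\bv^\perp$ with $\omega=\partial_1v_2-\partial_2v_1$. It gives
\[
\nabla P=-\nabla\tfrac12|\bv|^2+\omega\,\bv^\perp .
\]
Equivalently, as recorded in the introduction, $\nabla P$ is the vector with $\bv\cdot\nabla P$ and $\bv^\perp\cdot\nabla P$ given by $\pm(v_1\nabla v_2-v_2\nabla v_1)$ components. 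A cleaner route uses the identity $\nabla P=-|\bv|^2\,\bigl(\tfrac{\bv}{|\bv|}\cdot\nabla\bigr)$-type formula: $\bv\cdot\nabla\bv = |\bv|\,\tfrac{\bv}{|\bv|}\cdot\nabla(|\bv|\,\mathbf{n})$ with $\mathbf n=\bv/|\bv|\in H^1_{\rm loc}$ (valid since $|\bv|$ is bounded below locally, so $\mathbf n$ and $1/|\bv|$ are in $H^1_{\rm loc}\cap L^\infty_{\rm loc}$ by the chain rule).

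\textbf{Step 2 (the key rewriting).} I aim to show, a.e.,
\[
\frac{\nabla P}{|\bv|^2}=-\nabla\log|\bv|-(\bv\cdot\nabla\bv)_{\text{rot}}\ldots,
\]
more concretely
\[
\frac{\nabla P}{|\bv|^2}=\nabla^\perp\theta_{\rm loc}-\nabla\log|\bv| \quad\text{with}\quad \nabla^\perp\theta\ \text{replaced by}\ n_1\nabla^\perp n_2-n_2\nabla^\perp n_1 .
\]
Since $\nabla P\perp\bv$-phase considerations aside, the cleanest statement to verify is that $\Div(\nabla P/|\bv|^2)$ pairs to zero with test functions. So I will compute $\nabla P/|\bv|^2=-\mathbf n\cdot\nabla\mathbf n-\mathbf n\,(\mathbf n\cdot\nabla\log|\bv|)$ and then use $\Div\bv=0$, i.e. $\mathbf n\cdot\nabla\log|\bv|=-\Div\mathbf n$. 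This yields
\[
\frac{\nabla P}{|\bv|^2}=-(\mathbf n\cdot\nabla)\mathbf n+\mathbf n\,\Div\mathbf n .
\]
For unit fields in 2D, one has the identity $(\mathbf n\cdot\nabla)\mathbf n-\mathbf n\Div\mathbf n=-\nabla^\perp{}^{*}$-type expression, namely
\[
\mathbf n\,\Div\mathbf n-(\mathbf n\cdot\nabla)\mathbf n=\bigl(\partial_2(n_1n_2)\ldots\bigr)= \operatorname{curl}\text{-form } \bigl(-\partial_2 A,\ \partial_1 A\bigr)
\]
up to terms vanishing by $|\mathbf n|=1$. I will check this by direct expansion using $n_1\partial_in_1+n_2\partial_in_2=0$. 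It gives a divergence-free field $J=n_1\nabla^\perp n_2-n_2\nabla^\perp n_1$, up to sign. Since $J$ is merely $L^2_{\rm loc}$, divergence-freeness is not automatic.

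\textbf{Step 3 (distributional conclusion).} The field $J$ is not a curl of a single function, but it is $\nabla^\perp$ applied componentwise in a Jacobian-like form. To justify $\Div J=0$ in $\mcD'$, I will approximate $\mathbf n$ by mollifications $\mathbf n_\varepsilon\to\mathbf n$ in $H^1_{\rm loc}$. Then $\Div(n_{1,\varepsilon}\nabla^\perp n_{2,\varepsilon}-n_{2,\varepsilon}\nabla^\perp n_{1,\varepsilon})=2\{\nabla n_{1,\varepsilon}\cdot\nabla^\perp n_{2,\varepsilon}\}$ does \emph{not} vanish, so the argument must instead use the Euler equation itself. In weak form, with $\varphi\in C_c^\infty$, I test $-\nabla P=\Div(\bv\otimes\bv)$ against $\nabla\varphi/|\bv|^2$, which is an admissible $H^1_c\cap L^\infty$ test field since $1/|\bv|^2\in H^1_{\rm loc}\cap L^\infty_{\rm loc}$. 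Integrating by parts then gives terms $\int v_iv_j\partial_j(\partial_i\varphi/|\bv|^2)$. The pure second-derivative term $\int n_in_j\partial_{ij}\varphi$ must be shown to cancel against the $\nabla(1/|\bv|^2)$ term using $\Div\bv=0$ and the relation $\bv\cdot\nabla P$ above.

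The main obstacle is precisely this Step 3: rigorously passing the product rule and the identity $\bv\cdot\nabla|\bv|^2/2=-\bv\cdot\nabla P$ through at $H^1$ regularity, where products like $v_iv_j\nabla(1/|\bv|^2)$ are only $L^1_{\rm loc}$. I would handle it by mollifying $\varphi/|\bv|^2$-type test functions and using local boundedness from below plus $H^1\subset L^p_{\rm loc}$ for all $p<\infty$ in 2D, which makes all products integrable.
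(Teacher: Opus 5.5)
\textbf{There is a genuine gap, and it sits exactly at the step you flag as the main obstacle.}

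The final computation in Step 2 is correct: with $\mathbf n=\bv/|\bv|$, the momentum equation and $\Div\bv=0$ give $\nabla P/|\bv|^2=\mathbf n\,\Div\mathbf n-(\mathbf n\cdot\nabla)\mathbf n=-J$, where $J=n_1\nabla^\perp n_2-n_2\nabla^\perp n_1$. The earlier tentative formulas with a $-\nabla\log|\bv|$ term are wrong and should be dropped. This identity holds for every vector field, so $|\mathbf n|=1$ is not needed there. Step 2 uses up the Euler system and reduces the lemma to showing $\Div J=0$ in $\mcD'(\mbR^2)$.

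Step 3 cannot supply this, because it is circular. Using $v_j\partial_j(|\bv|^{-2})=2|\bv|^{-4}\,\bv\cdot\nabla P$ and $\Div\mathbf n=|\bv|^{-2}\,\mathbf n\cdot\nabla P$ (both again just the Euler system), your integration by parts reads
\[
\int\frac{\nabla P\cdot\nabla\varphi}{|\bv|^2}\,dx=\int n_in_j\,\partial_{ij}\varphi\,dx+2\int(\mathbf n\cdot\nabla\varphi)\,\Div\mathbf n\,dx .
\]
Integrating the first term on the right by parts, with $\partial_j(n_in_j)=(\mathbf n\cdot\nabla)n_i+n_i\,\Div\mathbf n$, turns the right-hand side back into $-\int J\cdot\nabla\varphi\,dx$. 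So the cancellation you ask for is literally the open statement $\Div J=0$, and no regularization of the test function changes that.

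What is missing is any genuine use of $|\mathbf n|=1$. The identity really depends on it: for a general field $\mathbf m$ one has $\Div(m_1\nabla^\perp m_2-m_2\nabla^\perp m_1)=-2\det\nabla\mathbf m$.

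\textbf{How to close your route.} Your mollification remark was the right start, but the wrong conclusion. For $\mathbf n_\varepsilon=\mathbf n*\rho_\varepsilon$ you have $\Div J_\varepsilon=-2\det\nabla\mathbf n_\varepsilon$ pointwise. Since $\mathbf n_\varepsilon\to\mathbf n$ in $H^1_{\rm loc}$ with $|\mathbf n_\varepsilon|\le1$, both $J_\varepsilon\to J$ and $\det\nabla\mathbf n_\varepsilon\to\det\nabla\mathbf n$ in $L^1_{\rm loc}$. Hence $\Div J=-2\det\nabla\mathbf n$ in $\mcD'$. Differentiating $|\mathbf n|^2=1$ gives $n_1\nabla n_1+n_2\nabla n_2=0$ a.e. with $\mathbf n\neq0$, so the rows of $\nabla\mathbf n$ are linearly dependent and $\det\nabla\mathbf n=0$ a.e. That the mollified Jacobians do not vanish is irrelevant; only their limit matters.

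\textbf{Comparison with the paper.} The paper takes the route you dismissed when you asserted that $J$ is not a curl of a single function. By the $H^1$ lifting theorem on the simply connected plane, $\mathbf n=(\cos\theta,\sin\theta)$ with $\theta\in H^1_{\rm loc}(\mbR^2)$. Then $J=\nabla^\perp\theta$ and $\nabla P/|\bv|^2=-\nabla^\perp\theta$, which is divergence-free in $\mcD'$ for trivial reasons. The Jacobian argument above is the purely local substitute for that lifting.
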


\begin{proof}
Clearly,
\[
\frac{\bv}{|\bv|} \in H^1_{\rm loc}(\mbR^2).
\]
The standard $H^1$ lifting property for circle-valued maps (see, e.g., \cite{BM_book_2021}) yields an angle $\theta\in H^1_{\rm loc}(\mbR^2)$ such that
\[
\frac{\bv}{|\bv|} =(\cos\theta,\sin\theta).
\]
Direct computation yields
\[
\nabla\theta=\frac{v_1\nabla v_2-v_2\nabla v_1}{|\bv|^2}.
\]
On the other hand, it follows from the Euler system that
\begin{align*}
\nabla^\perp P=&(v_1\partial_1 v_2+v_2\partial_2 v_2, -v_1\partial_1 v_1-v_2\partial_2 v_1)\\
=&(v_1\partial_1 v_2-v_2\partial_1 v_1, v_1\partial_2 v_2-v_2\partial_2 v_1)\\
=&v_1\nabla v_2-v_2\nabla v_1.
\end{align*}
Hence, we obtain
\[
 \frac{\nabla P}{|\bv|^2} =-\nabla^\perp\theta.
\]
Then \eqref{eq_pressure} immediately follows.
\end{proof}

\begin{lemma}\label{lemma_good radii}
Let $1\le q\le2$ and let $f\in W^{1,q}_{\rm loc}(\mbR^2)$ satisfy $\nabla f\in L^q(\mbR^2).$
Then there exists a sequence $r_j\to\infty$ such that, for the Sobolev traces on $\partial B_{r_j}$,
\begin{equation*}
 \osc_{\partial B_{r_j}}f \longrightarrow 0.
\end{equation*}
\end{lemma}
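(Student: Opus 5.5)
Work in polar coordinates and use Fubini together with the one-dimensional Sobolev embedding on circles. For a.e. $r>0$, the restriction of $f$ to $\partial B_r$ lies in $W^{1,q}(\partial B_r)$ and coincides with the Sobolev trace there. Its tangential derivative is $\partial_\theta f/r = \nabla f\cdot \be_\theta$, so it is bounded pointwise by $|\nabla f|$. (This uses the standard absolute continuity on almost every line property, applied in polar coordinates $(r,\theta)$, which is a bi-Lipschitz change of variables on annuli.) On such a circle, since it is a closed curve of length $2\pi r$, the oscillation is bounded by half the total variation, and Hölder's inequality then gives
\[
\osc_{\partial B_r} f\le \int_{\partial B_r}|\nabla f|\,ds \le (2\pi r)^{1-1/q}\Bigl(\int_{\partial B_r}|\nabla f|^q\,ds\Bigr)^{1/q}.
\]
Set $g(r):=\int_{\partial B_r}|\nabla f|^q\,ds$. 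Then $\int_0^\infty g(r)\,dr=\|\nabla f\|_{L^q}^q<\infty$, and the estimate becomes $(\osc_{\partial B_r}f)^q\le C\, r^{q-1}g(r)$.

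The key step is to find radii where $r^{q-1}g(r)$ is small. Suppose instead that $r^{q-1}g(r)\ge\delta>0$ for a.e. $r\ge R_0$. Then $g(r)\ge \delta r^{1-q}$. Since $q\le 2$, we have $1-q\ge -1$, so $\int_{R_0}^\infty r^{1-q}\,dr=\infty$. This contradicts the integrability of $g$. Applying the argument with $\delta=1/j$ and successively larger $R_0$, we obtain good radii $r_j\to\infty$ outside the exceptional null set with $r_j^{q-1}g(r_j)<1/j$. Hence $\osc_{\partial B_{r_j}}f\to0$. Equivalently, one can use $\int_R^{2R} r^{q-1}g(r)\,\frac{dr}{r}\le R^{q-2}\int_R^{2R}g\to0$, which is also fine when $q=2$ because $\int_R^{2R}g\to0$ by tail integrability.

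The main obstacle is the technical point that the restriction of $f$ to almost every circle is absolutely continuous and agrees with the Sobolev trace on $\partial B_r$. I would handle this by approximating $f$ by smooth $f_k\to f$ in $W^{1,q}(B_{R}\setminus B_{R'})$. Fubini in polar coordinates shows that along a subsequence, $f_k|_{\partial B_r}\to f|_{\partial B_r}$ in $W^{1,q}(\partial B_r)$ for a.e. $r$. This mirrors the argument in Lemma \ref{lemma_trace on good level}, with the circles as level sets of $|x|$. In one dimension, $W^{1,q}$ convergence implies uniform convergence, so the oscillation of the continuous representative on the circle is controlled as above. Continuity of the trace operator identifies this limit with the Sobolev trace.
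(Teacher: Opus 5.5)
Your proposal is correct and follows essentially the same route as the paper: set $G_q(r)=\int_{\partial B_r}|\nabla f|^q\,ds$, use the divergence of $\int^\infty r^{1-q}\,dr$ for $q\le 2$ to get radii with $r^{q-1}G_q(r)\to 0$, and finish with H\"older's inequality on the circle. Your additional care in choosing radii outside the exceptional null set and identifying the restriction with the Sobolev trace fills in details the paper leaves implicit. Your dyadic-averaging variant is also valid and gives slightly more, namely good radii in every annulus $B_{2R}\setminus B_R$ for $R$ large.
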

\begin{proof}
Set
\[
G_q(r):=\int_{\partial B_{r}}|\nabla f|^q\,ds.
\]
Using polar coordinates gives
\[
\int_0^\infty G_q(r)\,dr=\int_\Omega|\nabla f|^q\,dx<\infty.
\]
One then readily gets
\begin{equation*}
\liminf_{r\to\infty}r^{q-1}G_q(r)=0.
\end{equation*}
Otherwise, there exist positive constants $M$ and $c$ such that $G_q(r)\ge c r^{1-q}$ for all $r\ge M$ , contradicting the divergence of $\int_M^\infty r^{1-q}\,dr$ for $q\le2$. Choose $r_j\to\infty$ realizing the liminf. Applying H\"older's inequality yields
\begin{align*}
\osc_{\partial B_{r_j}}f
&\le\int_{\partial B_{r_j}}|\nabla f|\,ds
\le (2\pi r_j)^{1-1/q}G_q(r_j)^{1/q}\\
&= (2\pi)^{1-1/q} \big(r_j^{q-1}G_q(r_j)\big)^{1/q}
\rightarrow 0.
\end{align*}
This completes the proof of the lemma.
\end{proof}

\begin{prop}\label{prop_rigidity}
Suppose $a\in L^\infty_{\rm loc}(\mbR^2)$ satisfies $a>0$ a.e. in $\mbR^2$. 
Suppose $P\in H^1_{\rm loc}(\mbR^2)$ satisfies
\begin{equation}\label{a-harmonic}
 \Div(a\nabla P)=0\qquad\text{in }\mathcal D'(\mbR^2).
\end{equation}
If
\begin{equation}\label{integrable-gredient-P}
\nabla P\in L^q(\mbR^2) \quad \text{for some } q\in[1,2],
\end{equation}
then $P$ is constant.
\end{prop}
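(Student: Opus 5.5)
The plan is to combine the good radii of Lemma \ref{lemma_good radii} with a weak maximum principle on large discs. The maximum principle will come from an energy identity that uses only $a>0$ a.e., and never a positive lower bound on $a$.

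\textbf{Step 1 (good radii).} Since $P\in H^1_{\rm loc}(\mbR^2)\subset W^{1,q}_{\rm loc}(\mbR^2)$ for $q\le 2$ and $\nabla P\in L^q(\mbR^2)$, Lemma \ref{lemma_good radii} applies with $f=P$. For a.e. $r$, the trace of $P$ on $\partial B_r$ lies in $W^{1,q}(\partial B_r)$, with tangential derivative the restriction of $\nabla P$, and so it is continuous. In the proof of Lemma \ref{lemma_good radii} we may therefore choose the radii $r_j\to\infty$ among these full-measure radii. Set
\[
M_j:=\max_{\partial B_{r_j}}P, \qquad m_j:=\min_{\partial B_{r_j}}P,
\]
so that $M_j-m_j\to0$.

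\textbf{Step 2 (weak maximum principle on $B_{r_j}$).} Put $\varphi:=(P-M_j)^+$ restricted to $B_{r_j}$. Then $\varphi\in H^1(B_{r_j})$, and its trace on $\partial B_{r_j}$ is $(\operatorname{Tr}P-M_j)^+=0$. Hence $\varphi\in H^1_0(B_{r_j})$. Since $a\in L^\infty(B_{r_j})$ and $\nabla P\in L^2(B_{r_j})$, the field $a\nabla P$ lies in $L^2(B_{r_j})$. The distributional identity \eqref{a-harmonic} therefore extends by density from $C_c^\infty(B_{r_j})$ to test functions in $H^1_0(B_{r_j})$. Testing with $\varphi$ and using $\nabla\varphi=\chi_{\{P>M_j\}}\nabla P$ gives
\[
\int_{B_{r_j}} a\,|\nabla (P-M_j)^+|^2\,dx=0 .
\]
Because $a>0$ a.e., we get $\nabla\varphi=0$ a.e. in $B_{r_j}$, so $\varphi$ is constant on the connected set $B_{r_j}$. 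Since $\varphi\in H^1_0$, that constant is $0$. Thus $P\le M_j$ a.e. in $B_{r_j}$. The same argument with $(m_j-P)^+$ gives $P\ge m_j$.

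\textbf{Step 3 (conclusion).} Step 2 yields $\operatorname{ess\,osc}_{B_{r_j}}P\le M_j-m_j\to0$. Any fixed ball $B_R$ is contained in $B_{r_j}$ for all large $j$, so $\operatorname{ess\,osc}_{B_R}P=0$ for every $R$. Hence $P$ is a.e. constant.

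The main obstacle is the degeneracy of the coefficient: there is no uniform ellipticity, and $a$ is not bounded below. This rules out De Giorgi/Moser-type tools and any continuity of $P$ obtained from the equation. The energy argument in Step 2 avoids this, since it only needs $\int a|\nabla\varphi|^2=0\Rightarrow\nabla\varphi=0$. The technical points to check carefully are the two used there: the extension of \eqref{a-harmonic} to $H^1_0$ test functions, which relies on $a\in L^\infty_{\rm loc}$, and the fact that the boundary values in Lemma \ref{lemma_good radii} are honest continuous Sobolev traces, so that $(P-M_j)^+\in H^1_0(B_{r_j})$.
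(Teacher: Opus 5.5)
Your proof is correct and follows essentially the same route as the paper. Both take good radii from Lemma~\ref{lemma_good radii}, test \eqref{a-harmonic} with a truncation of $P$ lying in $H^1_0(B_{r_j})$, and use only $a>0$ a.e.\ to force that truncation to vanish. The only difference is organizational: you truncate at both $\max_{\partial B_{r_j}}P$ and $\min_{\partial B_{r_j}}P$ to get $\operatorname{ess\,osc}_{B_{r_j}}P\le\osc_{\partial B_{r_j}}P\to0$ directly, which is slightly cleaner than the paper's contradiction argument with its case split on the limit $\ell\in[-\infty,+\infty]$ of the boundary averages.
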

\begin{proof}
By Lemma~\ref{lemma_good radii}, there are radii $r_j\to\infty$ such that
\[
 \osc_{\partial B_{r_j}}P\to0.
\]
Let
\[
 c_j:=\fint_{\partial B_{r_j}}P\,ds.
\]
After taking a subsequence, $c_j$ converges to some $\ell\in [-\infty,+\infty]$.

Assume that $P$ is not constant.  We show that for some sufficiently large $j$ there exists $t\in\mbR$ such that either
\begin{equation}\label{case-less}
 P\ge t\quad\text{on }\partial B_{r_j},
 \qquad |\{P<t\}\cap B_{r_j}|>0,
\end{equation}
or
\begin{equation}\label{case-greater}
 P\le t\quad\text{on }\partial B_{r_j},
 \qquad |\{P>t\}\cap B_{r_j}|>0.
\end{equation}
Here the boundary inequalities are understood for the continuous representatives of the one-dimensional Sobolev traces.

If $\ell\in\mbR$, nonconstancy implies that for some $\delta>0$ either $\{P<\ell-2\delta\}$ or $\{P>\ell+2\delta\}$ has positive measure.  Since $c_j\to\ell$ and the boundary oscillation tends to zero, choosing $t=\ell-\delta$ in the first case or $t=\ell+\delta$ in the second gives \eqref{case-less} or \eqref{case-greater}.  If $\ell=+\infty$, choose $t$ so that $\{P<t\}$ has positive measure; then \eqref{case-less} holds for all sufficiently large $j$.  The case $\ell=-\infty$ is analogous.

Suppose \eqref{case-less} holds and put
\[
 w:=(t-P)_+\in H^1(B_{r_j}).
\]
Because the trace of $w$ vanishes on $\partial B_{r_j}$, one has $w\in H^1_0(B_{r_j})$.  Since $a\in L^\infty(B_{r_j})$, $w$ is an admissible test function in \eqref{a-harmonic}.  Therefore
\[
 0=\int_{B_{r_j}}a\nabla P\cdot\nabla w\,dx
 =-\int_{\{P<t\}\cap B_{r_j}}a|\nabla P|^2\,dx.
\]
As $a>0$ a.e., it follows that $\nabla w=0$ a.e.  Since $w\in H^1_0(B_{r_j})$, Poincar\'e's inequality gives $w=0$, contradicting the second part of \eqref{case-less}.  The case \eqref{case-greater} is identical with $w=(P-t)_+$.  Thus $P$ must be constant.
\end{proof}

\begin{remark}\label{remark_two is sharp}
The Liouville theorem is sharp at the exponent $q=2$.
Indeed, define $a, P\in C^\infty(\mathbb R^2)$ by
\[
a(x)=e^{|x|^2/2}(1+|x|^2)^{3/2}, \quad P(x)=\frac{x_1}{\sqrt{1+|x|^2}}.
\]
Clearly, $a\ge1$. Direct differentiation gives
\[
\operatorname{div}(a\nabla P)=0
\qquad\text{in }\mathbb R^2.
\]
We have
\[
|\nabla P|^2=\frac{(1+x_2^2)^2+x_1^2x_2^2}
{(1+|x|^2)^3}.
\]
It follows that
\[
|\nabla P|\le\frac{1}{\sqrt{1+|x|^2}}.
\]
Consequently, $\nabla P\in L^q(\mathbb R^2)$ for every $q>2$.
On the other hand, we have
\[
|\nabla P|\ge\frac{1+x_2^2}{(1+|x|^2)^{3/2}}
\ge\frac{1}{4\sqrt{1+|x|^2}}, \qquad \text{for } |x_2|\ge\frac{|x|}{2}.
\]
It follows that $\nabla P\notin L^q(\mathbb R^2)$ for any $q\in[1,2]$.    

In view of this counterexample, we propose the following problem: does Theorem \ref{thm_new rigidity} still hold if the condition \eqref{integrable-gredient-P} is replaced by $\nabla P\in L^q(\Omega)$ for some $q\in (2,\infty)$?
\end{remark}

\begin{proof}[Proof of Theorem \ref{thm_new rigidity}]
Combining Lemma \ref{lemma_P eq} and Proposition \ref{prop_rigidity} gives the proof of Theorem \ref{thm_new rigidity}.
\end{proof}


We give a new proof of Hamel-Nadirashvili's rigidity theorem for two-dimensional steady incompressible Euler flows in the plane, the half-plane, and an infinitely long strip, under a two-sided positive bound on the speed
\begin{equation}\label{twosided}
  0<m\le |\bv(x)|\le M<\infty.
\end{equation}
Hamel-Nadirashvili's arguments are based on global streamline geometry, semilinear elliptic equations for the stream function, and, in the plane, a logarithmic growth estimate for the velocity angle.
Our proof reduces to establishing the elliptic regularity estimate $P\in\BMO(\mbR^2)$, which can then be lifted to $\nabla P\in L^2(\mbR^2)$.

For $f\in L^1_{\mathrm{loc}}(\mbR^2)$, write
\[
[f]_{\BMO}:=\sup_B\fint_B|f-f_B|\,dx,
\quad f_B:=\fint_B f\,dx:=\frac{1}{|B|}\int_B f\,dx,
\]
where the supremum is over disks.
Let us recall the following consequence of the John–Nirenberg inequality (see, e.g., \cite{Grafakos_modern_2009}),
\begin{equation}\label{JN}
 \left(\fint_B|f-f_B|^2\,dx\right)^{1/2}
 \le C_0 [f]_{\BMO}.
\end{equation}

In case of $\Omega=\mbR^2$, Corollary \ref{coro_HN rigidity} will follow from Theorem \ref{thm_new rigidity} together with the following two standard lemmas.

\begin{lemma}\label{lemma_bdd to BMO}
Let $\bF=(F_{ij})$ be a $2\times2$ matrix with entries in $L^\infty(\mbR^2)$. Suppose $P\in L^1_{\mathrm{loc}}(\mbR^2)$ solves
\begin{equation}\label{momentum-tensor}
\Div\,\bF+\nabla P=0
\quad\text{in }\mathcal D'(\mbR^2).
\end{equation}
Then $P\in\BMO(\mbR^2)$, and there exists a constant $C_1$ such that
\[
[P]_{\BMO}\le C_1\|\bF\|_{L^\infty}.
\]    
\end{lemma}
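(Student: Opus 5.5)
The plan is to invert the divergence in \eqref{momentum-tensor} with Riesz transforms. Then $P$ is identified, up to an additive constant, with a Calder\'on--Zygmund image of $\bF$, which lies in $\BMO$ by the endpoint $L^\infty\to\BMO$ bound.

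Taking the divergence of \eqref{momentum-tensor} formally gives $\Delta P=-\sum_{i,j}\partial_i\partial_jF_{ij}$. This suggests the candidate
\[
Q:=-\sum_{i,j=1}^2 R_iR_jF_{ij},
\]
where $R_i$ are the Riesz transforms. Since $F_{ij}\in L^\infty$ is not integrable, I will define $R_iR_j$ on $L^\infty$ in the standard way, as an element of $\BMO$ modulo constants. Concretely, $R_iR_jf$ is $c\,\delta_{ij}f$ plus a principal value integral against a homogeneous kernel of degree $-2$ with mean zero on circles, with the kernel truncated as $K(x-y)-K(x_0-y)\chi_{|y|>1}$ so that the integral converges. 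The classical Fefferman--Stein / Peetre bound (see \cite{Grafakos_modern_2009}) then gives
\[
[Q]_{\BMO}\le C_1\|\bF\|_{L^\infty}.
\]

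Next I will show that $P-Q$ is constant. Set $\bg:=\nabla P+\Div\bF$, which vanishes by hypothesis. Using $R_iR_j=-\partial_i\partial_j(-\Delta)^{-1}$ in the sense of distributions modulo polynomials, $Q$ satisfies $\Delta Q=-\partial_i\partial_jF_{ij}$ in $\mathcal D'$. Hence $\Delta(P-Q)=\Div(\nabla P+\Div\bF)=0$, so $h:=P-Q$ is harmonic. However, harmonicity alone is not enough; I also need $\nabla h$ to be controlled.

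This is the main obstacle: a harmonic function is only constant if it has suitable growth, and $P$ is merely $L^1_{\rm loc}$. The remedy is to work with the gradient equation directly rather than its divergence. I claim $\nabla Q=-\Div\bF$ in $\mathcal D'$ modulo the ambiguity of the $\BMO$ realization. To check this, pair with $\varphi\in C_c^\infty$ and move the operators onto $\varphi$. The relevant identity is
\[
\partial_k\sum_{i,j}R_iR_j F_{ij}=\sum_j\partial_jF_{kj}
\]
as distributions. It holds by Fourier transform: the symbol identity $\xi_k\xi_i\xi_j/|\xi|^2$ summed against $F_{ij}$ matches after the Schwarz-type symmetrization. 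To justify this for $L^\infty$ data, I approximate $F_{ij}$ by $F_{ij}\chi_{B_R}$ and pass to the limit, using that the truncated-kernel realizations converge in $\BMO$ modulo constants, hence in $L^1_{\rm loc}$ after subtracting suitable constants.

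A caveat: for a general nonsymmetric $\bF$ the identity requires care. If it fails, it suffices to note that $\nabla h=\nabla P-\nabla Q$ is a distribution of the form $\Div\bF+\Div\widetilde{\bF}$ plus a gradient of a $\BMO$ function. Either way, $\nabla h$ is a finite sum of derivatives of functions of polynomially bounded $L^1$ averages, so $h$ is harmonic of at most polynomial growth, hence a polynomial. Its gradient is then a polynomial vector field of the form $\partial(\text{bounded})+\partial(\BMO)$. Testing against rescaled bumps $\varphi(x/R)$ forces this gradient to vanish, so $h$ is constant.

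Therefore $[P]_{\BMO}=[Q]_{\BMO}\le C_1\|\bF\|_{L^\infty}$.
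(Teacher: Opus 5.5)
Your strategy is the paper's: realize $\sum_{i,j}\mcR_i\mcR_jF_{ij}$ in $\BMO$ through the endpoint $L^\infty\to\BMO$ bound, and show that $P$ differs from it by a constant. The paper only asserts that $P_0=\sum_{i,j}\mcR_i\mcR_jF_{ij}$ solves \eqref{momentum-tensor}, and then uses uniqueness up to constants (the equation prescribes $\nabla P$). Your Liouville step is exactly what justifies that assertion, and in its ``fallback'' form it is correct. $\nabla h=-\Div\bF-\nabla Q$ is a sum of first derivatives of $L^\infty$ and $\BMO$ functions, hence tempered. Being harmonic, its Fourier transform is supported at $0$, so it is a polynomial. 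Now test $\partial_kh(R\,\cdot)$ against $\varphi\in C_c^\infty(B_1)$, using $\int\partial_j\varphi=0$ to subtract averages over $B_R$. This gives $\bigl|\int\partial_kh(Ry)\varphi(y)\,dy\bigr|\lesssim R^{-1}$, which no nonzero polynomial satisfies.

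Two corrections are needed.

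First, the identity $\partial_k\sum_{i,j}\mcR_i\mcR_jF_{ij}=\sum_j\partial_jF_{kj}$, which you propose to verify by Fourier transform and truncation, is false for general $\bF$. Symmetry of $\bF$ does not rescue it. Take $F_{12}=F_{21}=f$ and $F_{11}=F_{22}=0$: the $k=1$ components have symbols $-2i\xi_1^2\xi_2/|\xi|^2$ and $i\xi_2$, which do not agree even up to sign. The identity holds away from $\xi=0$ exactly when $\Div\bF$ is a gradient, i.e.\ only after invoking the hypothesis that $P$ exists. Replacing $\bF$ by $\bF\chi_{B_R}$ destroys that compatibility. So delete that paragraph; the fallback argument is the proof.

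Second, with the standard convention $\widehat{\mcR_jf}=-i\xi_j\hat f/|\xi|$ one has $\mcR_i\mcR_j=\partial_i\partial_j(-\Delta)^{-1}$, not $-\partial_i\partial_j(-\Delta)^{-1}$. Hence $\Delta\sum_{i,j}\mcR_i\mcR_jF_{ij}=-\sum_{i,j}\partial_i\partial_jF_{ij}=\Delta P$, so the candidate should carry the paper's plus sign. With your $Q$, it is $P+Q$ that is harmonic. This does not affect the $\BMO$ bound.

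Finally, justify the formula for $\Delta Q$ on the $\BMO$ realization by pairing with $\Delta\varphi$. This function has mean zero and lies in the Hardy space, and $\mcR_i\mcR_j$ is self-adjoint, so the pairing moves the operator onto $\Delta\varphi$.
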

\begin{proof}
Clearly, the solution to \eqref{momentum-tensor} is unique up to an additive constant.
For existence, let $\mcR_i$ denote the $i$-th Riesz transform and define
\[
 P_0:=\sum_{i,j=1}^2 \mcR_i\mcR_j F_{ij}.
\]
Second-order Riesz transforms map $L^\infty$ to BMO (see, e.g., \cite{Grafakos_modern_2009}). Then $P_0$ is a solution to \eqref{momentum-tensor} with
\begin{equation*}
[P_0]_{\BMO}\le C_1\|\bF\|_{L^\infty}.
\end{equation*}
This proves the lemma.
\end{proof}

\begin{lemma}\label{lemma_BMO to H1}
Let $\rho$ satisfy
\[
0<\underline{\rho}\le\rho\le\bar{\rho}<\infty.
\]
Let $P\in \BMO(\mbR^2)$ solve
\begin{align}\label{eq_divrhoP}
\Div(\rho\nabla P)=0.
\end{align}
Then $\nabla P\in L^2(\mbR^2)$.
\end{lemma}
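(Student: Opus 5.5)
The plan is to combine the Caccioppoli inequality for \eqref{eq_divrhoP} with the John--Nirenberg bound \eqref{JN}, and then to improve the scaling through a reverse Hölder/Gehring (or Meyers-type) gain of integrability. Fix a disk $B_R=B_R(x_0)$. Testing \eqref{eq_divrhoP} with $\eta^2(P-P_{B_{2R}})$, where $\eta$ is a cutoff equal to $1$ on $B_R$ and supported in $B_{2R}$ with $|\nabla\eta|\le 2/R$, gives
\[
\int_{B_R}|\nabla P|^2\,dx\le \frac{C(\underline\rho,\bar\rho)}{R^2}\int_{B_{2R}}|P-P_{B_{2R}}|^2\,dx\le C\,C_0^2[P]_{\BMO}^2,
\]
using \eqref{JN} and $|B_{2R}|\sim R^2$. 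Thus $\int_{B_R}|\nabla P|^2\le C[P]_{\BMO}^2$ uniformly in $R$ and $x_0$. Since this bound is scale invariant in two dimensions, letting $R\to\infty$ by monotone convergence already gives $\nabla P\in L^2(\mbR^2)$ with $\|\nabla P\|_{L^2}\le C[P]_{\BMO}$.

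The only points to check are these. First, $P\in H^1_{\rm loc}$, which is implicit in the meaning of a weak solution of \eqref{eq_divrhoP}. Second, the test function $\eta^2(P-c)$ lies in $H^1_0(B_{2R})$, so it is admissible. Third, the Caccioppoli computation
\[
\int\rho\,\eta^2|\nabla P|^2=-2\int\rho\,\eta(P-c)\nabla\eta\cdot\nabla P
\]
is closed by Young's inequality, which produces the factor $(\bar\rho/\underline\rho)^2$. No reverse Hölder argument is needed after all: the dimension two makes $R^{-2}|B_{2R}|$ bounded.

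I do not expect a real obstacle here. The only mildly delicate step is using \eqref{JN} with the average over $B_{2R}$ and the BMO seminorm taken over disks, which matches the paper's definition. The key structural fact is that in $\mbR^2$ the Dirichlet energy on $B_R$ scales like $R^{0}$ times the mean square oscillation of $P$ on $B_{2R}$.
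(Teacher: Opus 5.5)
Your argument is correct and is essentially the paper's proof. Both test \eqref{eq_divrhoP} with $\phi^2(P-P_{B_{2R}})$ and then apply the John--Nirenberg bound \eqref{JN} on $B_{2R}$, which gives $\int_{B_R}|\nabla P|^2\,dx\le C(\underline\rho,\bar\rho)[P]_{\BMO}^2$ uniformly in $R$ because $R^{-2}|B_{2R}|$ is scale invariant in two dimensions. As you note yourself, the reverse H\"older/Gehring step is unnecessary, and whether the constant carries $\bar\rho/\underline\rho$ (the paper's $\rho$-weighted Cauchy--Schwarz) or its square (your Young's inequality) does not matter.
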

\begin{proof}
Choose $\phi\in C_c^\infty(B_{2R})$ such that $0\le\phi\le1$, $\phi=1$ on $B_R$, and $|\nabla\phi|\le 2/R$.
Testing \eqref{eq_divrhoP} with $\phi^2(P-P_{B_{2R}})$ and using Cauchy–Schwarz inequality gives
\begin{align*}
\int_{\mbR^2}\phi^2 \rho |\nabla P|^2\,dx
=&-2\int_{\mbR^2}\rho \phi (P-P_{B_{2R}}) \nabla P\cdot\nabla\phi\,dx\\
\le& 2\left( \int_{\mbR^2}\phi^2 \rho |\nabla P|^2\,dx \right)^{\frac12}
\left( \int_{\mbR^2}\rho|\nabla\phi|^2 |P-P_{B_{2R}}|^2\,dx \right)^{\frac12}.
\end{align*}
This, together with the two‑sided boundedness of $\rho$, implies that
\begin{align*}
\int_{B_R}|\nabla P|^2\,dx
\le \frac{16 \bar{\rho}}{\underline{\rho} R^2}\int_{B_{2R}}|P-P_{B_{2R}}|^2\,dx
=\frac{64\pi \bar{\rho}}{\underline{\rho}}\fint_{B_{2R}}|P-P_{B_{2R}}|^2\,dx.
\end{align*}
Then by \eqref{JN}, one concludes that
\begin{align*}
\int_{B_R}|\nabla P|^2\,dx
\le \frac{64\pi \bar{\rho} C_0^2}{\underline{\rho}} [P]_{\BMO}^2
\end{align*}
for every $R>0$. Hence $\nabla P\in L^2(\mbR^2)$.
\end{proof}

\begin{proof}[Proof of Corollary \ref{coro_HN rigidity} in $\mbR^2$]
First,
\begin{align*}
-\nabla P=\Div(\bv\otimes\bv).
\end{align*}
Since $\bv$ is bounded, it follows from Lemma \ref{lemma_bdd to BMO} that $P\in\BMO(\mbR^2)$.
Next, recall
\begin{align*}
\Div(|\bv|^{-2}\nabla P)=0.
\end{align*}
Applying Lemma \ref{lemma_BMO to H1} implies that $\nabla P\in L^2(\mbR^2)$.
Finally, Corollary \ref{coro_HN rigidity} follows from Theorem \ref{thm_new rigidity}.
\end{proof}


\section{Asymptotics of pressure}\label{sec_pressure asymptotics}

In this section, we study the far-field asymptotics of the pressure for steady Euler flows with finite total curvature. 
More precisely, we show that $P$ converges to a constant at every unbounded end of the domain.
Unlike the rigidity theorem in Section~\ref{sec_rigidity}, the result below allows stagnation points.

For any finite-Morse-index solution to the Allen-Cahn equation in $\mbR^2$, it is known that the $P$-function has a constant limit as $|x|\to\infty$.
This is because finite-Morse-index solutions are stable outside a compact set, and stable entire solutions are one-dimensional (i.e., depend on one variable only); see, e.g., \cite{GG_MA_1998,WW_CPAM_2019} for the details.
Our result is more difficult to prove for at least two reasons.
First, the Euler system \eqref{Euler} is broader than the class of autonomous semilinear elliptic equations in two dimensions.
Second, the finite-total-curvature condition is weaker than the finite-Morse-index condition. In fact, a one-dimensional periodic solution to the Allen-Cahn equation has zero total curvature, but its Morse index is infinite.

The main result of this section is stated as follows.

\begin{prop}\label{prop_pressure-asymptotics}
Suppose $\Omega\in\{\mbR^2, \mbR^2_+, \Omega_\infty\}$. Let $\bv\in H^1_{\rm loc}(\Bar{\Omega})\cap L^\infty(\Omega)$ and $P\in H^1_{\rm loc}(\Bar{\Omega})$ be a solution of \eqref{Euler} with finite total curvature. In the half-plane and strip cases, assume the slip condition \eqref{SBC} in the sense of trace. 
\begin{enumerate}[label=(\roman*)]
\item If $\Omega=\mbR^2$ or $\mbR^2_+$, then $P$ is bounded and the limit
\[
\lim_{|x|\to\infty,x\in\Bar{\Omega}}P(x)
\]
exists.

\item If $\Omega=\Omega_\infty$, then $P$ is bounded and both limits
\[
\lim_{x_1\to-\infty}P(x_1,x_2) \quad and \quad
\lim_{x_1\to+\infty}P(x_1,x_2)
\]
exist uniformly for $x_2\in[-1,1]$.
\end{enumerate}
\end{prop}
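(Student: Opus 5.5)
The plan is to combine three facts: the $L^2$ integrability of $\nabla P$, the continuity of $P$, and the coarea representation \eqref{coarea1} of the total curvature. Together they should show that a nonvanishing oscillation of $P$ far out forces a fixed amount of total curvature into disjoint far-away regions, which contradicts $\mcT(\bv,\Omega)<\infty$.

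By Lemma \ref{lemma_reflection}, the half-plane and strip cases can be transferred to $\mbR^2$. The reflection at most doubles the total curvature on each piece, and for the strip the reflected flow is periodic in $x_2$, so there the relevant "ends" are $x_1\to\pm\infty$. Lemma \ref{lemma_pressure in W21} then gives $P\in W^{2,1}_{\rm loc}\cap C$. Next, the identity $\nabla^\perp P=v_1\nabla v_2-v_2\nabla v_1$ from the proof of Lemma \ref{lemma_P eq} holds without any nonstagnation assumption, and its right-hand side vanishes a.e. on $\{\bv=0\}$. Hence
\[
\int_\Omega|\nabla P|^2\,dx\le\|\bv\|_{L^\infty}^2\,\mcT(\bv,\Omega)<\infty .
\]

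With $\nabla P\in L^2$ in hand, Lemma \ref{lemma_good radii} provides radii $r_j\to\infty$ with $\osc_{\partial B_{r_j}}P\to0$. In the strip the analogue uses good vertical segments $\{x_1=\pm s_j\}$: there, since $\int_{\mbR}\int_{-1}^1|\nabla P|^2\,dx_2\,dx_1<\infty$, one gets $\int_{-1}^1|\partial_2P|(s,x_2)\,dx_2\to0$ along a sequence. Passing to a subsequence, the boundary means $c_j$ converge to some $\ell\in[-\infty,\infty]$.

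The core step is the following claim: if $\Gamma$ is a closed loop component of a good level set $P^{-1}(t)$ (good in the sense of Lemmas \ref{lemma_BKK_Sard}--\ref{lemma_trace on good level}), then
\[
\int_\Gamma\left|\nabla\frac{\bv}{|\bv|}\right|ds\ge 2\pi .
\]
The reason is that $\nabla P/|\bv|^2=-\nabla^\perp\theta$ on $\{\bv\neq0\}$. So $\nabla\theta$ is tangent to $\Gamma$, and $\partial_s\theta=\pm|\nabla P|/|\bv|^2$ with one fixed sign along $\Gamma$, because $\nabla P\ne0$ and is continuous on good levels. Consequently $\theta$ is strictly monotone along $\Gamma$. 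If $\Gamma$ avoids stagnation points, the total increment is a nonzero multiple of $2\pi$.

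Granting the claim, I would argue by contradiction. Suppose $x_k\to\infty$ and $P(x_k)\ge\ell+2\delta$ (the other cases, including $\ell=\pm\infty$, are symmetric). Choose good radii $r_{j_k}<|x_k|<r_{j_k'}$ on whose circles $P<\ell+\delta$. For a.e. $t\in(\ell+\delta,\ell+2\delta)$, the component of $\{P>t\}$ containing $x_k$ lies in the annulus between these circles. Its boundary therefore contains a closed loop of $P^{-1}(t)$ inside that annulus, and the coarea formula \eqref{coarea1} gives
\[
\mcT\bigl(\bv,B_{r_{j_k'}}\setminus B_{r_{j_k}}\bigr)\ge2\pi\delta .
\]
Passing to a subsequence, these annuli are pairwise disjoint, which contradicts finiteness of $\mcT$. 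This contradiction yields both the uniform limit at infinity and the boundedness of $P$; with $\ell=\pm\infty$ the same argument contradicts finiteness outright. The strip case is identical, using the rectangles between good segments, separately at each end.

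I expect the main obstacle to be justifying the $2\pi$ lower bound when the loop $\Gamma$ meets stagnation points, and doing so in the $H^1$ setting. My first attempt would be to show that for a.e. $t$ the set $\Gamma\cap\{\bv=0\}$ is $\mcH^1$-null and that $\bv_\Gamma\in H^1(\Gamma)$ is continuous (Lemma \ref{lemma_trace on good level}). Then $\theta$ is defined on $\Gamma$ minus a closed set and is monotone on each arc of that set. Near a zero of $\bv_\Gamma$, the curvature identity $|\nabla P|^2/|\bv|^2$ with $\nabla P\ne0$ forces a blow-up of $\partial_s\theta$. The hope is that this makes the phase increment infinite, or at least $\ge\pi$ per zero, by using Bernoulli-type constraints from $\bv\cdot\nabla(P+|\bv|^2/2)=0$. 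Should this fail, I would instead choose $t$ outside the image under $P$ of the stagnation set. That requires showing that this image has measure zero on good levels, which is plausible because $\nabla^\perp P=v_1\nabla v_2-v_2\nabla v_1$ vanishes where $\bv=0$, so that Lemma \ref{lemma_BKK_Sard} combined with a Sard-type argument applies.
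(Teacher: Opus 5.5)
Your architecture is sound and close to the paper's. You get $\nabla P\in L^2$ from boundedness and finite total curvature, take good radii from Lemma \ref{lemma_good radii}, use the coarea representation with a $2\pi$ bound per closed level loop, and conclude through disjoint far-away annuli. The paper packages the same mechanism as Lemmas \ref{lemma:oscP-annulus} and \ref{lemma:distP}, and reflecting the half-plane into $\mbR^2$ is a legitimate shortcut around its wall-to-wall analysis.

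The gap is exactly the step you flag and leave open: you never show that a loop $\Gamma$ of a good level set carries no zero of the trace $\bv_\Gamma$. Without this the winding argument fails. Off a zero, the phase lives only on an open arc, and its increment need not be a multiple of $2\pi$; a priori $\bv_\Gamma(s)\approx s\,\be$ could simply reverse direction. Your proposed fixes do not close this. The Bernoulli relation plays no role here. The Sard-type route does not work as stated, because $\nabla^\perp P=v_1\nabla v_2-v_2\nabla v_1$ vanishes only $\mcL^2$-a.e. on $\{\bv=0\}$, which says nothing pointwise on the $\mcL^2$-null curve $\Gamma$. Via coarea it gives at most $\mcH^1(\Gamma\cap\{\bv=0\})=0$ for a.e. $t$, i.e. ``null'', whereas the winding argument needs ``empty''.

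The missing idea is a short one-dimensional $H^1$ argument along the curve (the paper's Lemma \ref{lemma:good-level-velocity}). On a good level, $|\nabla P|$ is continuous with $|\nabla P|\ge c_\Gamma>0$ on $\Gamma$. Restrict the identity above to $\Gamma$ with orientation $\btau=\nabla^\perp P/|\nabla P|$, and use $\partial_s\bv_\Gamma=(\nabla\bv)|_\Gamma\btau$ from Lemma \ref{lemma_trace on good level}. This gives $|\nabla P|=v_1\partial_s v_2-v_2\partial_s v_1$ $\mcH^1$-a.e. on $\Gamma$. If $\bv_\Gamma(s_0)=0$, then $|\bv_\Gamma(s)|\le|s-s_0|^{1/2}\|\partial_s\bv_\Gamma\|_{L^2(\Gamma)}$, hence
\[
c_\Gamma\le|\bv_\Gamma(s)|\,|\partial_s\bv_\Gamma(s)|\le|s-s_0|^{1/2}\,\|\partial_s\bv_\Gamma\|_{L^2(\Gamma)}\,|\partial_s\bv_\Gamma(s)|.
\]
So $|\partial_s\bv_\Gamma|^2\gtrsim|s-s_0|^{-1}$ near $s_0$, contradicting $\partial_s\bv_\Gamma\in L^2(\Gamma)$. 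Equivalently, $|\nabla P|/|\bv_\Gamma|^2\gtrsim|s-s_0|^{-1}$ is not integrable. This is the quantitative form of the blow-up you anticipated, and it comes from the $C^{1/2}$ bound on the $H^1$ trace, not from Bernoulli. Hence good levels contain no stagnation points, $\partial_s\theta=|\nabla P|/|\bv_\Gamma|^2>0$ on every loop, and the $2\pi$ bound holds without exceptions.

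A smaller point concerns the strip. Rectangles between good segments also contain wall-to-wall level arcs, which are not loops, so the case is not ``identical''. You must either pass to the reflected flow on the cylinder $\mbR\times(\mbR/4\mbZ)$, where these arcs close up into loops and $\mcT$ doubles, or prove the $\pi$ bound for such arcs from the slip condition, as in Lemma \ref{lemma:coarea-int-w2w}.
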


The key point of the proof is a geometric interpretation of the total curvature as the accumulated phase increment of the flow along the pressure level curves.

\begin{lemma}\label{lemma:int_levelP}
Let $\bv\in H^1(\Omega)$ and $P\in H^1(\Omega)$ be a solution of \eqref{Euler} with finite total curvature. Then
\begin{equation}\label{eq:weak-coarea-pressure}
\mcT(\bv,\Omega)
=\int_{\mbR} \left( \int_{P^{-1}(t)\cap\{|\bv|>0\}}\frac{|\nabla P|}{|\bv|^2}\,d\mcH^1\right)dt.
\end{equation}
\end{lemma}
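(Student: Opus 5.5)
Our plan is to reduce \eqref{eq:weak-coarea-pressure} to the Sobolev coarea formula applied to $P$ with a suitable nonnegative weight. The pointwise input is the identity $\nabla^\perp P=v_1\nabla v_2-v_2\nabla v_1$, which holds a.e.\ and was derived in the proof of Lemma~\ref{lemma_P eq}. That computation uses only the Euler system, not the absence of stagnation points. Consequently $|v_1\nabla v_2-v_2\nabla v_1|^2=|\nabla P|^2$ a.e., and the integrand of $\mcT(\bv,\Omega)$ equals
\[
\frac{|\nabla P|^2}{|\bv|^2}\,\mathbf 1_{\{|\bv|>0\}} \qquad\text{a.e. in }\Omega .
\]
We will take the domain $\Omega$ in the lemma to be a bounded Lipschitz domain, as in the surrounding lemmas, or else exhaust by such domains and conclude by monotone convergence.

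Next we apply the coarea formula for Sobolev functions \cite{MSZ_TAMS_2003} to $P$, using the nonnegative measurable weight $g:=|\nabla P|\,|\bv|^{-2}\mathbf 1_{\{|\bv|>0\}}$. For $P\in W^{1,1}_{\rm loc}$ with its precise representative, this formula reads
\[
\int_\Omega g\,|\nabla P|\,dx=\int_{\mbR}\int_{P^{-1}(t)} g\,d\mcH^1\,dt
\]
for every nonnegative Borel $g$. Substituting our choice of $g$ gives exactly \eqref{eq:weak-coarea-pressure}. The set $\{\nabla P=0\}$ causes no difficulty on either side: on the left the integrand vanishes there, and on the right, for a.e.\ $t$, the level set $P^{-1}(t)$ meets $\{\nabla P=0\}$ in an $\mcH^1$-null set, which is part of the coarea statement. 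Since both sides are integrals of nonnegative functions, no integrability issue arises; finiteness of $\mcT$ then says that the inner integral is finite for a.e.\ $t$.

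The main obstacle is well-definedness. The weight $g$ is defined only up to Lebesgue-null sets, because $\bv$ and $\nabla P$ are merely Sobolev functions. We therefore have to check that the inner integral in \eqref{eq:weak-coarea-pressure} does not depend on the representative for a.e.\ $t$. We will handle this exactly as in Lemma~\ref{lemma_trace on good level}(iii). If $E$ is Lebesgue-null, the coarea formula applied to $\mathbf 1_E$ gives $\mcH^1(E\cap P^{-1}(t)\cap\{|\nabla P|>0\})=0$ for a.e.\ $t$. In regularity terms, $P$ actually lies in $W^{2,1}_{\rm loc}$ by Lemma~\ref{lemma_pressure in W21}, after reflection when needed via Lemma~\ref{lemma_reflection}. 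Hence on a.e.\ level $\nabla P$ is continuous and nonzero by Lemma~\ref{lemma_C1_approx}, and we may use the traces of $\bv$ on good level curves supplied by Lemma~\ref{lemma_trace on good level}. With these representatives fixed, the formula is unambiguous, and the proof amounts to this one coarea computation.
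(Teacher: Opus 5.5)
Your proposal is correct and matches the paper's proof: it uses the a.e.\ identity $\nabla^\perp P=v_1\nabla v_2-v_2\nabla v_1$ to write $\mcT(\bv,\Omega)=\int_{\{|\bv|>0\}}|\nabla P|^2/|\bv|^2\,dx$, then applies the Sobolev coarea formula to the continuous $W^{2,1}_{\rm loc}$ representative of $P$. Your extra discussion of independence from the choice of representatives spells out what the paper leaves implicit here and handles separately in Lemma~\ref{lemma_trace on good level}(iii).
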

\begin{proof}
The Euler system \eqref{Euler} implies
\begin{align*}
\nabla^\perp P=v_1\nabla v_2-v_2\nabla v_1.
\end{align*}
Hence
\[
\mcT(\bv,\Omega)=\int_{\{|\bv|>0\}}\frac{|\nabla P|^2}{|\bv|^2}\,dx.
\]
Applying the Sobolev coarea formula \cite{MSZ_TAMS_2003,PR_Selecta_2020} to the continuous $W^{2,1}_{\rm loc}$ representative of $P$ yields \eqref{eq:weak-coarea-pressure}.
\end{proof}

To estimate the inner integral on the right-hand side of \eqref{eq:weak-coarea-pressure}, we derive additional property of the velocity trace on almost every pressure level. Let $\msN$ be the null set in Lemma \ref{lemma_trace on good level}.

\begin{lemma}\label{lemma:good-level-velocity}
Let $\Omega\subset\mbR^2$ be a bounded Lipschitz domain. Suppose that $\bv\in H^1(\Omega)$ and $P\in H^1(\Omega)\cap W^{2,1}(\Omega)$ solve \eqref{Euler}.
Suppose $t\in \mbR\setminus\msN$ and $\Gamma$ is a connected component of $P^{-1}(t)$. With the orientation
\[
 \btau=\frac{\nabla^\perp P}{|\nabla P|}
\]
on $\Gamma$, it holds that
\begin{equation}\label{eq:tangential-determinant}
|\nabla P|=\bv_\Gamma\cdot\partial_s\bv_\Gamma, \qquad \text{for }\mcH^1\text{-a.e. point of }\Gamma.
\end{equation}
Furthermore, the continuous representative of $\bv_\Gamma$ satisfies
\begin{equation}\label{eq:no-stagnation-good-level}
 |\bv_\Gamma|>0\qquad\text{everywhere on }\Gamma.
\end{equation}
\end{lemma}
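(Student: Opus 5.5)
The plan is to interpret the product in \eqref{eq:tangential-determinant} as the planar determinant (wedge) $\bv_\Gamma\wedge\partial_s\bv_\Gamma:=v_{\Gamma,1}\partial_s v_{\Gamma,2}-v_{\Gamma,2}\partial_s v_{\Gamma,1}$, as the label suggests. The identity cannot hold with the Euclidean dot product. For instance, a radial vortex $\bv=x^\perp f(|x|)$ has circular pressure level sets on which $|\bv|$ is constant, so $\bv\cdot\partial_s\bv=0$ there while $|\nabla P|\ne0$. With the wedge reading, the identity follows from the pointwise relation
\[
\nabla^\perp P=v_1\nabla v_2-v_2\nabla v_1 \quad\text{a.e. in }\Omega,
\]
which was derived in the proof of Lemma \ref{lemma_P eq} from \eqref{Euler} and $\Div\bv=0$. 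That computation is purely algebraic and uses only a.e. values of $\bv$ and $\nabla\bv$, so it holds for $H^1$ solutions. Contracting with $\btau=\nabla^\perp P/|\nabla P|$ gives, a.e. in $\{|\nabla P|>0\}$,
\[
(v_1\nabla v_2-v_2\nabla v_1)\cdot\btau=\frac{|\nabla^\perp P|^2}{|\nabla P|}=|\nabla P| .
\]

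The main step is transferring this a.e. identity in $\Omega$ to $\mcH^1$-a.e. points of $\Gamma$. Let $E\subset\Omega$ be the $\mcL^2$-null set where the identity fails. By the coarea argument in Lemma \ref{lemma_trace on good level}(iii), after enlarging $\msN$ we have $\mcH^1(E\cap P^{-1}(t))=0$ for $t\notin\msN$, because $|\nabla P|$ is continuous and positive on good levels. On $\Gamma$ we then use Lemma \ref{lemma_trace on good level}(i): $\bv_\Gamma$ equals $\bv|_\Gamma$, and $\partial_s\bv_\Gamma=(\nabla\bv)|_\Gamma\btau$ $\mcH^1$-a.e. Hence $\bv_\Gamma\wedge\partial_s\bv_\Gamma=(v_1\nabla v_2-v_2\nabla v_1)|_\Gamma\cdot\btau=|\nabla P|$ $\mcH^1$-a.e. on $\Gamma$, which is \eqref{eq:tangential-determinant}.

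For \eqref{eq:no-stagnation-good-level}, we argue by contradiction. Since $\bv_\Gamma\in H^1(\Gamma)$, it has a continuous representative. Suppose $\bv_\Gamma(s_0)=0$ at some arclength parameter $s_0$, possibly an endpoint of an arc, in which case we argue one-sidedly. By Lemma \ref{lemma_C1_approx}, $|\nabla P|$ is continuous and positive on the compact set $\Gamma$, so $|\nabla P|\ge c>0$ there. For $|s-s_0|\le\delta$, Cauchy–Schwarz gives
\[
|\bv_\Gamma(s)|\le\delta^{1/2}\,\varepsilon(\delta),\qquad \varepsilon(\delta):=\|\partial_s\bv_\Gamma\|_{L^2(s_0-\delta,s_0+\delta)}\to0 .
\]
Integrating \eqref{eq:tangential-determinant} over $(s_0-\delta,s_0+\delta)$ then yields
\[
2c\delta\le\int|\bv_\Gamma||\partial_s\bv_\Gamma|\,ds\le\delta^{1/2}\varepsilon(\delta)\cdot(2\delta)^{1/2}\varepsilon(\delta)=\sqrt2\,\delta\,\varepsilon(\delta)^2 .
\]
This is impossible for small $\delta$, so $|\bv_\Gamma|>0$ everywhere on $\Gamma$. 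Once the wedge reading is adopted, the only delicate point is the null-set bookkeeping in the second paragraph, and Lemma \ref{lemma_trace on good level} already handles it.
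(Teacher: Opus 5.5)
Your proof is correct and follows the paper's argument. You restrict the a.e. identity $\nabla^\perp P=v_1\nabla v_2-v_2\nabla v_1$ to the good level via Lemma \ref{lemma_trace on good level}, contract with $\btau$, and rule out a zero of $\bv_\Gamma$ using $|\bg(s)|\le|s|^{1/2}\|\bg'\|_{L^2}$; your integrated shrinking-interval version of this last step is equivalent to the paper's observation that $|\bg'(s)|^2\gtrsim 1/|s|$ is not integrable. You are also right that the product in \eqref{eq:tangential-determinant} must be read as the determinant $v_1\partial_s v_2-v_2\partial_s v_1$, which is exactly what the paper's computation $(v_1\nabla v_2-v_2\nabla v_1)\cdot\btau$ produces, and your explicit null-set bookkeeping fills in what the paper leaves as ``restricting this equation on $\Gamma$''.
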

\begin{proof}
The Euler system \eqref{Euler} implies that
\begin{equation}\label{eq:weak-rotated-pressure}
\nabla^\perp P=v_1\nabla v_2-v_2\nabla v_1.
\end{equation}
Restricting this equation on $\Gamma$ yields
\[
|\nabla P|=(v_1\nabla v_2-v_2\nabla v_1)\cdot\btau
=\bv_\Gamma\cdot\partial_s\bv_\Gamma.
\]
for $\mcH^1$-a.e. point of $\Gamma$. This proves \eqref{eq:tangential-determinant}.

Note that $|\nabla P|$ is continuous and positive on $\Gamma$. Denote by $c_\Gamma$ its minimum on $\Gamma$, which is a positive number. Suppose that $z\in\Gamma$ is a zero of $\bv_\Gamma$. Parametrize a sub-arc of $\Gamma$ by arclength $s\in(-\ell,\ell)$ with $z=\bga(0)$, and write $\bg(s)=\bv_\Gamma (\bga(s))$. Then $\bg\in H^{1}(-\ell,\ell)$ and
\[
|\bg(s)|=|\bg(s)-\bg(0)|\le \sqrt{s} \|\bg'\|_{L^2}.
\]
This, together with \eqref{eq:tangential-determinant}, implies that
\[
0<c_\Gamma\le |\bg(s)|\,|\bg'(s)|\le \sqrt{s} |\bg'(s)| \|\bg'\|_{L^2}
\]
This contradicts $\bg'\in L^{2}(-\ell,\ell)$, so \eqref{eq:no-stagnation-good-level} follows.
\end{proof}

The next two lemmas concern the lower bounds of the coarea integral
\[
\int_{P^{-1}(t)\cap\{|\bv|>0\}}\frac{|\nabla P|}{|\bv|^2}\,d\mcH^1.
\]

\begin{lemma}\label{lemma:coarea-int-loop}
Let $t\in\mbR\setminus\msN$. If $L=L(t)$ is a closed connected component of the level set $P^{-1}(t)$, then
\begin{align*}
\int_{L}\frac{|\nabla P|}{|\bv|^2}\,ds\ge 2\pi.
\end{align*}
\end{lemma}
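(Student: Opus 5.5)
The integral $\int_L |\nabla P|/|\bv|^2\,ds$ equals the phase increment of $\bv_L$ along the loop $L$. Since $L$ is a closed curve, that increment is $2\pi$ times a winding number, and I will show the winding number is a positive integer.

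\textbf{Setup on the loop.} By Lemma \ref{lemma_trace on good level}, for $t\notin\msN$ the loop $L$ is a $C^1$ Jordan curve with absolutely continuous tangent. The restriction $\bv_L$ lies in $H^1(L)$, so it is continuous, and by \eqref{eq:no-stagnation-good-level} of Lemma \ref{lemma:good-level-velocity} it never vanishes on $L$. Parametrize $L$ by arclength $s\in\mbR/\ell\mbZ$, oriented by $\btau=\nabla^\perp P/|\nabla P|$, and put $\bg(s)=\bv_L(\bga(s))$. Since $\bg$ is continuous, periodic and nonvanishing, it has a continuous lift $\bg=|\bg|(\cos\phi,\sin\phi)$ with $\phi\in H^1_{\rm loc}(\mbR)$. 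We also have
\[
\phi'=\frac{g_1g_2'-g_2g_1'}{|\bg|^2},\qquad \phi(\ell)-\phi(0)=2\pi\deg(\bg),
\]
where $\deg(\bg)\in\mbZ$.

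\textbf{Identifying the integrand.} Restrict \eqref{eq:weak-rotated-pressure} to $L$ and dot it with $\btau$. Together with part (i) of Lemma \ref{lemma_trace on good level}, this gives
\[
|\nabla P|=\nabla^\perp P\cdot\btau=(v_1\nabla v_2-v_2\nabla v_1)\cdot\btau=g_1g_2'-g_2g_1'
\]
$\mcH^1$-a.e. on $L$. Hence $\phi'=|\nabla P|/|\bv|^2>0$ a.e., and
\[
\int_L\frac{|\nabla P|}{|\bv|^2}\,ds=\phi(\ell)-\phi(0)=2\pi\deg(\bg).
\]
Because $\phi'>0$ a.e., this quantity is strictly positive. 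Being an integer multiple of $2\pi$, it is therefore at least $2\pi$.

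\textbf{Main obstacle.} The substantive analytic point is that $\bg$ has no zeros on $L$. Without this, the lift $\phi$ would not exist and the degree argument would fail. That fact is exactly \eqref{eq:no-stagnation-good-level}, which relies on $|\nabla P|\ge c_L>0$ on a good level, so here it can simply be invoked. What remains to check is that the pointwise identity $|\nabla P|=\bg\times\bg'$ holds with the chosen orientation of $\btau$. With $\btau=\nabla^\perp P/|\nabla P|$ we get $\nabla^\perp P\cdot\btau=|\nabla P|$ and $(v_1\nabla v_2-v_2\nabla v_1)\cdot\btau=g_1g_2'-g_2g_1'$, so the sign is consistent. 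In particular no geometric information about $L$ (for instance, whether it encloses a region) is needed: positivity of $\phi'$ forces the winding number to be positive.
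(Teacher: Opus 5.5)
Your proof is correct and follows the paper's argument: lift the continuous, nonvanishing $H^1$ trace $\bv_L$ along an arclength parametrization of the loop, use \eqref{eq:weak-rotated-pressure} with the orientation $\btau=\nabla^\perp P/|\nabla P|$ to get $\partial_s\phi=|\nabla P|/|\bv_L|^2>0$ a.e., and conclude that the total increment is $2\pi k$ with $k\ge 1$. Your expression $g_1g_2'-g_2g_1'$ is the correct form of the identity that the paper writes as $\bv_L\cdot\partial_s\bv_L$; read literally as a dot product, the paper's formula would give $\tfrac12\partial_s|\bv_L|^2$ rather than the phase derivative.
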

\begin{proof}
By Lemmas \ref{lemma_trace on good level} and \ref{lemma:good-level-velocity}, $\bv|_{L}\in H^{1}(\Gamma)$ and never vanishes. Therefore its direction admits a $H^{1}$ lifting (i.e., phase) $\theta$ along an arclength parametrization,
\[
 \frac{\bv}{|\bv|}=(\cos\theta,\sin\theta).
\]
With the orientation from Lemma \ref{lemma:good-level-velocity}, it is direct to compute that
\[
\partial_s\theta=\frac{\bv_L\cdot\partial_s\bv_L}{|\bv_L|^2} \qquad \text{a.e. on } L.
\]
Hence,
\[
\partial_s\theta
=\frac{|\nabla P|}{|\bv_L|^2}>0 \qquad \text{a.e. on } L.
\]
Since $L$ is a loop, the total increment of $\theta$ is $2\pi k$ for some integer $k\ge1$.  This proves the lemma.
\end{proof}

A connected component $W(t)$ of $P^{-1}(t)$ is said to be {\it wall-to-wall} if its two endpoints both lie on the physical boundary of the domain. An argument similar to that in Lemma \ref{lemma:coarea-int-loop} yields the following lemma.

\begin{lemma}\label{lemma:coarea-int-w2w}
Let $\Omega$ be $\Omega_\infty$ or $\mbR_+^2$. Let $t\in\mbR\setminus\msN$. If $W(t)$ is a wall-to-wall connected component of $P^{-1}(t)$, then
\begin{align*}
\int_{W(t)}\frac{|\nabla P|}{|\bv|^2}\,ds\ge \pi.
\end{align*}
\end{lemma}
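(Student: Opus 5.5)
The argument follows the proof of Lemma \ref{lemma:coarea-int-loop}, with the winding of a loop replaced by a boundary condition at the two endpoints. Work in a bounded Lipschitz subdomain containing $W(t)$, for instance a large rectangle or half-disc adjacent to the flat boundary, so that Lemmas \ref{lemma_trace on good level} and \ref{lemma:good-level-velocity} apply.

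\textbf{Parametrization and phase.} For $t\notin\msN$, $W(t)$ is a $C^1$ simple arc meeting $\partial\Omega$ transversally at its two endpoints $x^a,x^b$. Parametrize $W(t)$ by arclength $s\in(0,\ell)$ with the orientation $\btau=\nabla^\perp P/|\nabla P|$. By Lemma \ref{lemma_trace on good level}(i), $\bv_W\in H^1(0,\ell)$, so it is continuous up to the endpoints. By \eqref{eq:no-stagnation-good-level}, it does not vanish in the interior.

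The first step is to check nonvanishing at the endpoints too. The argument of Lemma \ref{lemma:good-level-velocity} works verbatim at an endpoint $s=0$, using a one-sided interval $(0,\ell')$. The bound $|\bg(s)|\le\sqrt{s}\,\|\bg'\|_{L^2}$ combined with $|\bg||\bg'|\ge c_W>0$ gives $|\bg'(s)|\ge c/\sqrt{s}$, which is not square integrable near $0$. So $|\bv_W|>0$ on the closed arc.

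Hence there is a continuous $H^1$ lifting $\theta$ on $[0,\ell]$ with $\bv_W/|\bv_W|=(\cos\theta,\sin\theta)$. As in Lemma \ref{lemma:coarea-int-loop},
\[
\partial_s\theta=\frac{|\nabla P|}{|\bv_W|^2}>0 \quad\text{a.e.},
\qquad\text{so}\qquad
\int_{W(t)}\frac{|\nabla P|}{|\bv|^2}\,ds=\theta(\ell)-\theta(0).
\]

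\textbf{Endpoint directions.} By Lemma \ref{lemma_trace on good level}(ii), the endpoint values of $\bv_W$ equal the boundary trace of $\bv$. By the slip condition \eqref{SBC}, $v_2=0$ there. Since the velocity is nonzero at the endpoints, it is horizontal, so $\theta(0),\theta(\ell)\in\pi\mbZ$. Because $\theta$ is strictly increasing, $\theta(\ell)-\theta(0)$ is a positive multiple of $\pi$, hence at least $\pi$. This gives the lemma.

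\textbf{Main obstacle.} The delicate point is justifying the endpoint identification. Part (ii) of Lemma \ref{lemma_trace on good level} requires $t\notin\msN$, where $\msN$ includes the null set $\msN_3$ coming from the boundary trace. The slip condition holds only $\mcH^1$-a.e. on $\partial\Omega$, so one must also ensure that the specific endpoints are points where the trace satisfies $v_2=0$. I would handle this with Lemma \ref{lemma_trace on good level}(iii): choose a representative of $\operatorname{Tr}_{\partial\Omega}\bv$ whose second component vanishes identically on the flat boundary. After enlarging $\msN$ by a null set, the endpoint values are independent of the representative, and so $v_2$ vanishes at the endpoints. In the strip, the two endpoints may lie on the same wall or on opposite walls; either case gives a horizontal direction, so the argument does not change.
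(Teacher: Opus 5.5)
Your proposal is correct and follows the same route as the paper. You lift the phase of the nonvanishing $H^1$ velocity trace along $W(t)$, use $\partial_s\theta=|\nabla P|/|\bv|^2>0$, and use the slip condition to place both endpoint phases in $\pi\mbZ$; beyond that, your one-sided version of the nonvanishing argument at the endpoints and your appeal to Lemma \ref{lemma_trace on good level}(iii) (choosing a boundary-trace representative with vanishing second component) spell out two points the paper's short proof leaves implicit.
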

\begin{proof}
By Lemma \ref{lemma:good-level-velocity}, the velocity has a continuous nonvanishing $H^1$ trace on the closed arc $W(t)$.  Its endpoint values agree with the boundary trace; by the slip condition both endpoint velocities are horizontal.  The phase $\theta$ is strictly increasing along the arc and its two endpoint values belong to $\pi\mbZ$ modulo $2\pi$.  Since the phase increment is positive, it is at least $\pi$.  The identity for $\partial_s\theta$ in the proof of Lemma \ref{lemma:coarea-int-loop} gives the desired lower bound.
\end{proof}

The next two lemmas will be needed for Proposition \ref{prop_pressure-asymptotics} in the whole plane case, and they can be easily adapted to other unbounded domains.

\begin{lemma}\label{lemma:oscP-annulus}
Suppose that $\Omega\subset\mbR^2$ is a connected bounded Lipschitz domain. Let $\bv\in H^1(\Omega)$ and $P\in H^1(\Omega)$ be a solution to \eqref{Euler}. Then it holds that
\begin{align*}
\osc_{\Omega}P\le \osc_{\partial\Omega}P+\frac{1}{2\pi}\mcT(\bv,\Omega).
\end{align*}
\end{lemma}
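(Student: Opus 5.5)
Set $m=\min_{\bar\Omega}P$ and $M=\max_{\bar\Omega}P$. Both exist because $P\in W^{2,1}(\Omega)$ (Lemma \ref{lemma_pressure in W21}, applied after cutting off and extending $\bv$ to $H^1(\mbR^2)$) is continuous up to $\bar\Omega$. Likewise set $m_\partial=\min_{\partial\Omega}P$ and $M_\partial=\max_{\partial\Omega}P$.

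If $\osc_\Omega P\le\osc_{\partial\Omega}P$ there is nothing to prove. Otherwise the set
\[
I:=(m,M)\setminus[m_\partial,M_\partial]
\]
has one-dimensional measure at least $\osc_\Omega P-\osc_{\partial\Omega}P$. It therefore suffices to show that for every $t\in I\setminus\msN$ the inner coarea integral in \eqref{eq:weak-coarea-pressure} is at least $2\pi$. Integrating over $t\in I$ via Lemma \ref{lemma:int_levelP} then gives
\[
\mcT(\bv,\Omega)\ge 2\pi\bigl(\osc_\Omega P-\osc_{\partial\Omega}P\bigr),
\]
which is the claim. Lemma \ref{lemma:int_levelP} only needs $\bv,P\in H^1(\Omega)$, and if $\mcT=\infty$ the inequality is trivial.

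Fix $t\in I\setminus\msN$ and assume without loss of generality $t>M_\partial$ (the case $t<m_\partial$ is symmetric). Then $P^{-1}(t)$ does not meet $\partial\Omega$. By Lemma \ref{lemma_BKK_Sard} (the set $\msN$ contains $\msN_0$) it is a finite union of $C^1$ loops, possibly empty. We show it is nonempty. The superlevel set $\{P>t\}$ is a nonempty open set, since $t<M$. It is compactly contained in $\Omega$, since $P\le M_\partial<t$ near $\partial\Omega$ by continuity. Choose any point $x_0\in\{P>t\}$ and any path inside the connected domain $\Omega$ from $x_0$ to a point near $\partial\Omega$ where $P<t$. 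By the intermediate value theorem this path hits $P^{-1}(t)$. Hence $P^{-1}(t)$ contains at least one closed component $L$. Since $\{|\bv|>0\}\supset L$ by \eqref{eq:no-stagnation-good-level}, Lemma \ref{lemma:coarea-int-loop} gives
\[
\int_{P^{-1}(t)\cap\{|\bv|>0\}}\frac{|\nabla P|}{|\bv|^2}\,d\mcH^1\ge\int_L\frac{|\nabla P|}{|\bv|^2}\,ds\ge 2\pi.
\]

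The main technical point is the consistency of representatives. Lemmas \ref{lemma_trace on good level}--\ref{lemma:coarea-int-loop} are stated for the precise/continuous representatives on good levels, and the coarea formula in Lemma \ref{lemma:int_levelP} uses the same continuous representative of $P$. Within $\Omega$ the set where $|\nabla P|=0$ contributes nothing to either side. Also, $\osc_{\partial\Omega}P$ is taken for the continuous trace, which coincides with the restriction of the continuous $P$ to $\partial\Omega$. The remaining steps are routine.
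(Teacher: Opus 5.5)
Your proof is correct and is essentially the paper's argument. For a.e.\ $t$ outside $[\min_{\partial\Omega}P,\max_{\partial\Omega}P]$, the level set $P^{-1}(t)$ is a nonempty finite union of loops in $\Omega$, each loop contributes at least $2\pi$ by Lemma~\ref{lemma:coarea-int-loop}, and integrating over these $t$ via Lemma~\ref{lemma:int_levelP} gives the bound. Your use of the level-set structure from Lemma~\ref{lemma_BKK_Sard} and your explicit nonemptiness check are, if anything, more careful than the paper's appeal to Morse--Sard and $C^2$ curves.

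One caveat concerns your claim that $P\in W^{2,1}(\Omega)\cap C(\Bar{\Omega})$ follows from Lemma~\ref{lemma_pressure in W21} after extending $\bv$. That argument only gives interior regularity, and with merely $\bv,P\in H^1(\Omega)$ the pressure need not even be bounded near $\partial\Omega$. As in the paper, this regularity has to be read as an implicit hypothesis; it does hold in the intended application to annuli $A_{m,n}\Subset\mbR^2$.
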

\begin{proof}
It is clear that
\[
\min_{\Bar{\Omega}}P \le \min_{\partial\Omega}P \le \max_{\partial\Omega}P \le \max_{\Bar{\Omega}}P.
\]
If $t<\min_{\partial\Omega}P$ or $t>\max_{\partial\Omega}P$, then $P^{-1}(t)\Subset\Omega$. 
Let $\msR(P)$ denote the set of regular values of $P$.
Then for any
\[
t\in \msR(P)\cap \left( (\min_{\Bar{\Omega}}P,\min_{\partial\Omega}P)\cup (\max_{\partial\Omega}P,\max_{\Bar{\Omega}}P) \right),
\]
$P^{-1}(t)$ consists of finitely many $C^2$ Jordan curves in $\Omega$. 
By the Morse-Sard theorem, $\msR(P)$ is a full-measure subset of the range of $P$.
Now it follows from Lemmas \ref{lemma:int_levelP} and \ref{lemma:coarea-int-loop} that
\begin{align*}
\mcT(\bv,\Omega)
=&\int_{\msR(P)}
\left(
\int_{P^{-1}(t)}\left|\nabla\left(\frac{\bv}{|\bv|} \right)\right|\,ds
\right)dt\\
\ge& 2\pi (\min_{\partial\Omega}P-\min_{\Bar{\Omega}}P+ \max_{\Bar{\Omega}}P-\max_{\partial\Omega}P)\\
=& 2\pi (\osc_{\Omega}P-\osc_{\partial\Omega}P).
\end{align*}
The desired result directly follows.
\end{proof}

\begin{lemma}\label{lemma:distP}
Suppose that $\Omega\subset\mbR^2$ is a bounded doubly connected Lipschitz domain whose boundary consists of two Jordan curves $\Gamma_1$ and $\Gamma_2$. Let $\bv\in H^1(\Omega)$ and $P\in H^1(\Omega)$ be a solution to \eqref{Euler}. Then it holds that  
\begin{align*}
\dist(P(\Gamma_1),P(\Gamma_2))\le\frac{1}{2\pi}\mcT(\bv,\Omega).
\end{align*}
\end{lemma}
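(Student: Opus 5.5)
Without loss of generality assume $\max_{\Gamma_1}P<\min_{\Gamma_2}P$; otherwise either the reverse ordering holds, handled symmetrically, or the images $P(\Gamma_1)$ and $P(\Gamma_2)$ overlap, the distance is zero and there is nothing to prove. By Lemma~\ref{lemma_pressure in W21}, $P$ is continuous on $\Bar{\Omega}$, and each $P(\Gamma_i)$ is a compact interval. Set $a=\max_{\Gamma_1}P$ and $b=\min_{\Gamma_2}P$, so that $\dist(P(\Gamma_1),P(\Gamma_2))=b-a$. The plan is to follow the proof of Lemma~\ref{lemma:oscP-annulus}: for almost every $t\in(a,b)$ we show that $P^{-1}(t)$ contains a closed loop, and then integrate the bound $2\pi$ from Lemma~\ref{lemma:coarea-int-loop} using the coarea identity of Lemma~\ref{lemma:int_levelP}.

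\textbf{Topological step.} Fix $t\in(a,b)\setminus\msN$, where $\msN$ is the null set of Lemma~\ref{lemma_trace on good level}. Since $P<t$ on $\Gamma_1$ and $P>t$ on $\Gamma_2$, the level set $P^{-1}(t)$ does not meet $\partial\Omega$. By Lemma~\ref{lemma_BKK_Sard}, it is therefore a finite disjoint union of $C^1$ Jordan loops compactly contained in $\Omega$. Consider the open set $U=\{x\in\Bar\Omega: P(x)<t\}$, which contains $\Gamma_1$, and let $U_1$ be its connected component containing $\Gamma_1$. This component cannot meet $\Gamma_2$, and its relative boundary in $\Omega$ lies in $P^{-1}(t)$. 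If $P^{-1}(t)$ were empty, $\Bar\Omega$ would split into the disjoint nonempty relatively open sets $\{P<t\}$ and $\{P>t\}$, contradicting the connectedness of $\Omega$. So $P^{-1}(t)$ is nonempty. Moreover, because $\Omega$ is doubly connected, at least one of the loops must separate $\Gamma_1$ from $\Gamma_2$. The argument: any path in $\Bar\Omega$ from $\Gamma_1$ to $\Gamma_2$ crosses $P^{-1}(t)$, so the union of the finitely many loops separates $\Gamma_1$ from $\Gamma_2$, and by a Jordan curve argument a single loop already does. For the lower bound, however, we only need the existence of one closed component, and that existence is already guaranteed.

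\textbf{Integration.} Each $t$ in the full-measure subset of $(a,b)$ selected above has at least one loop component $L(t)$ of $P^{-1}(t)$. Lemma~\ref{lemma:coarea-int-loop} gives $\int_{L(t)}|\nabla P|/|\bv|^2\,ds\ge2\pi$. Lemma~\ref{lemma:good-level-velocity} ensures that $|\bv|>0$ on good levels, so the restriction to $\{|\bv|>0\}$ in the coarea formula costs nothing. Lemma~\ref{lemma:int_levelP} then yields
\[
\mcT(\bv,\Omega)\ge\int_a^b\int_{L(t)}\frac{|\nabla P|}{|\bv|^2}\,ds\,dt\ge 2\pi(b-a),
\]
which is the claim.

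The main obstacle is the topological step. Its two delicate points are the rigorous separation statement for level sets of a merely $W^{2,1}$ function, and the compatibility of the $\mcL^1$-null set of Lemma~\ref{lemma_BKK_Sard} with the $\mcH^1$-null set of Lemma~\ref{lemma_trace on good level}, which is immediate in one dimension. Since only the existence of a closed component is needed, the connectedness argument above should suffice.
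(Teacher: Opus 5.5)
Your proposal is correct and follows the same route as the paper: assume $\max_{\Gamma_1}P<\min_{\Gamma_2}P$, note that for good $t$ in between, $P^{-1}(t)$ avoids $\partial\Omega$, is nonempty, and consists only of loops, and then integrate the $2\pi$ bound of Lemma \ref{lemma:coarea-int-loop} using Lemma \ref{lemma:int_levelP}. Your connectedness argument for nonemptiness and your appeal to Lemma \ref{lemma_BKK_Sard} (in place of the Morse--Sard and $C^2$ language used in Lemma \ref{lemma:oscP-annulus}) just spell out the step that the paper compresses into ``as in the proof of Lemma \ref{lemma:oscP-annulus}''.
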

\begin{proof}
If $P(\Gamma_1)\cap P(\Gamma_2)\neq \emptyset$, there is nothing to prove. Let us assume $\max_{\Gamma_1}P<\min_{\Gamma_2}P$ (the case $\max_{\Gamma_2}P<\min_{\Gamma_1}P$ can be handled similarly).
For every $t\in (\max_{\Gamma_1}P,\min_{\Gamma_2}P)$, $P^{-1}(t)\Subset\Omega$. As in the proof of Lemma \ref{lemma:oscP-annulus}, the desired inequality follows from Lemmas \ref{lemma:int_levelP} and \ref{lemma:coarea-int-loop}.
\end{proof}

\begin{lemma}\label{lemma:flat-boundary-osc}
Let $D$ be a rectangle $(a,b)\times(-1,1)$, and let $\Gamma_{\rm art}$ denote the two vertical boundaries. Suppose that $(\bv,P)$ is a weak solution in a neighborhood of $\overline D$ relative to the ambient strip and satisfies the slip condition on the physical wall. Then
\begin{equation}\label{eq:flat-osc}
 \osc_D P\le \osc_{\Gamma_{\rm art}}P+\frac1\pi\mcT(\bv,D).
\end{equation}
If $\Gamma_1$ and $\Gamma_2$ are the two vertical boundary components, then
\begin{equation}\label{eq:flat-dist}
 \dist(P(\Gamma_1),P(\Gamma_2))\le\frac1\pi\mcT(\bv,D).
\end{equation}
\end{lemma}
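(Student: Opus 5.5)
We argue exactly as in Lemmas \ref{lemma:oscP-annulus} and \ref{lemma:distP}, using the coarea representation of Lemma \ref{lemma:int_levelP} on the rectangle $D$. Now, however, level curves may also be wall-to-wall arcs, and these are controlled by Lemma \ref{lemma:coarea-int-w2w}. Since $(\bv,P)$ is a weak solution in a neighborhood of $\overline D$, we first apply Lemma \ref{lemma_reflection} to extend it evenly/oddly across the walls. By Lemma \ref{lemma_pressure in W21}, $P\in W^{2,1}$ near $\overline D$ and is continuous there. Lemmas \ref{lemma_BKK_Sard} and \ref{lemma_trace on good level} then apply on a slightly larger Lipschitz domain. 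For $t\notin\msN$, each component of $P^{-1}(t)\cap\overline D$ is therefore a $C^1$ loop or a simple arc with endpoints on $\partial D$.

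\emph{Oscillation bound.} Set $m=\min_{\Gamma_{\rm art}}P$ and $M=\max_{\Gamma_{\rm art}}P$, and take a good value $t\in(\min_{\overline D}P,m)\cup(M,\max_{\overline D}P)$. Then $P^{-1}(t)\cap\overline D$ is nonempty and avoids the artificial boundary $\Gamma_{\rm art}$. Each of its components is therefore either a closed loop in $\overline D$ or an arc whose two endpoints lie on the physical walls $\{x_2=\pm1\}$, i.e.\ a wall-to-wall component.
\begin{itemize}
\item A loop contributes at least $2\pi$ to the inner coarea integral by Lemma \ref{lemma:coarea-int-loop}.
\item A wall-to-wall arc contributes at least $\pi$ by Lemma \ref{lemma:coarea-int-w2w}. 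That proof only uses the slip condition at the endpoints, the trace identification of Lemma \ref{lemma_trace on good level}(ii), and the nonvanishing and monotone phase given by Lemma \ref{lemma:good-level-velocity}.
\end{itemize}
Hence the inner integral is at least $\pi$ for a.e.\ such $t$. Integrating over these $t$ gives
\[
\mcT(\bv,D)\ge\pi\bigl(\osc_D P-\osc_{\Gamma_{\rm art}}P\bigr),
\]
which is \eqref{eq:flat-osc}.

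\emph{Distance bound.} Assume $\max_{\Gamma_1}P<\min_{\Gamma_2}P$. For good $t$ strictly between these values, $P^{-1}(t)$ meets neither vertical side. By continuity and connectedness it is nonempty: the segment $\{x_2=0\}$ joins $\Gamma_1$ to $\Gamma_2$, so $P$ takes the value $t$ on it. The same lower bound $\pi$ for the inner integral then yields \eqref{eq:flat-dist}.

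\emph{Main obstacle.} The delicate point is the boundary behaviour. Level arcs may touch the walls, or run along them, and the corners of $D$ need care. I would argue that for a.e.\ $t$ the endpoints are transversal intersections with the walls, as guaranteed by Lemma \ref{lemma_BKK_Sard} applied to the reflected $P$. The trace of $\bv$ at those endpoints is horizontal, by the trace identity of Lemma \ref{lemma_trace on good level}(ii) and the slip condition \eqref{SBC}. Corners form a finite set of points and hence affect only finitely many $t$. Working with the reflected solution sidesteps issues with the boundary Lipschitz structure, since $\overline D$ then lies in the interior of the extended domain.
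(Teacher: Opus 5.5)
Your proposal is correct and follows essentially the same argument as the paper. Good levels outside $[\min_{\Gamma_{\rm art}}P,\max_{\Gamma_{\rm art}}P]$ (respectively, strictly between $\max_{\Gamma_1}P$ and $\min_{\Gamma_2}P$) avoid the vertical sides, so each component is a loop or a wall-to-wall arc contributing at least $\pi$ by Lemmas \ref{lemma:coarea-int-loop} and \ref{lemma:coarea-int-w2w}, and integrating in $t$ gives both bounds. The differences are minor: you get nonemptiness of the intermediate levels from the intermediate value theorem on the midline rather than from the paper's separation remark, and you spell out the boundary bookkeeping that the paper leaves implicit (reflection for $W^{2,1}$ regularity up to the walls, exclusion of corner values). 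For the endpoint structure and the horizontal endpoint trace, it is cleaner to apply Lemmas \ref{lemma_BKK_Sard} and \ref{lemma_trace on good level} on $D$ itself, using the reflected $P$ only for regularity, rather than on a larger domain where the walls are interior.
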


\begin{proof}
For \eqref{eq:flat-osc}, consider good values below $\min_{\Gamma_{\rm art}}P$ or above $\max_{\Gamma_{\rm art}}P$.  Such a level cannot meet the artificial boundary.  Every one of its components is therefore either an interior cycle or an arc whose endpoints lie on the physical wall.  By Lemmas \ref{lemma:coarea-int-loop} and \ref{lemma:coarea-int-w2w}, every nonempty such level contributes at least $\pi$ to the coarea integral.  Integrating over the two excess intervals gives \eqref{eq:flat-osc}.

For \eqref{eq:flat-dist}, suppose for example that $\max_{\Gamma_1}P<\min_{\Gamma_2}P$.  For almost every value between these two numbers, the full level set separates $\Gamma_1$ from $\Gamma_2$ and does not meet either artificial boundary.  Hence it has at least one cycle or physical-wall-to-physical-wall arc, and its coarea contribution is at least $\pi$.  Integration over the interval $(\max_{\Gamma_1}P,\min_{\Gamma_2}P)$ proves \eqref{eq:flat-dist}.  The opposite ordering is identical.
\end{proof}

We are now in a position to prove Proposition \ref{prop_pressure-asymptotics}.

\begin{proof}[Proof of Proposition \ref{prop_pressure-asymptotics}]
(\Rn{1}) Since $\bv$ is a bounded flow with finite total curvature, we see that $\nabla P\in L^2(\mbR^2)$.
By Lemma \ref{lemma_good radii}, there exists an increasing sequence of good radii $r_n\to \infty$ such that $|P(\partial B_{r_n})|$, the length of the interval $P(\partial B_{r_n})$, converges to zero.
For $m>n$, denote
\[
A_{m,n}=B_{r_m}\setminus\overline{B_{r_n}}.
\]
By Lemmas \ref{lemma:oscP-annulus} and \ref{lemma:distP}, we get, respectively,
\[
\osc_{A_{m,n}}P
\le \osc_{\partial A_{m,n}}P+\frac{1}{2\pi}\mcT(\bv,A_{m,n}),
\]
and
\[
\dist(P(\partial B_{r_n}),P(\partial B_{r_m}))\le \frac{1}{2\pi}\mcT(\bv,A_{m,n}).
\]
These, together with the elementary inequality
\[
\osc_{\partial A_{m,n}}P\le |P(\partial B_{r_n})|+|P(\partial B_{r_m})|+\dist(P(\partial B_{r_n}),P(\partial B_{r_m})),
\]
imply that
\[
\osc_{A_{m,n}}P\le |P(\partial B_{r_n})|+|P(\partial B_{r_m})|+\frac{1}{\pi}\mcT(\bv,A_{m,n}).
\]
Letting $m\to\infty$ and then $n\to\infty$ not only proves that $P$ is bounded, but also that
\[
\lim_{|x|\to\infty}P(x)
\]
exists uniformly in the exterior region.

(\Rn{2}) Let $\partial^+ B_r=\partial B_r\cap \mbR^2_+$. 
Since $\nabla P\in L^2(\mbR^2_+)$, there exists an increasing sequence $r_n\to\infty$ such that
\[
|P(\partial^+ B_{r_n})|\to 0.
\]
For $m>n$, denote
\[
A^+_{m,n}=(B_{r_m}\setminus\overline{B_{r_n}})\cap\mbR^2_+.
\]
With the aid of Lemmas \ref{lemma:int_levelP}–\ref{lemma:coarea-int-w2w} and the slip boundary conditions, adapting the proofs of Lemmas \ref{lemma:oscP-annulus} and \ref{lemma:distP} yields
\[
\osc_{A^+_{m,n}}P\le \osc_{\partial^+ B_{r_m} \cup \partial^+ B_{r_n}} P+\frac{1}{\pi}\mcT(\bv,A^+_{m,n}),
\]
and
\[
\dist(P(\partial^+ B_{r_m}),P(\partial^+ B_{r_n})) \le \frac{1}{\pi}\mcT(\bv,A^+_{m,n}).
\]
Consequently,
\[
\osc_{A^+_{m,n}}P\le |P(\partial^+ B_{r_m})|+|P(\partial^+ B_{r_n})|+ \frac{2}{\pi}\mcT(\bv,A^+_{m,n}).
\]
Letting $m\to\infty$ and then $n\to\infty$ finishes the proof.

(\Rn{3}) Define
\[
I_{x_1}:=\{x_1\}\times[-1,1], \quad x_1\in\mbR,
\]
and for $a<b$,
\[
Q_{a,b}:=(a,b)\times(-1,1).
\]
Again, adapting the proofs of Lemmas \ref{lemma:oscP-annulus} and \ref{lemma:distP} yields
\begin{equation*}
\osc_{Q_{a,b}}P\le
\osc_{I_a\cup I_b}P+\frac1\pi\mcT(\bv,Q_{a,b}),
\end{equation*}
and
\begin{equation*}
\dist(P(I_a),P(I_b))\le\frac1\pi\mcT(\bv,Q_{a,b}).
\end{equation*}
Consequently,
\begin{align}\label{oscP_strip}
\osc_{Q_{a,b}}P\le|P(I_a)|+|P(I_b)|+\frac2\pi\mcT(\bv,Q_{a,b}).    
\end{align}
Since $\nabla P\in L^2(\Omega_\infty)$, there exist sequences $a_n\to-\infty$ and $b_n\to+\infty$ such that
\begin{align}\label{boundaryP_strip}
|P(I_{a_n})|\to0, \quad |P(I_{b_n})|\to0.    
\end{align}
Combining \eqref{oscP_strip} and \eqref{boundaryP_strip} completes the proof.
\end{proof}


\section{Lower bounds via pressure oscillation}\label{sec_lower bound}

This section is dedicated to proving part (iii) of Theorem \ref{thm_FTC flow}. The proofs for the plane, half-plane, and infinitely long strip follow directly from the results established in Section \ref{sec_pressure asymptotics}. The periodic strip case, however, requires a more detailed argument.

Let 
\[
\msR(P)=(\inf P, \sup P)\setminus\msN.
\]
By Lemma \ref{lemma:int_levelP},
\begin{equation}\label{eq:T-good-levels}
\mcT(\bv,\Omega)
=\int_{\msR(P)}
\left(
\int_{P^{-1}(t)}\frac{|\nabla P|}{|\bv|^2}\,d\mcH^1
\right)dt.
\end{equation}

For $\Omega=\mbR^2$, Proposition \ref{prop_pressure-asymptotics} gives a far-field limit $P_\infty$. For every $t\in\msR(P)$ with $t\ne P_\infty$, all components of $P^{-1}(t)$ are compact and at least one is present. Lemma \ref{lemma:coarea-int-loop} therefore gives
\begin{equation}\label{plane-period}
\int_{P^{-1}(t)}\frac{|\nabla P|}{|\bv|^2}\,d\mcH^1\ge2\pi.
\end{equation}
Integrating \eqref{plane-period} in $t$ over the range of $P$ yields \eqref{ineq_sharp total curvature}.

For the half-plane and the infinite strip, Proposition \ref{prop_pressure-asymptotics} gives the corresponding far-field limit or limits. For every $t\in\msR(P)$ different from those exceptional end values, each component of $P^{-1}(t)$ is either a loop or a wall-to-wall arc. Lemmas \ref{lemma:coarea-int-loop} and \ref{lemma:coarea-int-w2w} give
\[
\int_{P^{-1}(t)}\frac{|\nabla P|}{|\bv|^2}\,d\mcH^1\ge\pi.
\]
Integrating in $t$ proves \eqref{ineq_sharp total curvature} for $\mbR^2_+$ and $\Omega_\infty$.

In the rest of this section, we consider the periodic strip case. It is enough to prove \eqref{plane-period} for every $t\in\msR(P)$.

We shall use the Bernoulli law at Sobolev regularity. Define the total head pressure
\[
H=P+\frac12|\bv|^2.
\]
Because $\bv\in H^1\cap L^\infty$, one has $H\in H^1$. We use the weak Bernoulli theorem in the precise form of \cite{KPR_ARMA_2013}: for a $H^{1}$ Euler flow, the total head pressure is constant $\mcH^1$-a.e. on every connected set on which a Sobolev stream function is constant. Thanks to Lemma \ref{lemma_reflection}, we can apply this result to a physical wall without invoking a boundary version of the theorem. The stream function is in $H^2$, and the wall segment is a connected subset of one of its level sets because of the slip boundary condition. Consequently,
\begin{equation}\label{eq:weak-Bernoulli-wall}
P+\frac12v_1^2=C_\pm
\qquad\mcH^1\text{-a.e. on }\mbT\times\{\pm1\},
\end{equation}
for suitable constants $C_\pm$. Since $P$ has a continuous trace, $C_\pm-P\ge0$ everywhere on the corresponding wall and \eqref{eq:weak-Bernoulli-wall} yields the continuous representative of the boundary speed
\begin{equation}\label{eq:boundary-speed-from-P}
|v_1|=\sqrt{2(C_\pm-P)}.
\end{equation}
By Lemma \ref{lemma:good-level-velocity}, after deleting one more null set of pressure values we may, and shall, assume that \eqref{eq:weak-Bernoulli-wall}--\eqref{eq:boundary-speed-from-P} hold at every endpoint of the good pressure level under consideration, with the endpoint velocity interpreted as the one-dimensional $H^{1}$ trace along that level.

The rest of the proof then splits into three cases.

{\bf Case 1:} $P^{-1}(t)$ contains a loop. Then \eqref{plane-period} immediately follows from Lemma \ref{lemma:coarea-int-loop}.

{\bf Case 2:} $P^{-1}(t)$ contains a wall-to-wall arc $W$, and the two endpoints of $W$ lie on different components of the physical boundary. 
Since $t$ lies strictly between the minimum and the maximum of $P$, the level set $P^{-1}(t)$ separates the nonempty sets
\[
\{P<t\}
\quad\text{and}\quad
\{P>t\}.
\]
However, a single arc $W$ terminating at different components of the boundary cannot separate the cylinder. Therefore, $P^{-1}(t)$ must contain at least one more component. 
Now, two components of $P^{-1}(t)$ contribute at least $\pi+\pi=2\pi$ to the coarea integral along $P^{-1}(t)$.
So \eqref{plane-period} still holds.

\begin{figure}[htbp]
\centering
\begin{tikzpicture}[
  x=1cm,
  y=1cm,
  line cap=round,
  line join=round,
  wall/.style={black, line width=0.85pt},
  hidden wall/.style={black, densely dashed, line width=0.75pt},
  pressure level/.style={black, line width=1.10pt},
  point/.style={circle, fill=black, inner sep=1.55pt}
]
\def\Rx{3.35}
\def\Ry{0.72}
\def\H{4}
\coordinate (A) at ({\Rx*cos(-135)},{\Ry*sin(-135)});
\coordinate (B) at ({\Rx*cos(-45)},{\Ry*sin(-45)});
\path[fill=black!7]
  (A)
  .. controls (-2.18,0.05) and (-1.68,0.82) .. (-0.92,0.55)
  .. controls (-0.45,0.38) and (-0.23,0.52) .. (0.03,0.82)
  .. controls (0.55,1.40) and (1.47,1.20) .. (1.83,0.66)
  .. controls (2.08,0.30) and (2.18,-0.08) .. (B)
  arc[start angle=-45,end angle=-135,x radius=\Rx,y radius=\Ry]
  -- cycle;
\draw[wall] (-\Rx,\H)
  arc[start angle=180,end angle=540,x radius=\Rx,y radius=\Ry];
\draw[wall] (-\Rx,0) -- (-\Rx,\H);
\draw[wall] ( \Rx,0) -- ( \Rx,\H);
\draw[hidden wall] (-\Rx,0)
  arc[start angle=180,end angle=0,x radius=\Rx,y radius=\Ry];
\draw[wall] (-\Rx,0)
  arc[start angle=180,end angle=360,x radius=\Rx,y radius=\Ry];
\draw[line width=1.25pt]
  (A) arc[start angle=-135,end angle=-45,x radius=\Rx,y radius=\Ry];
\draw[hidden wall,line width=1.05pt]
  (-\Rx,0) arc[start angle=180,end angle=0,x radius=\Rx,y radius=\Ry];
\draw[pressure level]
  (A)
  .. controls (-2.18,0.05) and (-1.68,0.82) .. (-0.92,0.55)
  .. controls (-0.45,0.38) and (-0.23,0.52) .. (0.03,0.82)
  .. controls (0.55,1.40) and (1.47,1.20) .. (1.83,0.66)
  .. controls (2.08,0.30) and (2.18,-0.08) .. (B);
\node[point] at (A) {};
\node[point] at (B) {};
\node[point] (Xone) at ({\Rx*cos(-84)},{\Ry*sin(-84)}) {};
\node[point] (Xtwo) at ({\Rx*cos(105)},{\Ry*sin(105)}) {};
\node[below left, xshift=-1pt, yshift=-2pt] at (A) {$A$};
\node[below right, xshift=1pt, yshift=-2pt] at (B) {$B$};
\node[below=5pt] at (Xone) {$X_1\in \gamma_1$};
\node[above=4pt] at (Xtwo) {$X_2\in \gamma_2$};

\node at (0.25,-0.05) {$M_1=\{P<t\}$};
\node at (-1.65,2.42) {$M_2=\{P>t\}$};
\node[above=2pt] at (1.20,1.10) {$W= P^{-1}(t)$};

\node[anchor=west] at (\Rx+0.18,\H+0.04)
  {$\mbT\times\{1\}$};
\node[anchor=west] at (\Rx+0.18,-0.08)
  {$\mbT\times\{-1\}$};

\end{tikzpicture}
\caption{The contradiction configuration in subcase 3.2. Under the assumption $P^{-1}(t)=W$, the same-wall arc $W$ separates the cylinder into $M_1=\{P<t\}$ and $M_2=\{P>t\}$. Its endpoints $A,B$ split the lower wall into $\gamma_1$ and $\gamma_2$; the opposite signs of $v_1(A)$ and $v_1(B)$ force stagnation points $X_i\in\gamma_i$, $i=1,2$.}
\label{fig:periodic-same-wall}
\end{figure}

{\bf Case 3:} $P^{-1}(t)$ contains a wall-to-wall arc $W$, and the two endpoints of $W$ lie on the same component of the physical boundary. 
Denote $A$ and $B$ the endpoints of $W$ and assume, without loss of generality, that $A,B\in\mbT\times\{-1\}$. 
Since $t$ is a regular value of $P$, the velocities are nonzero on both $A$ and $B$. The slip boundary condition implies that $\bv$ on $A$ and $B$ is horizontal. 
It remains to discuss two more subcases.

{\bf Subcase 3.1:} If $v_1(A)$ and $v_1(B)$ have the same sign, the strictly increasing phase along $W$ starts and ends at the same horizontal direction modulo $2\pi$. Then, the proof of Lemma \ref{lemma:coarea-int-loop} carries over verbatim and gives \eqref{plane-period}.

{\bf Subcase 3.2:} Assume now that $v_1(A)$ and $v_1(B)$ have different signs. 
We claim that $P^{-1}(t)$ must also contain at least one more component, so that \eqref{plane-period} holds. 
Suppose, to the contrary, that $P^{-1}(t)$ contains exactly one component, that is, $P^{-1}(t)=W$. 
Then $W$ separates $\mbT\times[-1,1]$ into two disjoint connected subsets, say $M_1=\{P<t\}$ and $M_2=\{P>t\}$. 
On the other hand, $A$ and $B$ divide the boundary component $\mbT\times\{-1\}$ into two disjoint arcs, denoted by $\gamma_1$ and $\gamma_2$. We may assume that $\gamma_i\subset M_i$, $i=1,2$.
Since $v_1(A)$ and $v_1(B)$ have different signs, on each $\gamma_i$ ($i=1,2$) there exists a stagnation point $X_i$ (See Figure \ref{fig:periodic-same-wall} for an illustration). By Bernoulli's law, there exists a constant $C$ such that
\[
P+\frac12 v_1^2=C \quad \text{on} \ \mbT\times\{-1 \}.
\]
Hence, both $X_1$ and $X_2$ are maximizers of $P$ restricted on $\mbT\times\{-1 \}$. In particular, it holds that $P(X_1)=P(X_2)>t$. This is a contradiction.

Thus, \eqref{plane-period} holds in all cases.
The proof of Theorem \ref{thm_FTC flow} is thereby concluded.


\section{Equality cases and examples}\label{sec_equality case}

In this final section, we provide examples for which \eqref{ineq_sharp total curvature} becomes equality.

In fact, the proof of the inequality also yields a criterion for equality. Recall that
\[
c_\Omega=
\begin{cases}
2\pi, &\text{if}\,\, \Omega=\mbR^2\text{ or }\Omega_{\rm per},\\
\pi, &\text{if}\,\,\Omega=\mbR^2_+\text{ or }\Omega_\infty.
\end{cases}
\]
Inequality \eqref{ineq_sharp total curvature} was obtained by inserting the pointwise-in-$t$ lower bound
\[
\int_{P^{-1}(t)}\left|\nabla\left(\frac{\bv}{|\bv|} \right)\right|\,ds \ge c_\Omega
\]
into the formula \eqref{coarea1}. 
Hence, the validity of the equality is equivalent to 
\begin{align*}
\int_{P^{-1}(t)}\left|\nabla\left(\frac{\bv}{|\bv|} \right)\right|\,ds = c_\Omega, \quad \hbox{a.e. } t.
\end{align*}
Equivalently, we have the following extremal scenarios:
\begin{enumerate}[label=(\roman*)]
\item in the plane, almost every level set of $P$ is a single loop along which the phase increment is $2\pi$;
 
\item in the half-plane or infinite strip, almost every level set of $P$ is a single wall-to-wall arc along which the phase increment is $\pi$;
 
\item in the periodic strip, the total phase increment is $2\pi$ on almost every level set of $P$.
This could be realized by a single loop of phase increment $2\pi$, by two wall-to-wall arcs of phase increment $\pi$, or by a single wall-to-wall arc of phase increment $2\pi$.
\end{enumerate}

\begin{example}[Compactly supported radial vortices]
Let
\[
    \bv(x)=V(|x|)\frac{x^\perp}{|x|},
\]
where $V$ is smooth and compactly supported with $V(0)=0$, and $x^\perp=(-x_2,x_1)$. Then $\bv$ is a steady Euler flow with pressure satisfying
\[
P'(r)=\frac{V(r)^2}{r}.
\]
Then all regular level sets of $P$ are concentric circles and the velocity direction winds once.
This example shows the sharpness of \eqref{ineq_sharp total curvature} in both the plane and the periodic strip.
\end{example}

\begin{example}[Saddle solutions to Allen--Cahn]
Let $u$ be the saddle solution to the Allen--Cahn equation $\Delta u=u^3-u$ in the plane, satisfying $x_1x_2 u>0$ whenever $x_1x_2\neq0$ (see \cite{DFP_ZAMP_1992}). Let
\[
\bv=\nabla^\perp u,\qquad
P=\frac14(1-u^2)^2-\frac12|\nabla u|^2.
\]
Since $u$ has Morse index 1 (see, e.g., \cite{WW_CPAM_2019}), $\bv$ is a steady Euler flow with finite total curvature.
Since $\bv$ has exactly one stagnation point at the origin, and a closed component of the pressure level must enclose a stagnation point (see Section \ref{sec_rigidity}), it follows that the regular pressure level curves are nested loops. 
Moreover, two distinct loops cannot share the same pressure value; otherwise, the maximum principle applied to \eqref{eq_elliptic P} would imply the existence of a stagnation point in the annular domain bounded by them.
Since the origin is the unique saddle point, the velocity winds exactly once along each regular pressure level. This example demonstrates the sharpness of \eqref{ineq_sharp total curvature} in the plane; the sharpness in the half-plane follows by restricting the solution.
\end{example}

\begin{example}[Infinite strips]
The heteroclinic constructions of De Regibus and Ruiz \cite{DR_CVPDE_2025}, and some of the least total curvature flows studied in \cite{GRXX_ARMA_2026}, provide nontrivial sharpness examples in $\Omega_\infty$. 
We refer the reader to \cite{DR_CVPDE_2025,GRXX_ARMA_2026} for the existence of a smooth solution $(\bv,P)$ to \eqref{Euler}-\eqref{SBC} with finite total curvature, for which $v_2>0$ in $\Omega_\infty$, $v_1(\cdot,1)$ is positive and increasing, and $v_1(\cdot,-1)$ is negative and decreasing.
Let $t$ be a regular value of $P$ other than the far-field limits of $P$. Since $\bv$ has no stagnation points, $P^{-1}(t)$ cannot contain any loops; it can only consist of wall-to-wall arcs. 
Using the Euler equations together with the signs and monotonicity of the boundary values of $v_1$, we have
\begin{align*}
\partial_{x_1}P(x_1,\pm 1)=-v_1\partial_{x_1}v_1(x_1,\pm 1)<0, \quad \hbox{for all } x_1\in \mbR.
\end{align*}
Thus, every regular level set $P^{-1}(t)$ consists of a single wall-to-wall arc whose endpoints lie on distinct boundary components of $\Omega_\infty$.
Finally, since $v_2>0$ in $\Omega_\infty$ and $v_1(\cdot,1)$ and $v_1(\cdot,-1)$ have different signs, the phase increment along each wall-to-wall arc is exactly $\pi$. This example demonstrates the sharpness of  \eqref{ineq_sharp total curvature} in the infinitely long strip.
\end{example}

\begin{example}[Cat's-eye flows]
Here we demonstrate the sharpness of \eqref{ineq_sharp total curvature} in the periodic strip by considering the famous cat's-eye Euler flows.
The streamline pattern for a general cat's-eye flow in $\mbT\times[-1,1]$ is given by Figure \ref{fig:cat-eye-flow}.

\begin{figure}[htbp]
\centering

\begin{tikzpicture}[x=0.63662cm,y=2.1cm]
\tikzset{
  flow/.style={
    line width=0.60pt,
    line cap=round,
    line join=round
  },
  wall/.style={
    line width=0.90pt,
    line cap=round
  },
  openflow/.style={
    flow,
    postaction={decorate},
    decoration={
      markings,
      mark=at position 0.52 with
        {\arrow{Stealth[length=2mm,width=1.5mm]}}
    }
  },
  sepflow/.style={
    flow,
    postaction={decorate},
    decoration={
      markings,
      mark=at position 0.54 with
        {\arrow{Stealth[length=2mm,width=1.5mm]}}
    }
  },
  closedflow/.style={
    flow,
    postaction={decorate},
    decoration={
      markings,
      mark=at position 0.18 with
        {\arrow{Stealth[length=2mm,width=1.5mm]}},
      mark=at position 0.68 with
        {\arrow{Stealth[length=2mm,width=1.5mm]}}
    }
  }
}
\foreach \X in {0,6.28319,12.56637}{
  \begin{scope}[shift={(\X,0)}]
    \draw[openflow]
      plot coordinates {
        (0.00000,0.79128) (0.26180,0.79257)
        (0.52360,0.79626) (0.78540,0.80187)
        (1.04720,0.80874) (1.30900,0.81618)
        (1.57080,0.82356) (1.83260,0.83038)
        (2.09440,0.83628) (2.35619,0.84104)
        (2.61799,0.84451) (2.87979,0.84661)
        (3.14159,0.84732) (3.40339,0.84661)
        (3.66519,0.84451) (3.92699,0.84104)
        (4.18879,0.83628) (4.45059,0.83038)
        (4.71239,0.82356) (4.97419,0.81618)
        (5.23599,0.80874) (5.49779,0.80187)
        (5.75959,0.79626) (6.02139,0.79257)
        (6.28319,0.79128)
      };
    \draw[openflow]
      plot coordinates {
        (0.00000,0.59548) (0.26180,0.59810)
        (0.52360,0.60556) (0.78540,0.61686)
        (1.04720,0.63062) (1.30900,0.64542)
        (1.57080,0.66001) (1.83260,0.67343)
        (2.09440,0.68501) (2.35619,0.69431)
        (2.61799,0.70108) (2.87979,0.70519)
        (3.14159,0.70656) (3.40339,0.70519)
        (3.66519,0.70108) (3.92699,0.69431)
        (4.18879,0.68501) (4.45059,0.67343)
        (4.71239,0.66001) (4.97419,0.64542)
        (5.23599,0.63062) (5.49779,0.61686)
        (5.75959,0.60556) (6.02139,0.59810)
        (6.28319,0.59548)
      };
    \draw[openflow]
      plot coordinates {
        (0.00000,0.38951) (0.26180,0.39432)
        (0.52360,0.40790) (0.78540,0.42799)
        (1.04720,0.45180) (1.30900,0.47674)
        (1.57080,0.50075) (1.83260,0.52241)
        (2.09440,0.54080) (2.35619,0.55540)
        (2.61799,0.56594) (2.87979,0.57230)
        (3.14159,0.57442) (3.40339,0.57230)
        (3.66519,0.56594) (3.92699,0.55540)
        (4.18879,0.54080) (4.45059,0.52241)
        (4.71239,0.50075) (4.97419,0.47674)
        (5.23599,0.45180) (5.49779,0.42799)
        (5.75959,0.40790) (6.02139,0.39432)
        (6.28319,0.38951)
      };

    \draw[openflow]
      plot coordinates {
        (6.28319,-0.38951) (6.02139,-0.39432)
        (5.75959,-0.40790) (5.49779,-0.42799)
        (5.23599,-0.45180) (4.97419,-0.47674)
        (4.71239,-0.50075) (4.45059,-0.52241)
        (4.18879,-0.54080) (3.92699,-0.55540)
        (3.66519,-0.56594) (3.40339,-0.57230)
        (3.14159,-0.57442) (2.87979,-0.57230)
        (2.61799,-0.56594) (2.35619,-0.55540)
        (2.09440,-0.54080) (1.83260,-0.52241)
        (1.57080,-0.50075) (1.30900,-0.47674)
        (1.04720,-0.45180) (0.78540,-0.42799)
        (0.52360,-0.40790) (0.26180,-0.39432)
        (0.00000,-0.38951)
      };

    \draw[openflow]
      plot coordinates {
        (6.28319,-0.59548) (6.02139,-0.59810)
        (5.75959,-0.60556) (5.49779,-0.61686)
        (5.23599,-0.63062) (4.97419,-0.64542)
        (4.71239,-0.66001) (4.45059,-0.67343)
        (4.18879,-0.68501) (3.92699,-0.69431)
        (3.66519,-0.70108) (3.40339,-0.70519)
        (3.14159,-0.70656) (2.87979,-0.70519)
        (2.61799,-0.70108) (2.35619,-0.69431)
        (2.09440,-0.68501) (1.83260,-0.67343)
        (1.57080,-0.66001) (1.30900,-0.64542)
        (1.04720,-0.63062) (0.78540,-0.61686)
        (0.52360,-0.60556) (0.26180,-0.59810)
        (0.00000,-0.59548)
      };

    \draw[openflow]
      plot coordinates {
        (6.28319,-0.79128) (6.02139,-0.79257)
        (5.75959,-0.79626) (5.49779,-0.80187)
        (5.23599,-0.80874) (4.97419,-0.81618)
        (4.71239,-0.82356) (4.45059,-0.83038)
        (4.18879,-0.83628) (3.92699,-0.84104)
        (3.66519,-0.84451) (3.40339,-0.84661)
        (3.14159,-0.84732) (2.87979,-0.84661)
        (2.61799,-0.84451) (2.35619,-0.84104)
        (2.09440,-0.83628) (1.83260,-0.83038)
        (1.57080,-0.82356) (1.30900,-0.81618)
        (1.04720,-0.80874) (0.78540,-0.80187)
        (0.52360,-0.79626) (0.26180,-0.79257)
        (0.00000,-0.79128)
      };

    \draw[sepflow]
      plot coordinates {
        (0.00000,0.00000) (0.19635,0.04872)
        (0.39270,0.09666) (0.58905,0.14306)
        (0.78540,0.18728) (0.98175,0.22876)
        (1.17810,0.26707) (1.37445,0.30189)
        (1.57080,0.33304) (1.76715,0.36041)
        (1.96350,0.38397) (2.15984,0.40374)
        (2.35619,0.41979) (2.55254,0.43218)
        (2.74889,0.44097) (2.94524,0.44622)
        (3.14159,0.44796) (3.33794,0.44622)
        (3.53429,0.44097) (3.73064,0.43218)
        (3.92699,0.41979) (4.12334,0.40374)
        (4.31969,0.38397) (4.51604,0.36041)
        (4.71239,0.33304) (4.90874,0.30189)
        (5.10509,0.26707) (5.30144,0.22876)
        (5.49779,0.18728) (5.69414,0.14306)
        (5.89049,0.09666) (6.08684,0.04872)
        (6.28319,0.00000)
      };

    \draw[sepflow]
      plot coordinates {
        (6.28319,0.00000) (6.08684,-0.04872)
        (5.89049,-0.09666) (5.69414,-0.14306)
        (5.49779,-0.18728) (5.30144,-0.22876)
        (5.10509,-0.26707) (4.90874,-0.30189)
        (4.71239,-0.33304) (4.51604,-0.36041)
        (4.31969,-0.38397) (4.12334,-0.40374)
        (3.92699,-0.41979) (3.73064,-0.43218)
        (3.53429,-0.44097) (3.33794,-0.44622)
        (3.14159,-0.44796) (2.94524,-0.44622)
        (2.74889,-0.44097) (2.55254,-0.43218)
        (2.35619,-0.41979) (2.15984,-0.40374)
        (1.96350,-0.38397) (1.76715,-0.36041)
        (1.57080,-0.33304) (1.37445,-0.30189)
        (1.17810,-0.26707) (0.98175,-0.22876)
        (0.78540,-0.18728) (0.58905,-0.14306)
        (0.39270,-0.09666) (0.19635,-0.04872)
        (0.00000,0.00000)
      };

    \draw[closedflow]
      plot coordinates {
        (1.08117,0.00000) (1.10654,0.05075)
        (1.18202,0.10173) (1.30575,0.15277)
        (1.47468,0.20301) (1.68466,0.25083)
        (1.93051,0.29410) (2.20618,0.33055)
        (2.50489,0.35812) (2.81927,0.37528)
        (3.14159,0.38110) (3.46391,0.37528)
        (3.77830,0.35812) (4.07700,0.33055)
        (4.35268,0.29410) (4.59853,0.25083)
        (4.80851,0.20301) (4.97744,0.15277)
        (5.10117,0.10173) (5.17664,0.05075)
        (5.20201,0.00000) (5.17664,-0.05075)
        (5.10117,-0.10173) (4.97744,-0.15277)
        (4.80851,-0.20301) (4.59853,-0.25083)
        (4.35268,-0.29410) (4.07700,-0.33055)
        (3.77830,-0.35812) (3.46391,-0.37528)
        (3.14159,-0.38110) (2.81927,-0.37528)
        (2.50489,-0.35812) (2.20618,-0.33055)
        (1.93051,-0.29410) (1.68466,-0.25083)
        (1.47468,-0.20301) (1.30575,-0.15277)
        (1.18202,-0.10173) (1.10654,-0.05075)
      } -- cycle;

    \draw[closedflow]
      plot coordinates {
        (1.70087,0.00000) (1.71861,0.04303)
        (1.77138,0.08539) (1.85790,0.12633)
        (1.97602,0.16496) (2.12285,0.20024)
        (2.29476,0.23107) (2.48752,0.25637)
        (2.69638,0.27519) (2.91621,0.28679)
        (3.14159,0.29071) (3.36697,0.28679)
        (3.58680,0.27519) (3.79567,0.25637)
        (3.98843,0.23107) (4.16034,0.20024)
        (4.30716,0.16496) (4.42529,0.12633)
        (4.51180,0.08539) (4.56458,0.04303)
        (4.58232,0.00000) (4.56458,-0.04303)
        (4.51180,-0.08539) (4.42529,-0.12633)
        (4.30716,-0.16496) (4.16034,-0.20024)
        (3.98843,-0.23107) (3.79567,-0.25637)
        (3.58680,-0.27519) (3.36697,-0.28679)
        (3.14159,-0.29071) (2.91621,-0.28679)
        (2.69638,-0.27519) (2.48752,-0.25637)
        (2.29476,-0.23107) (2.12285,-0.20024)
        (1.97602,-0.16496) (1.85790,-0.12633)
        (1.77138,-0.08539) (1.71861,-0.04303)
      } -- cycle;

    \draw[closedflow]
      plot coordinates {
        (2.25197,0.00000) (2.26293,0.02897)
        (2.29551,0.05729) (2.34894,0.08434)
        (2.42187,0.10947) (2.51254,0.13205)
        (2.61869,0.15149) (2.73771,0.16723)
        (2.86668,0.17883) (3.00243,0.18594)
        (3.14159,0.18833) (3.28076,0.18594)
        (3.41650,0.17883) (3.54547,0.16723)
        (3.66450,0.15149) (3.77065,0.13205)
        (3.86131,0.10947) (3.93425,0.08434)
        (3.98767,0.05729) (4.02026,0.02897)
        (4.03121,0.00000) (4.02026,-0.02897)
        (3.98767,-0.05729) (3.93425,-0.08434)
        (3.86131,-0.10947) (3.77065,-0.13205)
        (3.66450,-0.15149) (3.54547,-0.16723)
        (3.41650,-0.17883) (3.28076,-0.18594)
        (3.14159,-0.18833) (3.00243,-0.18594)
        (2.86668,-0.17883) (2.73771,-0.16723)
        (2.61869,-0.15149) (2.51254,-0.13205)
        (2.42187,-0.10947) (2.34894,-0.08434)
        (2.29551,-0.05729) (2.26293,-0.02897)
      } -- cycle;
    \fill (3.14159,0) circle[radius=1pt];

  \end{scope}
}
\draw[wall] (0, 1) -- (18.84956, 1);
\draw[wall] (0,-1) -- (18.84956,-1);
\end{tikzpicture}

\caption{Cat's-eye Euler flow in $\mbT\times[-1,1]$ with slip boundary conditions.}
\label{fig:cat-eye-flow}
\end{figure}

For the existence of cat's-eye flows near the Couette flow $(x_2,0)$ in $\mbT\times[-1,1]$, we refer to \cite{LZ_ARMA_2011}. We now present an explicit example of such a flow.
Let
\[
\psi=\frac{1}{\sqrt{\frac{\pi^2}{4}+1}}\cos\left(\sqrt{\frac{\pi^2}{4}+1} x_2\right)+\varepsilon\sin(x_1)\cos(\frac{\pi}{2}x_2).
\]
Then
\[
-\Delta\psi=\left( 1+\frac{\pi^2}{4} \right)\psi.
\]
So $(\bv,P)$ defined by
\begin{align*}
\bv=&\left(\sin\left(\sqrt{\frac{\pi^2}{4}+1} x_2\right)+\frac{\pi}{2}\varepsilon\sin(x_1)\sin(\frac{\pi}{2}x_2), \varepsilon\cos(x_1)\cos(\frac{\pi}{2}x_2)\right),\\
P=&-\frac12|\nabla\psi|^2-\frac12\left(1+\frac{\pi^2}{4}\right)\psi^2
\end{align*}
solve \eqref{Euler}-\eqref{SBC} in $\mbT\times[-1,1]$.
For sufficiently small $\varepsilon>0$, the vector field $\bv$ has a periodic cat's-eye structure, with exactly one cat's eye per periodic cell.
The level sets of $P$ are depicted in Figure \ref{fig:level_P}, where there are both loops and wall-to-wall arcs.

\begin{center}

\begin{figure}[htbp]
\begin{tikzpicture}[x=1.63cm,y=2.28cm,line cap=round,line join=round]
  \begin{scope}
    \draw[contour0,line width=0.58pt] (1.51048,-0.39907) -- (1.43706,-0.37602) -- (1.37412,-0.34075) -- (1.32111,-0.29730) -- (1.26922,-0.23699) -- (1.23191,-0.17375) -- (1.20592,-0.10425) -- (1.19342,-0.03475) -- (1.19418,0.04247) -- (1.20812,0.11197) -- (1.23776,0.18551) -- (1.27971,0.25112) -- (1.33216,0.30760) -- (1.39510,0.35426) -- (1.45803,0.38447) -- (1.53146,0.40239) -- (1.60489,0.40300) -- (1.67831,0.38635) -- (1.74125,0.35735) -- (1.80419,0.31225) -- (1.86196,0.25097) -- (1.90589,0.18147) -- (1.93346,0.11197) -- (1.94817,0.03475) -- (1.94740,-0.04247) -- (1.93112,-0.11969) -- (1.89859,-0.19532) -- (1.85587,-0.25869) -- (1.79370,-0.32108) -- (1.73076,-0.36321) -- (1.65733,-0.39291) -- (1.58391,-0.40457) -- (1.51048,-0.39907);
    \draw[contour1,line width=0.58pt] (1.55244,-0.77626) -- (1.51048,-0.76884) -- (1.45803,-0.74936) -- (1.36363,-0.69129) -- (1.24825,-0.58991) -- (1.14260,-0.46718) -- (1.06770,-0.35135) -- (1.01202,-0.22780) -- (0.98168,-0.11197) -- (0.97216,0.01158) -- (0.98601,0.13489) -- (1.02060,0.25097) -- (1.08041,0.37373) -- (1.16037,0.49035) -- (1.26922,0.61064) -- (1.38461,0.70635) -- (1.43706,0.73875) -- (1.47901,0.75841) -- (1.53146,0.77354) -- (1.57342,0.77703) -- (1.61538,0.77255) -- (1.66782,0.75630) -- (1.72027,0.72986) -- (1.77272,0.69517) -- (1.88810,0.59520) -- (1.99300,0.47516) -- (2.06959,0.35907) -- (2.12681,0.23552) -- (2.15993,0.11197) -- (2.16944,-0.01158) -- (2.15551,-0.13514) -- (2.11795,-0.25869) -- (2.05593,-0.38256) -- (1.97202,-0.50193) -- (1.91957,-0.56250) -- (1.85663,-0.62550) -- (1.79370,-0.67930) -- (1.74125,-0.71690) -- (1.68880,-0.74684) -- (1.64684,-0.76413) -- (1.59440,-0.77576) -- (1.55244,-0.77626);
    \draw[contour2,line width=0.58pt] (1.12528,-1.00000) -- (1.11597,-0.90734) -- (1.08349,-0.79923) -- (1.04256,-0.70656) -- (0.91587,-0.45174) -- (0.85525,-0.29730) -- (0.81903,-0.15058) -- (0.80632,-0.00386) -- (0.81776,0.14286) -- (0.85278,0.28958) -- (0.91258,0.44451) -- (1.04627,0.71429) -- (1.08645,0.80695) -- (1.11597,0.90734) -- (1.12528,1.00000);
    \draw[contour2,line width=0.58pt] (2.01631,1.00000) -- (2.02723,0.89961) -- (2.05811,0.79923) -- (2.09904,0.70656) -- (2.22377,0.45607) -- (2.28670,0.29623) -- (2.32256,0.15058) -- (2.33527,0.00386) -- (2.32382,-0.14286) -- (2.30825,-0.22008) -- (2.28670,-0.29623) -- (2.22377,-0.45607) -- (2.09904,-0.70656) -- (2.05811,-0.79923) -- (2.02723,-0.89961) -- (2.01631,-1.00000);
    \draw[contour3,line width=0.58pt] (0.89035,-1.00000) -- (0.88251,-0.89189) -- (0.85997,-0.78378) -- (0.82926,-0.68340) -- (0.73698,-0.42085) -- (0.69605,-0.27413) -- (0.67176,-0.13514) -- (0.66379,-0.00386) -- (0.67088,0.12741) -- (0.69430,0.26641) -- (0.73450,0.41313) -- (0.82926,0.68340) -- (0.85997,0.78378) -- (0.88251,0.89189) -- (0.89035,1.00000);
    \draw[contour3,line width=0.58pt] (2.25123,1.00000) -- (2.25908,0.89189) -- (2.28161,0.78378) -- (2.31232,0.68340) -- (2.40461,0.42085) -- (2.44555,0.27413) -- (2.46982,0.13514) -- (2.47780,0.00386) -- (2.47071,-0.12741) -- (2.44729,-0.26641) -- (2.40461,-0.42085) -- (2.31232,-0.68340) -- (2.28161,-0.78378) -- (2.25908,-0.89189) -- (2.25123,-1.00000);
    \draw[contour4,line width=0.58pt] (0.70743,-1.00000) -- (0.70170,-0.89189) -- (0.68359,-0.77606) -- (0.58786,-0.40541) -- (0.55692,-0.25869) -- (0.53976,-0.12741) -- (0.53412,-0.00386) -- (0.53976,0.12741) -- (0.55692,0.25869) -- (0.58786,0.40541) -- (0.68359,0.77606) -- (0.70170,0.89189) -- (0.70743,1.00000);
    \draw[contour4,line width=0.58pt] (2.43417,1.00000) -- (2.43989,0.89189) -- (2.45800,0.77606) -- (2.55373,0.40541) -- (2.58332,0.26641) -- (2.60114,0.13514) -- (2.60747,0.00386) -- (2.60184,-0.12741) -- (2.58467,-0.25869) -- (2.55373,-0.40541) -- (2.45800,-0.77606) -- (2.43989,-0.89189) -- (2.43417,-1.00000);
    \draw[contour5,line width=0.58pt] (0.54687,-1.00000) -- (0.54187,-0.88417) -- (0.52745,-0.76834) -- (0.45303,-0.39768) -- (0.43020,-0.25869) -- (0.41665,-0.12741) -- (0.41220,-0.00386) -- (0.41665,0.12741) -- (0.43020,0.25869) -- (0.45303,0.39768) -- (0.52745,0.76834) -- (0.54187,0.88417) -- (0.54687,1.00000);
    \draw[contour5,line width=0.58pt] (2.59472,1.00000) -- (2.59973,0.88417) -- (2.61414,0.76834) -- (2.68709,0.40541) -- (2.71139,0.25869) -- (2.72494,0.12741) -- (2.72940,0.00386) -- (2.72494,-0.12741) -- (2.71139,-0.25869) -- (2.68709,-0.40541) -- (2.61414,-0.76834) -- (2.59973,-0.88417) -- (2.59472,-1.00000);
    \draw[contour6,line width=0.58pt] (0.39797,-1.00000) -- (0.39417,-0.88417) -- (0.38320,-0.76834) -- (0.30878,-0.25869) -- (0.29835,-0.12741) -- (0.29491,-0.00386) -- (0.29794,0.11969) -- (0.30799,0.25097) -- (0.38320,0.76834) -- (0.39417,0.88417) -- (0.39797,1.00000);
    \draw[contour6,line width=0.58pt] (2.74362,1.00000) -- (2.74743,0.88417) -- (2.75839,0.76834) -- (2.83282,0.25869) -- (2.84325,0.12741) -- (2.84668,0.00386) -- (2.84365,-0.11969) -- (2.83361,-0.25097) -- (2.75839,-0.76834) -- (2.74743,-0.88417) -- (2.74362,-1.00000);
    \draw[contour7,line width=0.58pt] (0.25508,-1.00000) -- (0.24429,-0.76834) -- (0.19017,-0.25869) -- (0.18010,-0.00386) -- (0.18960,0.25097) -- (0.24429,0.76834) -- (0.25508,1.00000);
    \draw[contour7,line width=0.58pt] (2.88651,1.00000) -- (2.89730,0.76834) -- (2.95142,0.25869) -- (2.96149,0.00386) -- (2.95142,-0.25869) -- (2.89730,-0.76834) -- (2.88651,-1.00000);
    \draw[contour8,line width=0.58pt] (0.11440,-1.00000) -- (0.10776,-0.77606) -- (0.07244,-0.25869) -- (0.06601,-0.00386) -- (0.07244,0.25869) -- (0.10776,0.77606) -- (0.11440,1.00000);
    \draw[contour8,line width=0.58pt] (3.02719,1.00000) -- (3.03383,0.77606) -- (3.06915,0.25869) -- (3.07558,0.00386) -- (3.06915,-0.25869) -- (3.03383,-0.77606) -- (3.02719,-1.00000);
    \draw[contour9,line width=0.58pt] (6.25602,-1.00000) -- (6.23426,-0.00386) -- (6.25602,1.00000);
    \draw[contour9,line width=0.58pt] (3.16875,1.00000) -- (3.19052,0.00386) -- (3.16875,-1.00000);
    \draw[contour10,line width=0.58pt] (6.11048,-1.00000) -- (6.11686,-0.00386) -- (6.11048,1.00000);
    \draw[contour10,line width=0.58pt] (3.31429,1.00000) -- (3.30792,0.00386) -- (3.31429,-1.00000);
    \draw[contour11,line width=0.58pt] (5.95730,-1.00000) -- (5.96262,-0.76062) -- (5.99040,-0.24324) -- (5.99512,-0.00386) -- (5.99040,0.24324) -- (5.96262,0.76062) -- (5.95730,1.00000);
    \draw[contour11,line width=0.58pt] (3.46748,1.00000) -- (3.46215,0.76062) -- (3.43439,0.24324) -- (3.42965,0.00386) -- (3.43439,-0.24324) -- (3.46215,-0.76062) -- (3.46748,-1.00000);
    \draw[contour12,line width=0.58pt] (5.79122,-1.00000) -- (5.80170,-0.76834) -- (5.85699,-0.25097) -- (5.86665,-0.00386) -- (5.85699,0.25097) -- (5.80170,0.76834) -- (5.79122,1.00000);
    \draw[contour12,line width=0.58pt] (3.63356,1.00000) -- (3.62308,0.76834) -- (3.56778,0.25097) -- (3.55813,0.00386) -- (3.56778,-0.25097) -- (3.62308,-0.76834) -- (3.63356,-1.00000);
    \draw[contour13,line width=0.58pt] (5.60317,-1.00000) -- (5.60718,-0.89189) -- (5.62013,-0.77606) -- (5.68796,-0.41313) -- (5.71062,-0.26641) -- (5.72340,-0.13514) -- (5.72793,-0.00386) -- (5.72390,0.12741) -- (5.71159,0.25869) -- (5.68933,0.40541) -- (5.62013,0.77606) -- (5.60718,0.89189) -- (5.60317,1.00000);
    \draw[contour13,line width=0.58pt] (3.82161,1.00000) -- (3.81759,0.89189) -- (3.80465,0.77606) -- (3.73545,0.40541) -- (3.71327,0.25940) -- (3.70088,0.12741) -- (3.69685,0.00386) -- (3.70088,-0.12741) -- (3.71327,-0.25940) -- (3.73545,-0.40541) -- (3.80465,-0.77606) -- (3.81759,-0.89189) -- (3.82161,-1.00000);
    \draw[contour14,line width=0.58pt] (5.37260,-1.00000) -- (5.37963,-0.89189) -- (5.39999,-0.78378) -- (5.50697,-0.43086) -- (5.54385,-0.28185) -- (5.56547,-0.14286) -- (5.57309,-0.00386) -- (5.56627,0.13514) -- (5.54540,0.27413) -- (5.50762,0.42857) -- (5.39815,0.79151) -- (5.37865,0.89961) -- (5.37260,1.00000);
    \draw[contour14,line width=0.58pt] (4.05218,1.00000) -- (4.04516,0.89189) -- (4.02479,0.78378) -- (3.91717,0.42857) -- (3.88092,0.28185) -- (3.85930,0.14286) -- (3.85169,0.00386) -- (3.85850,-0.13514) -- (3.87937,-0.27413) -- (3.91717,-0.42857) -- (4.02479,-0.78378) -- (4.04516,-0.89189) -- (4.05218,-1.00000);
    \draw[contour15,line width=0.58pt] (5.01026,-1.00000) -- (5.01988,-0.92278) -- (5.05072,-0.83784) -- (5.09611,-0.75290) -- (5.24473,-0.50698) -- (5.28901,-0.42085) -- (5.32545,-0.33591) -- (5.35174,-0.25869) -- (5.37285,-0.17375) -- (5.38566,-0.08880) -- (5.39020,-0.00386) -- (5.38642,0.08108) -- (5.37579,0.15830) -- (5.35619,0.24324) -- (5.32839,0.32819) -- (5.29262,0.41313) -- (5.24960,0.49807) -- (5.09158,0.76062) -- (5.05072,0.83784) -- (5.01988,0.92278) -- (5.01026,1.00000);
    \draw[contour15,line width=0.58pt] (4.41450,1.00000) -- (4.40290,0.91506) -- (4.37041,0.83012) -- (4.32868,0.75290) -- (4.17480,0.49742) -- (4.09933,0.33591) -- (4.07077,0.25097) -- (4.05042,0.16602) -- (4.03844,0.08210) -- (4.03459,0.00386) -- (4.03834,-0.08108) -- (4.05042,-0.16602) -- (4.07077,-0.25097) -- (4.09933,-0.33591) -- (4.17480,-0.49742) -- (4.32868,-0.75290) -- (4.37041,-0.83012) -- (4.40290,-0.91506) -- (4.41450,-1.00000);
    \draw[contour16,line width=0.58pt] (4.67830,-0.47586) -- (4.76221,-0.47391) -- (4.84613,-0.45092) -- (4.91956,-0.41233) -- (4.99298,-0.35296) -- (5.05316,-0.28185) -- (5.09788,-0.20459) -- (5.12841,-0.11969) -- (5.14292,-0.02703) -- (5.13915,0.06564) -- (5.11932,0.15058) -- (5.08210,0.23552) -- (5.02970,0.31274) -- (4.96151,0.38135) -- (4.88809,0.43113) -- (4.81466,0.46206) -- (4.73075,0.47707) -- (4.64683,0.47122) -- (4.57340,0.44876) -- (4.49998,0.40884) -- (4.42655,0.34772) -- (4.36630,0.27413) -- (4.32342,0.19691) -- (4.29446,0.11197) -- (4.28185,0.02703) -- (4.28464,-0.05792) -- (4.30299,-0.14286) -- (4.33849,-0.22780) -- (4.38886,-0.30502) -- (4.44753,-0.36777) -- (4.52096,-0.42217) -- (4.59438,-0.45688) -- (4.67830,-0.47586);
  \end{scope}
  \draw[black,thick] (0,-1) rectangle (6.28319,1);
  \draw[black] (0.00000,-1) -- (0.00000,-1.035) node[below] {$0$};
  \draw[black] (3.14159,-1) -- (3.14159,-1.035) node[below] {$\pi$};
  \draw[black] (6.28319,-1) -- (6.28319,-1.035) node[below] {$2\pi$};
  \draw[black] (0,-1.00000) -- (-0.035,-1.00000) node[left] {$-1$};
  \draw[black] (0,0.00000) -- (-0.035,0.00000) node[left] {$0$};
  \draw[black] (0,1.00000) -- (-0.035,1.00000) node[left] {$1$};
  \node[anchor=west] at (6.55,1.04) {\scriptsize level $P$};
  \draw[contour0,line width=0.8pt] (6.55,0.88000) -- (6.80,0.88000) node[right,black] {\scriptsize $-0.689$};
  \draw[contour1,line width=0.8pt] (6.55,0.77000) -- (6.80,0.77000) node[right,black] {\scriptsize $-0.667$};
  \draw[contour2,line width=0.8pt] (6.55,0.66000) -- (6.80,0.66000) node[right,black] {\scriptsize $-0.646$};
  \draw[contour3,line width=0.8pt] (6.55,0.55000) -- (6.80,0.55000) node[right,black] {\scriptsize $-0.624$};
  \draw[contour4,line width=0.8pt] (6.55,0.44000) -- (6.80,0.44000) node[right,black] {\scriptsize $-0.603$};
  \draw[contour5,line width=0.8pt] (6.55,0.33000) -- (6.80,0.33000) node[right,black] {\scriptsize $-0.582$};
  \draw[contour6,line width=0.8pt] (6.55,0.22000) -- (6.80,0.22000) node[right,black] {\scriptsize $-0.560$};
  \draw[contour7,line width=0.8pt] (6.55,0.11000) -- (6.80,0.11000) node[right,black] {\scriptsize $-0.539$};
  \draw[contour8,line width=0.8pt] (6.55,0.00000) -- (6.80,0.00000) node[right,black] {\scriptsize $-0.517$};
  \draw[contour9,line width=0.8pt] (6.55,-0.11000) -- (6.80,-0.11000) node[right,black] {\scriptsize $-0.496$};
  \draw[contour10,line width=0.8pt] (6.55,-0.22000) -- (6.80,-0.22000) node[right,black] {\scriptsize $-0.475$};
  \draw[contour11,line width=0.8pt] (6.55,-0.33000) -- (6.80,-0.33000) node[right,black] {\scriptsize $-0.453$};
  \draw[contour12,line width=0.8pt] (6.55,-0.44000) -- (6.80,-0.44000) node[right,black] {\scriptsize $-0.432$};
  \draw[contour13,line width=0.8pt] (6.55,-0.55000) -- (6.80,-0.55000) node[right,black] {\scriptsize $-0.410$};
  \draw[contour14,line width=0.8pt] (6.55,-0.66000) -- (6.80,-0.66000) node[right,black] {\scriptsize $-0.389$};
  \draw[contour15,line width=0.8pt] (6.55,-0.77000) -- (6.80,-0.77000) node[right,black] {\scriptsize $-0.367$};
  \draw[contour16,line width=0.8pt] (6.55,-0.88000) -- (6.80,-0.88000) node[right,black] {\scriptsize $-0.346$};
\end{tikzpicture}
\caption{Level curves of $P$, $\varepsilon=0.1$.}
\label{fig:level_P}
\end{figure}
\end{center}

We consider cat's-eye flows with exactly one cat's eye in each period.
Compared to the previous examples, the level curves of the cat's-eye flow are more complex, which makes it more difficult to compute the total curvature using the coarea formula \eqref{coarea1}.
Nevertheless, the streamline pattern shown in Figure \ref{fig:cat-eye-flow} allows us to apply the coarea formula \eqref{coarea2} to compute the total curvature.

More precisely, we recall the equidistribution formula for the total curvature established in \cite{GXX_CMP_2026}:
\begin{equation}\label{equi-distribution}
\mcT(\bv,\Omega_{\rm per})=
2\pi \int_{\Sigma_y}|\nabla P|\,d\mcH^1, \quad \hbox{a.e. } y\in \mbS^1,
\end{equation}
where
\[
\Sigma_y:=\left\{x\in\Omega_{\rm per}: |\bv(x)|>0, \frac{\bv(x)}{|\bv(x)|}=y\right\}.
\]

Let $z_s$ and $z_e$ denote the saddle and elliptic stagnation points of the cat's-eye flow, respectively.
Here, both stagnation points are assumed to be nondegenerate.
Their local structures then ensure the existence of simply-connected neighborhoods $\mcN(z_s)$ and $\mcN(z_e)$ of $z_s$ and $z_e$, respectively, such that the following hold: (\Rn{1}) $\frac{\bv}{|\bv|}:\partial\mcN(z_s)\to\mbS^1$ and $\frac{\bv}{|\bv|}:\partial\mcN(z_e)\to\mbS^1$ are bijections; (\Rn{2}) for every direction $y\in\mbS^1$, $\Sigma_y\cap \mcN(z_s)$ is an arc ending at $z_s$ and at a point on $\partial\mcN(z_s)$, and $\Sigma_y\cap \mcN(z_e)$ is an arc ending at $z_e$ and at a point on $\partial\mcN(z_e)$.

Next, the Morse–Sard theorem implies that almost every $y\in S^1$ is a regular value of the direction map
\[
\frac{\bv}{|\bv|}: \Omega_{\rm per}\setminus(\mcN(z_s)\cup \mcN(z_e))\to \mbS^1.
\]
Let $y$ be a regular value of the above mapping and take any connected component $\sigma_y$ of $\Sigma_y$. Note that $\sigma_y$ is also a level curve of the phase function $\theta$ of $\bv$. The analysis in the proof of Lemma \ref{lemma:coarea-int-loop} then shows that, with $\sigma_y$ suitably oriented, the pressure $P$ is strictly increasing along $\sigma_y$, and that the tangent derivative of $P$ along $\sigma_y$ is precisely $|\nabla P|$. In particular, the monotonicity of $P$ implies that $\Sigma_y$ cannot contain any loops.

The above analysis, together with the slip boundary condition and the absence of stagnation points on the boundary, implies that for a.e. $y\in \mbS^1\setminus\{(1,0),(-1,0)\}$, $\Sigma_y\Subset\Omega_{\rm per}$ is a single arc connecting $z_s$ and $z_e$. Moreover, $\partial_s P=|\nabla P|>0$ on $\Sigma_y$, where the tangent derivative $\partial_s$ is defined with the unit tangent vector pointing from $z_e$ to $z_s$ as arc length increases.
In fact, from the Euler equations and the fact that $|\nabla P(z_s)|=|\nabla P(z_e)|=0$, we obtain 
\[
\nabla^2 P(z_s)=-(\nabla \bv(z_s))^2
=\det(\nabla\bv(z_s))I_{2}<0, \quad \nabla^2 P(z_e)>0,
\]
where $\nabla\bv$ denotes the matrix $(\partial_{x_i}v_j)_{ij}$. Consequently, $P$ attains a local maximum at $z_s$ and a local minimum at $z_e$.

We now get from \eqref{equi-distribution} that
\begin{align*}
\mcT(\bv,\Omega_{\rm per})=
2\pi \int_{\Sigma_y}\partial_s P \,d s
=2\pi (P(z_s)-P(z_e)).
\end{align*}
On the other hand, \eqref{ineq_sharp total curvature} tells that
\begin{align*}
\mcT(\bv,\Omega_{\rm per})\ge 
2\pi (\max P-\min P).
\end{align*}
Together, these imply that $P(z_s)=\max P$, $P(z_e)=\min P$, and hence
\begin{align*}
\mcT(\bv,\Omega_{\rm per})= 2\pi (\max P-\min P).
\end{align*}
\end{example}


\addtocontents{toc}{\protect\setcounter{tocdepth}{0}}
\section*{Acknowledgments}
\addtocontents{toc}{\protect\setcounter{tocdepth}{2}}
The research of  Gui is supported by NSFC Key Program (Grant No. 12531010), University of Macau research grants CPG2024-00016-FST, CPG2025-00032-FST, SRG2023-00011-FST, MYRG-GRG2023-00139-FST-UMDF, UMDF Professorial Fellowship of Mathematics, Macao SAR FDCT 0003/2023/RIA1 and Macao SAR FDCT 0024/2023/RIB1.
The research of  Li is supported by the National Natural Science Fund of China for Excellent Young Scholars (No. 12522109) and the National Science Fund of China General Program (No. 12471105).
The research of Xie is partially supported by NSFC grants 12571238 and 12426203.


\bibliographystyle{plain}

\end{document}